\newcommand{\ass}{\quad\mbox{as}\quad}
\newcommand{\bpsi}{ \tilde{\psi}  }
\newcommand{\bphi}{ \tilde{\phi}  }
\newcommand{\EE}{{\mathcal E}  }
\newcommand{\inn}{{\quad\hbox{in } }}
\newcommand{\onn}{{\quad\hbox{on } }}
\newcommand{\ttt}{\tilde }
\newcommand{\TT}{{\mathcal T}  }
\newcommand{\QQ}{{\mathcal Q}  }
\newcommand{\nn}{ {\nabla}  }
\newcommand{\pp}{ {\partial} }
\newcommand{\vp}{\varphi}
\newcommand{\OO}{{\mathcal O}}
\newcommand{\RR}{{{\mathcal R}}}
\newcommand{\R} {\mathbb R}
\newcommand{\Z} {\mathbb Z}
\newcommand{\cuad}{{\sqcap\kern-.68em\sqcup}}
\newcommand{\foral}{\quad\mbox{for all}\quad}
\newcommand{\ve}{\varepsilon}
\newcommand{\be}{\begin{equation}}
\newcommand{\ee}{\end{equation}}
\newcommand{\la}{\lambda}
\newcommand{\equ}[1]{(\ref{#1})}
\newtheorem{lemma}{Lemma}[section]
\newtheorem{prop}{Proposition}[section]
\newtheorem{theorem}{Theorem}
\newtheorem{corollary}{Corollary}[section]
\newtheorem{remark}{Remark}[section]
\newcommand{\bremark}{\begin{remark} \em}
\newcommand{\eremark}{\end{remark} }
\numberwithin{equation}{section}
\title[Vortex dynamics in Euler flows]
{Gluing methods for vortex dynamics in Euler flows}
\author[J. Davila]{Juan Davila}
\address{\noindent  Departamento de
Ingenier\'{\i}a  Matem\'atica-CMM   Universidad de Chile,
Santiago 837-0456, Chile}
\email{jdavila@dim.uchile.cl}
\author[M. del Pino]{Manuel del Pino}
\address{\noindent   Department of Mathematical Sciences University of Bath,
Bath BA2 7AY, United Kingdom \\
and  Departamento de
Ingenier\'{\i}a  Matem\'atica-CMM   Universidad de Chile,
Santiago 837-0456, Chile}
\email{m.delpino@bath.ac.uk}
\author[M. del Pino]{Monica Musso}
\address{\noindent   Department of Mathematical Sciences University of Bath,
Bath BA2 7AY, United Kingdom \\
and Departamento de Matem\'aticas, Universidad Cat\'olica de Chile, Macul 782-0436, Chile}
\email{m.musso@bath.ac.uk}
\author[J. Wei]{Juncheng Wei}
\address{\noindent  Department of Mathematics University of British Columbia, Vancouver, BC V6T 1Z2, Canada
}  \email{jcwei@math.ubc.ca}
\begin{document}

\begin{abstract}
A classical problem for the two-dimensional Euler flow for an incompressible fluid confined to a
smooth domain. is that of finding regular solutions with highly concentrated vorticities around $N$ moving
{\em vortices}. The  formal dynamic law for such objects was first derived in the 19th century by  Kirkhoff and Routh.  In this paper we devise a {\em gluing approach} for the construction of smooth $N$-vortex solutions. We capture in high precision the core of each vortex as a scaled finite mass solution
of Liouville's equation plus small, more regular terms. Gluing methods have been a powerful tool in geometric constructions by {\em desingularization}. We succeed in applying those ideas in this highly challenging setting.
\end{abstract}

\maketitle

\section{Introduction}

We consider the Euler equation
for an incompressible fluid confined to a smooth domain $\Omega\subset \R^2$  given by

\be\begin{cases} 	
{u}_t   +  ({ u }\cdot \nn ) {u } =    \nn p \ \,   \quad {\rm in }\  \Omega\times(0,T) \\
  \qquad\quad  {u }(\cdot,0) =  { u }_0  \, \ \quad \text{ in } \Omega\\
\qquad\quad\ \  { u }\cdot \nu  =   0  \quad\quad  \text{ on } \pp\Omega\times(0,T)\\
 \qquad\quad\     \nn \cdot {u }  =   0  \quad\quad  \text{ in } \Omega\times(0,T)
 \end{cases}
\label{10} \ee
Here $  { u}: \Omega\times [0,T)\to \R^2 $ designates the velocity field and
	$   p: \Omega\times [0,T)\to \R$ the pressure.  We assume in what follows that $\Omega$ is bounded and simply connected.
For a solution of \equ{10} its vorticity is defined as
$$\omega =  \nabla \times u =    \pp_{x_2} u_1 -  \pp_{x_1} u_2    .$$

\medskip
We have that
\equ{10} is equivalent to its vorticity-stream formulation
\be\begin{cases} 	
{\omega}_t   +   \nn^\perp \Psi \cdot \nn  \omega  =    0\ \,   \quad {\rm in }\  \Omega\times(0,T) \\
 \qquad\quad  {\omega }(\cdot,0) =  { \omega }_0  \, \ \quad \text{ in } \Omega\\
 \qquad\quad\     -\Delta  \Psi   =   \omega \quad\quad \text{ in } \Omega\times(0,T)\\
\qquad\quad\quad \quad     \Psi   =   0  \quad \quad \text{ on } \pp\Omega\times(0,T)
 \end{cases}
\label{11} \ee
For a solution $(\omega, \Psi)$ of \equ{11}, a solution of \equ{10} is recovered by means of the Biot-Savart law
$ u =  \nn^\perp \Psi$, where we denote  $(a,b)^\perp = (b,-a)$. The Poisson equation satisfied by the {\em stream function} $\Psi$ reads precisely as  $\omega =  \nabla \times u $.

\medskip
This paper deals with {\em vortex solutions} of Problem \equ{10}. Loosely speaking, a vortex solution of \equ{10} is one
for which its velocity field exhibits very fast rotation around one or more (time-dependent) points of the domain, in other words such that
its vorticity $\omega(x,t)$ appears highly concentrated around those points.
More precisely the issue is to consider a family of solutions of \equ{11} $(\omega_\ve,\Psi_\ve)$ dependent on a small concentration parameter
$\ve$ such that in distributional sense we have
\be\label{w*}
\omega_\ve (x,t)  \rightharpoonup 
 \sum_{j=1}^N 8\pi \kappa_j \delta (x-\xi_j(t)) \ass \ve\to  0
\ee
where $\delta(x)$ is the Dirac mass at the origin, $\xi_j(t)\in \Omega$ for all $t\in (0,T)$ and $\kappa_j$ are constants.
Let $G(x,\xi)$ be the Green function of the problem
$$
-\Delta G(x,\xi)  = 8\pi\delta (x-\xi)      \quad\hbox{in } \Omega, \quad G(x,\xi) =0 \quad\hbox{on }\pp\Omega
$$
so that we should also have the formal limit
\be\label{psi*}\Psi_\ve(x,t) \rightharpoonup 
\sum_{j=1}^N 8\pi \kappa_jG(x,\xi_j(t))  \ass \ve\to  0 .\ee
Since
\[
-\Delta \Gamma  =  8\pi \delta , \quad \Gamma (x) = 4\log \frac 1{|x|}
\]
we see that
\be \label{G}
G(x,\xi)  = \Gamma (x-\xi)  - H(x,\xi)
\ee
where $H(x,\xi)$ solves
$$
 -\Delta_x H(x,\xi)  =0   \quad\hbox{in } \Omega, \quad H(x,\xi) = \Gamma (x-\xi) \quad\hbox{on }\pp\Omega.
$$
A singular vortex solution of system \equ{11} is precisely one of the form $(\omega^s, \Psi^s)$ given by \equ{w*}-\equ{psi*}.
The time evolution of the vortices for such an object obeys a formal law that
can be derived as follows.
Letting
\be \label{op}\omega^s (x,t;\xi) =  \sum_{j=1}^N 8\pi \kappa_j \delta(x-\xi_j(t) ), \quad  \Psi^s(x,t,\xi) = \sum_{j=1}^N  \kappa_jG(x,\xi_j(t))
\ee
we find
$$
{\omega^s_t}  = - \sum_{j=1}^N 8\pi \kappa_j \nn \delta (x-\xi_j)\cdot \dot \xi_j,\quad
$$
$$
\nn^\perp \Psi^s \cdot \nn \omega^s =  \sum _{i,j=1}^N  8 \pi \, \kappa_i\kappa_j  \nn^\perp G(x,\xi_i) \cdot \nn \delta  (x-\xi_j)  .
$$
Since $   \Gamma(x)$ and $   \delta(x)$ are ``radially symmetric'', then $   \nn^\perp \Gamma (x-\xi_j)\cdot \nn \delta  (x-\xi_j)=0.$
Since $    \nn \delta  (x-\xi_j)$ is only supported at $ x=  \xi_j$, we get from \equ{G}
$$
 \nn^\perp G(x,\xi_j) \cdot \nn \delta  (x-\xi_j)  =
 - \nn^\perp  H(\xi_j,\xi_j)\cdot \nn \delta  (x-\xi_j) .
$$
Thus
$$
{\omega^s_t} +\nn^\perp \Psi^s \cdot \nn \omega^s    =
$$
$$
8\pi \, \sum_{j=1}^N [- \kappa_j \dot \xi_j  + \nn_x^\perp \big ( - \kappa_j^2 H(x,\xi_j) +  \sum_{i\ne j}\kappa_i\kappa_j  G(x,\xi_i )\big)  ] \cdot \nn \delta (x-\xi_j).
$$
So that $(\omega^s. \Psi^s)$
is a ``solution'' of Euler if and only if  $\xi= (\xi_1,\ldots , \xi_j)$ solves the ODE system
$$
\dot \xi_j (t)=    \nn^\perp_x ( - \kappa_j H(x,\xi_j(t)) +  \sum_{i\ne j}\kappa_i G(x,\xi_i(t) )  ) \big |_{x=\xi_j(t)}
$$
Equivalently, if and only  if $   \xi(t)= (\xi_1(t),\ldots , \xi_N(t))$ solves the Hamiltonian system
\be\label{sist}
\kappa_j\, \dot \xi_j(t)  =  \nn_{\xi_j}^\perp K(\xi(t)) , \quad j=1,\dots, N ,\quad t\in [0,T]
\ee
where
\be\label{K}
 K(\xi):=    -
 \frac 12 \sum_{i=1}^N \kappa_i^2 H(\xi_i,\xi_i) + \frac 12 \sum_{i \ne  j}\kappa_i \kappa_j G(\xi_i,\xi_j )
\ee
This is the {\em  Kirchoff-Routh function} \cite{cclin}. This dynamic law for the evolution of vortices traces back to the 19th century being first derived by Kirchoff and Routh
\cite{kirchoff,routh}.  System \equ{sist}, called {\em the $N$-vortex problem},  encodes interesting phenomena that have been the object of intensive recent investigation, see \cite{bartsch1,bartsch2,bartsch3} and their references.

\medskip
It is classical that the initial value problem for \equ{10} or \equ{11} is  well-posed for smooth initial data, we refer the reader for instance to the book \cite{majda} and references therein. In fact solutions are globally defined and smooth at all times.

\medskip
A classical problem  associated to the above derivation is the {\em  desingularized  $N$-vortex problem}, namely the existence of true smooth solutions of  Euler's equation \equ{11} with  highly concentrated vorticities around $N$ points $\xi_j(t)$  which obey a motion law  similar to \equ{sist}.

\medskip
Marchioro and Pulvirenti \cite{marchioropulvirenti} provided a first, important result for the  desingularized  $N$-vortex problem: Let  $\xi^0(t) = (\xi_{1}^0(t), \ldots, \xi_{N}^0(t))\in \Omega^N$ be a solution of system \equ{sist} with no collisions in $[0,T]$, namely
$$ \inf_{t\in [0,T]}  |\xi^0_i(t)-\xi^0_j(t)| >0 \foral i\ne j.$$    Then
there exists a smooth initial condition $\omega_{0\ve}(x)$, suitably $\ve$-concentrated around the points $\xi^0_j(0)$, such that the unique smooth solution
$(\omega_\ve,\Psi_\ve)$ of \equ{11} satisfies the convergence assertion \equ{w*}-\equ{psi*} for $\xi=\xi^0$ in the distributional sense.

\medskip
The proof in \cite{marchioropulvirenti} (done for simplicity in $\Omega=\R^2$) gives no clue on the behavior of the solution near the concentration cores, namely where the delicate behavior is happening.  Getting precise information is not easy because of the nearly singular character of the solution. Euler's equation is extremely sensitive to the regularity of the initial datum. Problem \equ{11} (in a suitable weak sense) is well posed for
initial conditions in $L^\infty(\Omega)$ \cite{yudovich}. Existence holds in some adequate measure spaces \cite{diperna}. See also
\cite{miot1,miot2} for related results.
The $H^s$-setting  is mysterious, see \cite{bourgain} and references therein. Euler's equation hides oscillatory behaviors that could completely spoil regularity and/or well-posedness.
For instance, it is known that nonuniqueness may arise in \equ{10} for continuous and even H\"older continuous settings.
Smooth solutions of \equ{10} are easily seen to preserve energy in the sense that
$$
E(t)= \int_\Omega |u( \cdot ,t)|^2 \ = \ constant.
$$
This is no longer the case, and continuous solutions with any prescribed energy $E(t)$ can be found, see \cite{delellis,schnirelmann}.
It is physically sound to obtain the asymptotic behavior of the energy density $$ e_\ve (t)= |u_\ve( \cdot ,t)|^2$$ for desingularized vortex solutions. That information does not follow from the rough distributional convergence \equ{w*}-\equ{psi*} or the method in \cite{marchioropulvirenti}.

\medskip
In this paper we revisit the $N$-vortex desingularization problem providing precise asymptotics of the desingularized solution $(\omega_\ve,\Psi_\ve)$ which
in particular yield the following information on the energy density:
\be \label{energy}
	\frac 1{|\log\ve|}| { u}_\ve(x,t) |^2  \rightharpoonup  \sum_{j=1}^N 8\pi \kappa_j^2\delta(x -\xi^0_j(t)),\quad u_\ve = \nn^\perp \Psi_\ve.
\ee
To state our result, we 
consider the  $\ve$-regularization of $(\omega^s,\Psi^s)$  in \equ{op} given by

\begin{align}
\Psi_0  (x,t;\xi,\ve)  &=   \sum_{j=1}^N \kappa_j\, \big [  \, \log \frac 1{( \ve^2 +|x-\xi_j(t)|^2 )^2}  -  H(x,\xi_j(t))\, \big ] ,\label{piko1} \\
\omega_0  (x,t;\xi,\ve)  &=   \sum_{j=1}^N {\kappa_j}
\frac {8\ve^{2}}{ (\ve^2 +  |x-\xi_j(t)|^2 )^2 }
\label{piko2}\end{align}
Consistently, we directly check that  $$  -\Delta \Psi_0  = \omega_0 , \qquad \int_{\R^2} \frac {8\ve^{2}dx}{ (\ve^2 +  |x-\xi_j|^2 )^2 }    = 8\pi, $$ hence $(\omega_0, \Psi_0 )$ regularize $(\omega^s, \Psi^s)$ in the sense that, as $\ve \to 0$,
\begin{align*}
\omega_0 (x,t;\xi,\ve) &\rightharpoonup \omega^s(x,t;\xi)= \sum_{j=1}^N 8\pi\kappa_j \delta(x -\xi_i(t)) ,\\
 \Psi_0 (x,t;\xi,\ve ) &\rightharpoonup   \Psi^s(x,t;\xi) = \sum_{j=1}^N \kappa_j G(x,\xi(t)) .
\end{align*}
In addition, we see that in agreement with \equ{energy} we have
$$
\frac 1{|\log\ve|}|\nn \Psi _0 |^2  \rightharpoonup  \sum_{j=1}^N 8\pi \kappa_j^2\delta(x -\xi_j(t)) \ass \ve\to  0 .
$$

\medskip In what follows we fix
$  \xi^0(t) = (\xi_1^0(t),\ldots, \xi_N^0(t))\in \Omega^N$, a smooth, collisionless solution of the $N$-vortex system $\equ{sist}$ in $[0,T]$, and
consider the functions $(\Psi_0 , \omega_0)$ defined by \equ{piko1}-\equ{piko2} relative to $\xi = \xi^0$.

\medskip
Our main result states the existence of a solution  $(\omega_\ve, \Psi_\ve)$ of \equ{11}  of the form
 \be
 \begin{cases}\ \ \quad  \Psi_\ve (x,t) =  \Psi_0(x,t ;\xi_0,\ve)  + \psi_\ve(x,t),  \\\  \ \quad \omega_\ve(x,t) =  \omega_0(x,t  ;\xi_0,\ve)  + \phi_{\ve}(x,t) , \\ -\Delta \psi_{\ve}(x,t)  =  \phi_{\ve}(x,t) \end{cases}\quad \inn \Omega\times [0,T].
 \label{formas}\ee
where a precise control on the $  \ve$-smallness of    $\psi_\ve$ and $  \phi_\ve$ can be obtained.

\begin{theorem} \label{teo1}
	  There exists a solution $  (\omega_\ve,\Psi_\ve)$ of system $\equ{11}$ of the form $\equ{formas}$
	  such that for  any arbitrarily small $\sigma>0$  we have the uniform estimates
$$
 \begin{aligned}
	& |\phi_\ve(x,t)|\  \le  \ \ve^\sigma \omega_0 (x,t;\xi^0,\ve)   
\\ &|\psi_\ve(x,t)|\ +\ \ve \, |\nn \psi_\ve(x,t)| \ \le  \ C\, \ve^2 \, .\,
	  \end{aligned}	
\foral 	(x,t)\in \Omega \times [0,T] $$
\end{theorem}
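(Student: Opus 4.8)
\medskip
\noindent\textbf{Sketch of the argument.}
The construction is an inner--outer gluing scheme adapted to the transport character of the vorticity equation \equ{11}.

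\textbf{Step 1 (error of the ansatz and the remainder equation).} Inserting $(\omega_0,\Psi_0)$ into \equ{11} produces the error $\mathcal E_0:=\pp_t\omega_0+\nn^\perp\Psi_0\cdot\nn\omega_0$. In the blown-up variable $x=\xi^0_j(t)+\ve y$ near the $j$-th core, $\omega_0\approx\ve^{-2}\kappa_j W(y)$ with $W(y)=8(1+|y|^2)^{-2}=e^{U(y)}$, $U=\log W$, and one notices that $\ve^{-2}\kappa_j W$ with stream function $\kappa_j\log W$ is an \emph{exact} steady vortex of Euler, being of the form $\omega=\kappa_j e^{\psi/\kappa_j}$. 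Hence the non-steadiness of $\omega_0$ comes only from the motion of the centers and from the regular part $H$ together with the mutual interactions; the contributions that are of leading order in the inner scale cancel precisely because $\xi^0$ solves the Kirchhoff--Routh system \equ{sist}. After a subleading correction of each core position one obtains $|\mathcal E_0|\le\ve^{1-\sigma}\,\omega_0$ for every $\sigma>0$. Writing $\Psi_\ve=\Psi_0+\psi$, $\omega_\ve=\omega_0+\phi$ with $-\Delta\psi=\phi$ in $\Omega$, $\psi=0$ on $\pp\Omega$, the remainder must solve
$$
\pp_t\phi+\nn^\perp\Psi_0\cdot\nn\phi+\nn^\perp\psi\cdot\nn\omega_0+\nn^\perp\psi\cdot\nn\phi=-\mathcal E_0 .
$$

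\textbf{Step 2 (gluing decomposition and the two linear theories).} Decompose $\phi=\phi^{out}+\sum_{j=1}^N\phi^{in}_j$, where $\phi^{in}_j$ is supported in a fixed ball about $\xi^0_j(t)$ and written in the variable $y=(x-\xi_j(t))/\ve$ for a center $\xi_j(t)$ to be chosen close to $\xi^0_j(t)$. The \emph{inner} equation for $\phi^{in}_j$ is a transport equation whose spatial part, at leading order, is governed by the linearized Liouville operator: with $-\Delta_y\psi=\phi^{in}_j$ one has $\nn^\perp\psi\cdot\nn\omega_0+\nn^\perp\Psi_0\cdot\nn\phi^{in}_j\approx\ve^{-4}\kappa_j\,\nn^\perp(\log W)\cdot\nn\big(L_W\psi\big)$, where $L_W=\Delta_y+W$ and the relevant non-decaying elements of $\ker L_W$ are the two translation modes $Z_i=\pp_{y_i}U$ and the dilation mode $Z_0=y\cdot\nn U+2$. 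Since $\log W$ is radial the frozen spatial operator has a large kernel, so one must use the time direction: solving the inner evolution in a weighted $L^\infty$ space that records the $|y|^{-4}$ decay of $W$ and the logarithmic growth of $\psi$ reduces, through a Duhamel/characteristics argument, to a finite family of compatibility conditions attached to $Z_0,Z_1,Z_2$. The two translation conditions are removed by modulating $\xi_j(t)$ --- this is what promotes \equ{sist} to an exact law for the true solution --- and the dilation condition by a scalar adjustment of the concentration scale. The \emph{outer} equation for $\phi^{out}$ is, by construction, a transport equation $\pp_t\phi^{out}+\nn^\perp\Psi_0\cdot\nn\phi^{out}=\text{(source supported away from the cores, or of higher order there)}$ along the smooth bounded field $\nn^\perp\Psi_0$, solved by integration along characteristics, which yields $|\psi^{out}|+\ve\,|\nn\psi^{out}|\le C\ve^2$.

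\textbf{Step 3 (fixed point).} The coupled system --- the outer transport equation, the $N$ inner linearized problems, and the modulation ODEs for $(\xi_j)_j$, the latter a small perturbation of \equ{sist} solved with $\xi_j(0)=\xi^0_j(0)$ --- defines a map on the product of the above weighted spaces which, for $\ve$ small, is a contraction. Its fixed point is a genuine solution of \equ{11} of the form \equ{formas}, and translating the weighted bounds back to $\Omega\times[0,T]$ gives $|\phi_\ve|\le\ve^\sigma\omega_0$ and $|\psi_\ve|+\ve\,|\nn\psi_\ve|\le C\ve^2$.

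\textbf{Main difficulty.} In sharp contrast with parabolic gluing, the linearized operator here is purely of transport (hyperbolic) type: there is no smoothing, no gain of derivatives, and $\psi$ depends nonlocally on $\phi$ through $-\Delta\psi=\phi$, so every estimate has to be forced through by exploiting the conservation structure of Euler together with sharp weighted norms. The genuine crux is the solvability theory for the inner operator $\phi\mapsto\pp_t\phi+\nn^\perp\Psi_0\cdot\nn\phi+\nn^\perp\psi\cdot\nn\omega_0$: one needs invertibility modulo the Liouville kernel with decay estimates precise enough to dominate both the inner--outer interaction term $\nn^\perp\psi^{out}\cdot\nn\omega_0$ and the quadratic term $\nn^\perp\psi\cdot\nn\phi$. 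The rotational degeneracy --- radial stream functions lie in the kernel of $\nn^\perp(\log W)\cdot\nn(\,\cdot\,)$, forcing heavy use of the time variable --- and the merely logarithmic growth of $\psi$ at the inner scale are what make this step delicate.
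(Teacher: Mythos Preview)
Your overall architecture is right and matches the paper closely: inner--outer gluing, an improved first approximation that makes the leading error cancel via \equ{sist}, inner problems in the rescaled variable with Liouville linearization, outer transport, and modulation of the centers to kill the translation modes. Two points, however, are substantive gaps rather than matters of phrasing.

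\medskip
\textbf{The inner linear theory is not a Duhamel/characteristics estimate.} The inner equation is not a pure transport equation: it contains the nonlocal term $\nabla_y^\perp\psi\cdot\nabla_y U_0$ with $-\Delta_y\psi=\phi$, so integration along characteristics alone cannot close. The mechanism the paper actually uses is an energy identity: one tests the equation against $g=U_0^{-1}\phi-\psi$ and observes that, after stereographic projection $\R^2\to S^2$, the quadratic form $\int_{\R^2}\phi\,g$ becomes $\int_{S^2}\tilde\phi\big(\tilde\phi-2(-\Delta_{S^2})^{-1}\tilde\phi\big)$, which is coercive by the spectral gap of $-\Delta_{S^2}$ once the first four spherical harmonics are removed. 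This is why four (not three) orthogonality conditions are imposed --- mass, two translations, \emph{and} the mode ${\bf Z}_3=\frac{1-|y|^2}{1+|y|^2}+O(\ve^2)$ --- and why two scalar modulations $\alpha_{0j},\alpha_{3j}$ are needed in addition to $\tilde\xi_j$. The resulting a priori bound reads $\|U_0^{-1/2}\phi\|_{L^2}\le C\ve^{-2}|\log\ve|^{1/2}\|U_0^{-1/2}E\|_{L^2}$, and only after this is in hand are characteristics used to upgrade to pointwise weighted bounds. Your sketch does not contain this idea, and without it the inner step does not close.

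\medskip
\textbf{The fixed point is by degree, not contraction.} The paper does not prove a contraction. The transport operators $\mathcal T_j^{-1}(a)$ have coefficients $a=a_j(\tilde\psi_j,\alpha_j,\psi^{out},\tilde\xi)$ that depend on the unknowns, and the available Lipschitz control is too weak (one loses $\ve^{-2}$ in the inner estimate and only gains back $\ve^{5-\sigma}$ from the error, with further losses from $|\log\ve|$). Instead the system is recast as $\vec p=\tilde{\mathcal F}_\lambda(\vec p)$ with $\tilde{\mathcal F}_\lambda$ compact, a priori bounds are shown to exclude solutions on $\partial\mathcal O$ for all $\lambda\in[0,1]$, and existence at $\lambda=1$ follows from Leray--Schauder degree (the $\lambda=0$ problem is linear). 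Replacing ``contraction'' by ``compactness plus a priori bounds and degree'' is not cosmetic here; it is what makes the argument go through.
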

We see that the solution $(\omega_\ve,\Psi_\ve)$ predicted above satisfies \equ{w*}, \equ{psi*} for $\xi=\xi^0$  and also \equ{energy}.

\medskip
The function $\Psi_0$ is a $\ve$-regularization of Green's function $G(x,\xi)$ where the fundamental solution
 $\log\frac 1{|x-\xi|^4}$ is replaced by
 $ \log\frac 1{(|x-\xi|^2 + \ve^2 )^2}$. There are many other ways to regularize producing a similar result as that of
 Theorem \ref{teo1}, but we have chosen this one since it is convenient for computations.
 The ultimate reason is that the function
\be\label{G0}
\Gamma_0(y) := \log \frac 8{(1+|y|^2)^2}
\ee
satisfies the classical Liouville equation \be \label{U0} -\Delta \Gamma_0   = e^{\Gamma_0}= \frac 8{(1+|y|^2)^2}=:U_0(y) \inn \R^2 .\ee

\medskip
The result of Theorem \ref{teo1} is connected with phenomena known for desingularized vortex-solutions of stationary Euler equation associated to
critical points of the Kirchoff-Routh energy \equ{K}.
It is well-known that a solution of a semilinear equation of the form
$$
-\Delta \Psi =  f(\Psi) \inn \Omega, \quad \Psi =0 \onn \pp \Omega
$$
corresponds to a steady state on \equ{11} setting $\omega = f(\Psi)$. In particular, it is known, see \cite{bp,dkm,egp}
that if $\xi= (\xi_1,\ldots,\xi_N)$ is a non-degenerate critical point of the functional \equ{K} for  $\kappa_j =1$ for all $j$,
then
the singularly perturbed Liouville equation
\begin{align*}
-\Delta \Psi_\ve  = &
\ve^2 e^{\Psi_\ve}=: \omega_\ve \ \hbox{in } \Omega \\
\Psi_\ve = &    0 \, \qquad\hbox{on } \pp\Omega
\end{align*}
has a solution  with
$
\omega_\ve(x) \rightharpoonup \sum_{j=1}^N 8\pi \delta(x-\xi_{j}).
$
Stationary desingularized vortex solutions associated to critical points of \equ{K} for general $\kappa_j$'s  have been found in \cite{wei,smets}.
Gluing methods like those leading to the above mentioned results have led to striking constructions in geometry and various asymptotically singular elliptic equations. The general scheme we follow in the current time-dependent setting is a gluing of a similar kind, however considerably  more delicate. As far a we know this is the first result where precise asymptotics are obtained for a problem of this kind in Euler flows.
	
\medskip
In the next section we explain the scheme of the proof of Theorem \ref{teo1}  which is carried out in the subsequent sections.

\section{Scheme of the construction in Theorem \ref{teo1}}

\subsection{Construction of an approximate solution}
We consider a solution $\xi^0(t)$ of \equ{sist} as in the statement of Theorem \ref{teo1} and a function $\xi(t)$ close to $\xi^0(t)$ that we leave
as a parameter to be adjusted.
 For the sake of notation we write in what follows the functions in \equ{piko1}-\equ{piko2}
 as $\Psi_0(x,t;\xi)$, $\omega_0(x,t;\xi)$ without making explicit their dependence on $\ve$.
 It is convenient to express them in the form

\begin{align}
\Psi_{0} (x,t;\xi) & =  \sum_{j=1}^N \kappa_j  \left[ \Gamma_0\left( \frac {x-\xi_j(t)} {\ve} \right ) -  H(x, \xi_j(t))  -   \log 8\ve^2 \right]
\label{psistar}
\\
\omega_{0} (x,t;  \xi) & =   \sum_{j=1}^N \frac {\kappa_j}{\ve^2}  U_0 \left( \frac {x-\xi_j(t)} {\ve} \right ).
\nonumber
\end{align}
where $\Gamma_0$ and $U_0$ are defined by \equ{G0}-\equ{U0}.

\medskip
We look for a solution of the Euler equation
\begin{align}
\label{euler}
\begin{aligned}
E(\omega, \Psi)\ := &\ \omega_t  +   \nn^\perp \Psi \cdot \nn \omega  \, =\, 0  \inn\quad  \Omega \times (0,T), \\
E_2(\omega, \Psi)\ := & \ \Delta \Psi + \omega  \, =\, 0\qquad \quad \inn\quad  \Omega \times (0,T) ,\\
& \qquad\Psi\,  =\, 0      \qquad\qquad  \onn\quad \pp\Omega \times (0,T) ,
\end{aligned}
\end{align}
of the form
\begin{align}
\omega(x,t)  = & \omega_0 (x,t;  \xi) +  \vp(x,t)\label{f1}\\
\Psi(x,t)  = & \Psi_{0} (x,t; \xi) +  \psi(x,t)\label{f2}
\end{align}
Let us compute the error of approximation for $(\omega_0(\cdot;\xi),\Psi_0(\cdot;\xi))$.
We get
$$
E(\omega_0(\cdot;\xi), \Psi_0(\cdot;\xi))  =
$$
$$
\, \ve^{-3}\sum_{j=1}^N [- \kappa_j \dot \xi_j  + \nn_x^\perp \big ( - \kappa_j^2 H(x,\xi_j) +  \sum_{i\ne j}\kappa_i\kappa_j  G(x,\xi_i )\big)  ] \cdot \nn_y U_0\left (\frac {x-\xi_j(t)}{\ve} \right )
$$
and therefore, setting  $y_j =\frac {x-\xi_j(t)}{\ve}$,
$$
\,E(\omega_0(\cdot;\xi), \Psi_0(\cdot;\xi))  \ =\   O(\ve^{-3} ) \sum_{j=1}^N \frac 1{ 1+|y_j|^{5} }
$$
so that in particular the error is of size $O(\ve^2)$ for $x$ away from all the vortices, and $O(\ve^{-3})$ very close to them. The choice
\be\label{vv}\xi(t) =\xi^0(t) + O(\ve)\ee
 in the $C^1$-sense, substantially reduces the error near the vortices. In fact
in that case we quickly see that
$$
\,E(\omega_0(\cdot;\xi), \Psi_0(\cdot;\xi))  \ =\   O(\ve^{-2 }) \sum_{j=1}^N
\frac 1{ 1+|y_j|^{4} }.
$$
The first step in the construction of a solution of the form \equ{f1}-\equ{f2} consists of finding an improvement of the approximation $ \vp_* (x,t;\xi)$, $ \psi_* (x,t;\xi)$
in such a way that
\be\label{w**}
\omega_* (\cdot ;\xi) := \omega_0 (\cdot ;\xi)+ \vp_*(\cdot ;\xi),\quad \Psi_* (\cdot ;\xi) := \Psi_0 (\cdot ;\xi)+ \psi_*(\cdot ;\xi),
\ee
satisfies for an arbitrarily  small $\sigma>0$,
\be \label{erroraprox}\begin{aligned}
 E_*(\ttt \xi) := E(\omega_* (\cdot ;\xi) , \Psi_* (\cdot ;\xi))\, =\,  & \, O(\ve^{1-\sigma} ) \sum_{j=1}^N
\frac 1{ 1+|y_j|^{3} }.\\
E_{2*}(\ttt \xi) :=E_2( \omega_* (\cdot ;\xi) , \Psi_* (\cdot ;\xi))\, =\, &\, O(\ve^{4-\sigma} )
\end{aligned}\ee
for  any $\xi$ of the form (more restrictive than \equ{vv})
$$\xi(t) = \xi^0(t) + \xi^1(t) + \ttt \xi(t) $$
where $\xi^1(t) $ is a certain  explicit $O(\ve^2 |\log \ve| )$ correction of $\xi^0(t)$ and $\ttt \xi(t) = O(\ve^{4-\sigma}) $
in the $C^1$-sense.  This correction corresponds to a substantial improvement of approximation, for the error is now  $O(\ve^{4-\sigma})$ away from the vortex locations and $O(\ve^{1-\sigma})$ close to them.
A good part of the remaining of this paper precisely corresponds to the construction of functions $(\omega_*,\Psi_*)$ as in \equ{w**}-\equ{erroraprox}.

\subsection{Construction of a solution}
We consider a smooth cut-off function $\eta(s)$ with
\be\label{eta} \eta (s)\ =\ \begin{cases} 1 \ \hbox{ for $s\le 1$},\\
  0 \ \hbox{ for $s\ge 2$}.
 \end{cases}\ee

We look for a solution of the Euler equation \equ{euler}
of the form
$$
\begin{aligned}
\omega(x,t)  = & \omega_* (x,t;  \xi) +  \vp(x,t)\\
\Psi(x,t)  = & \Psi_{*} (x,t; \xi) +  \psi(x,t)
\end{aligned}
$$
where $\vp$ and $\psi$ are small corrections of the previously found first approximations.
Each remainder is decomposed as the sum of a  quantity concentrated near the vortices, measured in the slow variables $y_j= \frac{x-\xi_j}{\ve}$ of the ``bubbles'' $\omega_0$, and conveniently cut-off at a small distance from  the vortices, plus a more regular term. More precisely, we set
\begin{align}
  \vp(x,t)= &  \sum_{j=1}^N  {\kappa_j}{\ve^{-2}}  \eta_{1R}^j\, \phi_j \left( \frac {x- \xi_j(t)} {\ve},t \right )\, +\, \phi^{out}(x,t)\label{f11}\\
\psi(x,t)  = &  \sum_{j=1}^N  {\kappa_j} \eta_{2R}^j \, \psi_j \left( \frac {x-\xi_j(t)} {\ve},t \right )\, +\,  \psi^{out}(x,t)\label{f22}
\end{align}
where we denote
\[
\eta_{mR}^j(x,t;\xi) = \eta\left ( \frac {|x- \xi_j(t)|}{m R\ve}    \right ).
\]
for a large,  $\ve$-dependent number $R>0$ with $R\ve \ll 1$ that we will later specify. The {\em inner-outer gluing method} consists of finding
$(\vp , \psi)$ of the form \equ{f11}-\equ{f22}
such that the functions
\begin{align*}&\phi^{in}(y,t) = (\phi_1(y,t),\ldots, \phi_N(y,t)),
\ \psi^{in}(y,t) = (\psi_1(y,t),\ldots, \psi_N(y,t)),  &\quad (y,t)\in \R^2\times [0,T]  \\ &\psi^{out}(x,t),\quad \phi^{out}(x,t),\quad   \xi(t)=\xi^0(t)+\xi^1(t) +\ttt \xi(t),\quad\qquad& (x,t)\in \Omega \times [0,T]
\end{align*}
satisfy the following system of equations.
\begin{align}\label{inn1}\left\{ \begin{aligned}
&E_j(\phi^{in},\psi^{in}, \psi^{out},\ttt \xi)(y,t)\, :=\, \ve^2 \pp_t \phi_j\, \\ & + [ \nn_y^\perp (\Psi_* + \kappa_j \psi_j +\psi^{out}) -\ve\dot\xi  ]\cdot \nn_y\phi_j +  \ve^2\nn_y (\kappa_j \psi_j +\psi^{out}) \cdot \nn \omega_*
 \\ & +\,  E_*(\ttt\xi) \, =\, 0, \\
&-\Delta_y \psi_j = \phi_j, \quad (y,t)\in B_R(0)\times [0,T],
 \end{aligned}
\right. \end{align}
coupled with
\begin{align}\label{out1}\left\{ \begin{aligned}
&E^{out}_1(\phi^{in},\psi^{in}, \psi^{out},\ttt \xi)(x,t)\, :=\, \\ & \pp_t \phi^{out} \, + \nn_x^\perp [\Psi_* + \sum_{j=1}^N \kappa_j \eta_{2R,j} \psi_j +\psi^{out})]\cdot \nn_x\phi^{out} \\
&+  \ve^{-2}  \sum_{j=1}^N  \kappa_j \phi_j \big[ \pp_t \eta_{1R,j} + \nn_x^\perp (\Psi_* + \sum_{j=1}^N \kappa_j \eta_{2R,j} \psi_j +\psi^{out})) \cdot
 \nn_x \eta_{1R,j} \big ]
\\ &+  (1-  \sum_{j=1}^N\eta_{1R,j}) \nn_x ( \,  \sum_{l=1}^N \kappa_l \eta_{2R,l}\,\psi_l +\psi^{out}) \cdot \nn_x \omega_*
 \\ & +\,   (1- \sum_{j=1}^N \eta_{1R,j} ) E_*(\ttt\xi) \, =\, 0,\quad (x,t)\in \Omega \times [0,T]\end{aligned}
\right. \end{align}
and
\begin{align}\label{out2}\left\{ \begin{aligned}
&E^{out}_2(\psi^{in},\phi^{out}, \psi^{out},\ttt \xi)(x,t)\, :=\, \Delta_x \psi^{out}
   + \phi^{out}
\\ &
 +  \sum_{j=1}^N \kappa_j  (\psi_j \Delta_x\eta_{2j}  + 2 \nn_x \eta_{2j} \cdot \nn  \psi_j) +  E_{2*}(\ttt\xi)  = 0  ,
\quad (x,t)\in \Omega \times [0,T],
 \\ &\qquad \quad \psi^{out}=0, \quad (x,t)\in \pp\Omega \times [0,T]. \end{aligned}
\right. \end{align}
In the above expressions, for a function $g(x,t)$, when no confusion arises, we write $g(y,t)$ meaning
$ g ( \xi_j(t) + \ve y , t) $.

\medskip
A solution  $(\phi^{in}, \psi^{in}, \phi^{out}, \psi^{out}) $ of system \equ{inn1}-\equ{out1}-\equ{out2}
yields by simple addition a solution of \equ{euler}. Appropriate smallness of the remainders $\vp$ and $\psi$ in \equ{f11}-\equ{f22}  will indeed be
obtained if $\ttt\xi$ is in addition suitably adjusted.  In order to obtain the desired solution (with initial conditions equal to zero in all
the parameter functions) we will formulate the system as a fixed point problem for a compact operator in a ball of a suitable Banach space.
We will find a solution by means of a degree theoretical argument. That involves establishing a priori estimates for a homotopical  deformation of the problem into a linear one.

The rest of this paper is devoted to carrying out in detail the steps outlined above.

\section{Construction of a first approximation}\label{aproxsol}

This section will be devoted to the construction of the first approximation $(\omega_*,\Psi_*)$ as a perturbation of $(\omega_0,\Psi_0)$ in the form \equ{w*}.
 We will do this in the corresponding version of
System \equ{inn1}-\equ{out2} where $(\omega_*,\Psi_*)$ is replaced by
$(\omega_0,\Psi_0)$. More precisely,
we look for an approximate solution of the form
\begin{align}
  \omega_*(x,t;\xi) = & \omega_0(x,t;\xi )  + \sum_{j=1}^N  {\kappa_j}{\ve^{-2}}  \eta_{1R}^j\, \phi_j \left( \frac {x- \xi_j(t)} {\ve},t \right )\, +\, \phi^{out}(x,t)\label{f111}\\
\Psi_*(x,t;\xi)  = & \Psi_0(x,t;\xi) + \sum_{j=1}^N  {\kappa_j} \eta_{2R}^j \, \psi_j \left( \frac {x-\xi_j(t)} {\ve},t \right )\, +\,  \psi^{out}(x,t)\label{f222}
\end{align}
with a similar notation as in \equ{f11}-\equ{f22}.
In the remaining of this section we let
\be \label{RR}R = \frac{\delta}{\ve}
\ee where $\delta$ is a fixed, sufficiently small number. We denote
$$
\quad  \phi^{in}(y,t) = (\phi_1(y,t) , \ldots, \phi_N(y,t) ), \quad \psi^{in}(y,t)  = (\psi_1(y,t) , \ldots, \psi_N(y,t) ).
$$
Let  $E(\omega, \Psi)$, $E_2(\omega,\Psi)$ be the ``error'' operators defined in \equ{euler}.
Then for $(\omega_*, \Psi_*)$ given by \equ{f111}-\equ{f222}
 the following holds:
\be\label{error0}
\begin{aligned}
 E(\omega_*, \Psi_*) \ =&\    \ve^{-4}  \sum_{j=1}^N  \eta_{1R}^j E_{0j}(\phi_j, \psi_j, \psi^{out}; \xi)   +   E_0^{out} ( \phi^{in}, \psi^{in}, \phi^{out}, \psi^{out}; \xi)\\
  E_2(\omega_*, \Psi_*)\ =&\   E_{20}^{out}( \psi^{in}, \phi^{out},\psi^{out} ,\xi  ).
\end{aligned}
\ee
Here, for $j=1,\ldots, N$,
\be\label{Ej} \begin{aligned}
&E_{0j}(\psi_j,\phi_j, \psi^{out} ; \xi ) \ : =\ \\
& \ve^2 \pp_t \phi_j  +    \big [   \nn_y^\perp \big (\Psi_0(\cdot ;\xi)   + \kappa_j  \psi_j + \psi^{out}  \big ) - \ve  \dot { \xi_j} \big ]\, \cdot\, \nn_y (U_0 + \phi_j ),
 \end{aligned}
\ee
while
\begin{align}
\nonumber
E^{out}_0( \phi^{in},\psi^{in}, \phi^{out},\psi^{out} ,\xi  ) (x,t)
&  =
\phi^{out}_t    +\,  \nn^\perp _x\big (\Psi_0 (\cdot ; \xi)
+ \sum_{j=1}^N \eta_{2j}\psi_j  + \psi^{out} \big )  \cdot \nn \phi^{out}
\\
\nonumber
& \quad +\ \ve^{-2}  \sum_{j=1}^N \kappa_j\, \pp_t\eta_{1j}\phi_j  -\,\ve^{-2} \sum_{j=1}^N  \kappa_j(1-\eta_{1j} )  \ve^{-1}  \dot{\xi}_j \cdot  \nn_x U_{0j}
\\
\nonumber
& \quad +\,
\ve^{-2}  \sum_{j=1}^N \kappa_j\, [ \phi_j  \nn_x\eta_{1j} +   (1-\eta_{1j})  \nn_x U_{0j})\, ]\,  \\
&
\quad \cdot
    \nn^\perp \big (\Psi_0 (\cdot; \xi)  + \sum_{j=1}^N \eta_{2j} \psi_j + \psi^{out} \big )
    \inn \Omega\times [0,T],
\label{E1}\end{align}
where
 $U_{0j}(x,t)  =  U_0\left ( \frac {x- \xi_j(t)}{\ve}    \right )  $,
and
\be
   E_{20}^{out}( \psi^{in}, \phi^{out},\psi^{out} ,\xi  )\ = \
     \Delta_x  \psi^{out}    +
     \phi^{out}
      +  \sum_{j=1}^N \kappa_j ( \psi_j \Delta_x\eta_{2j}  + 2 \nn_x \eta_{2j} \cdot \nn_x   \psi_j)
\label{E2}\ee
and we assume
$$
\psi^{out} = - \Psi_0  \onn \pp\Omega\times [0,T].
$$
The main result of this section states the existence of an improvement of approximation of the form \equ{f111}-\equ{f222} so that in particular the bounds  \equ{erroraprox} hold.

\medskip
We will restrict ourselves to parameter functions $\xi(t)$ of the form
\be
\xi(t) = \xi^0(t)  + \xi^1(t) + \tilde \xi(t)
\label{xi0}\ee
where $\xi^1(t)$ is an explicit function  with size $O(\ve^2\log\ve)$, and for a fixed, arbitrarily small $\sigma>0$ we impose
$$
\begin{aligned}
\|\ttt\xi \|_{C^1[0,T]}\,:=\,\|\ttt \xi \|_\infty + \|\dot {\ttt \xi} \|_\infty \  & \le\  \ve^{4-\sigma}.
\end{aligned}
$$

\begin{prop}\label{aproxi}
There exists a function $\xi^1(t)$ as above such that that for any $\xi(t)$ as in $\equ{xi0}$ there exist  functions
$
\phi_{j}^*(y,t;\ttt\xi) ,  \ \psi_{j}^*(y,t;\ttt\xi) ,\
\phi^{out*}(x,t;\ttt\xi) , \  \psi^{out*}(x,t;\ttt\xi)
$
such that
\begin{align*}
(1+ |y|^2) \,|\phi^*_{j}(y,t;\ttt\xi)|\, +\,  |\psi_{j}^*(y,t;\ttt\xi)| \, \le  &\,  C\ve^2,  \inn B_R\times [0,T],    \\
|\phi^{out*}(x,t;\ttt\xi)|\,  +\, |\psi^{out*}(x,t;\ttt\xi)| \le  &\,  C\ve^2,      \inn \Omega \times [0,T]  \\
\end{align*}
and the following error bounds hold:
\begin{align*}
&E_{0j} (\phi_{j }^* , \psi_{j}^* , \psi^{out*},\xi)(y,t) = \\
 &  \ve  [ -\dot {\ttt \xi}(t) + \nn_{\xi_j} ^\perp K(\xi^0(t) +\xi^1(t) +\ttt \xi(t) ) -   \nn_{\xi_j} ^\perp K(\xi^0(t) +\xi^1(t) )] \cdot \nn_y U_0(y)  + E^*_j (\ttt \xi )(y,t)
\end{align*}
for $K$ given by $\equ{K}$, and
\begin{align*}
 \big|\,E^*_j (\ttt \xi )(y,t) \big| \ \le &\   \frac{ C}
 { 1 + |y|^{3}} \, \, \ve^5|\log\ve|^2 ,\\
 \big|\,E_*^{out}(\ttt\xi) (x,t) \big|\, +\, \big|\,E^{out}_{2*}(\ttt\xi) (x,t) \big|    \ \le & \ C \ve^4 |\log\ve|^2
 \end{align*}
and we have denoted
\be \begin{aligned}\label{roger}
E^{out}_* (\ttt\xi)\, :=& \,E^{out}_0  (\phi^{in*} ,\psi^{in*} , \phi^{out*}, \psi^{out*},\xi ) ,\\ E^{out}_{2*}(\ttt\xi) \, := &\, \,E^{out}_{20}  (\psi^{in*} , \phi^{out*}, \psi^{out*},\xi ).
\end{aligned}
\ee

\end{prop}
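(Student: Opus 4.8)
\emph{Strategy.} The plan is to solve the coupled system \equ{Ej}--\equ{E2} --- the version of \equ{inn1}--\equ{out2} with $(\omega_0,\Psi_0)$ in place of $(\omega_*,\Psi_*)$ --- by a finite perturbation scheme, expanding every unknown in powers of $\ve$ and $|\log\ve|$. I would decouple the system by ordering the corrections: first solve the inner problems near each vortex $\xi_j$, treating $\psi^{out}$ as a higher-order datum; then solve the outer problem \equ{E2} for $\psi^{out}$ with boundary value $-\Psi_0|_{\pp\Omega}=O(\ve^{2})$ and the inner corrections frozen; then iterate a bounded number of times, reinjecting the updated $\psi^{out}$ into the inner equations and conversely. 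The number of iterations is fixed by the gap between the size of the initial inner error $E_{0j}(0,0,0;\xi)=O(\ve^{2})(1+|y_j|)^{-4}$ (for $\xi$ as in \equ{vv}) and the target $O(\ve^{5}|\log\ve|^{2})(1+|y_j|)^{-3}$; since each step gains at least one power of $\ve$ up to logarithms, three or four steps suffice. The evolutionary nature of the equations is not an obstacle here: $\ve^{2}\pp_t\phi_j$ is genuinely small, so the corrections are produced by solving stationary (elliptic/transport) problems in $y$ at each frozen $t\in[0,T]$, and $\ve^{2}\pp_t$ of the previous correction is carried inside the next right-hand side.

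\emph{Inner linear theory.} At every step the dominant operator on a pair $(\phi,\psi)$ with $-\Delta_y\psi=\phi$ is the linearization of the transported Liouville profile,
\[
L_0[\phi,\psi]\ :=\ \nn_y^\perp \Gamma_0\cdot\nn_y\phi\ +\ \nn_y^\perp\psi\cdot\nn_y U_0 .
\]
Since $\Gamma_0,U_0$ are radial one has $\nn_y^\perp\Gamma_0\cdot\nn_yU_0\equiv0$, so $L_0$ kills radial functions and, differentiating \equ{U0} in $y$, has the kernel spanned by $(\pp_{y_i}U_0,\pp_{y_i}\Gamma_0)$, $i=1,2$ (translation invariance). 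In angular Fourier modes $e^{ik\theta}$ the operator acts mode by mode --- on mode $k$ the piece $\nn^\perp\Gamma_0\cdot\nn(\cdot)$ is multiplication by $\tfrac{ik}{r}\Gamma_0'(r)\neq0$ --- so $L_0$ is boundedly invertible on the modes $|k|\ge2$, while on mode $1$ one must impose $\int_{\R^2}E\,\nn_yU_0\,dy=0$ and on mode $0$ the right-hand side must be radially mean-free. The latter is automatic: each term of $E_{0j}$ is $\ve^{2}\pp_t(\cdot)$ of a previous correction or a Jacobian $\nn^\perp(\cdot)\cdot\nn(\cdot)$, hence has zero average on circles (and, with the normalization $\int_{\R^2}\phi_j\,dy=0$, on $\R^2$). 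I would solve $L_0$ in the weighted norms $\|(1+|y|^{2})\phi\|_\infty+\|\psi\|_\infty<\infty$, allowing a controlled $\log|y|$ growth of the mode-$1$ part of $\psi$; this delivers, step by step, corrections of exactly the sizes claimed for $\phi_j^*,\psi_j^*$.

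\emph{Resolving the obstruction and the outer solution.} Unravelling the mode-$1$ condition at the relevant order --- keeping track of the regular part $H$ of $\Psi_0$, the cross terms $G(x,\xi_i)$, and $-\ve\dot\xi_j$ --- produces an ODE for the location of each bubble. Writing $\xi=\xi^0+\xi^1+\ttt\xi$ and using that $\xi^0$ solves \equ{sist}, it reads $\kappa_j\dot\xi^1_j=\nn_{\xi_j}^\perp K(\xi^0+\xi^1)-\nn_{\xi_j}^\perp K(\xi^0)+\mathcal R_j$, a linear inhomogeneous system, where $\mathcal R_j$ is an explicit $O(\ve^{2}|\log\ve|)$ forcing coming from the precise form of the regularization $(\omega_0,\Psi_0)$ --- ultimately from the logarithmic divergence of $\int U_0(y)|y|^{2}\,dy$ cut off at the matching scale $|y|\sim R=\delta/\ve$, which is also the source of the $|\log\ve|$ powers throughout. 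Solving this ODE with zero initial data gives the asserted $\xi^1=O(\ve^{2}|\log\ve|)$ in $C^1[0,T]$; one deliberately leaves untouched the $\ttt\xi$-dependent part of the obstruction, which is exactly the term $\ve[-\dot{\ttt\xi}+\nn_{\xi_j}^\perp K(\xi^0+\xi^1+\ttt\xi)-\nn_{\xi_j}^\perp K(\xi^0+\xi^1)]\cdot\nn_yU_0$ displayed in the statement, to be absorbed later in the reduced equation for $\ttt\xi$. With all $\psi_j$ fixed, \equ{E2} is handled by standard Dirichlet elliptic theory --- $\Delta_x\psi^{out}=-\phi^{out}-\sum_j\kappa_j(\psi_j\Delta_x\eta_{2j}+2\nn_x\eta_{2j}\cdot\nn_x\psi_j)$ with $\psi^{out}=-\Psi_0$ on $\pp\Omega$ --- giving $\|\psi^{out}\|_{C^1(\Omega)}\le C\ve^{2}$, and reinjecting everything into \equ{E1}, \equ{E2} leaves the residuals $|E^{out}_*(\ttt\xi)|+|E^{out}_{2*}(\ttt\xi)|\le C\ve^{4}|\log\ve|^{2}$.

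\emph{Main obstacle.} The delicate part is the inner linear theory together with the logarithmic bookkeeping: inverting $L_0$ in weighted spaces with bounds uniform in $t$, correctly isolating the mode-$1$ orthogonality conditions and checking mode-$0$ compatibility, and then controlling how the (at worst) logarithmic growth of the inner corrections at infinity couples through the cut-offs $\eta_{mR}^j$ to $\psi^{out}$ --- this coupling is what generates the $|\log\ve|$ factors and the size of $\xi^1$. Closing the finite iteration requires that the loss from the coupling terms --- the products $\nn^\perp\psi^{out}\cdot\nn\omega_0$, the $\phi_j\nn_x\eta_{1j}$ and $(1-\eta_{1j})\nn_xU_{0j}$ contributions in \equ{E1}, and the cut-off commutators in \equ{E2} --- never exceeds the power of $\ve$ gained at each step, for which the choice $R=\delta/\ve$ in \equ{RR} and the precise weights are essential.
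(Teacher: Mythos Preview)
Your high-level strategy --- iterative corrections near each vortex, mode-by-mode inversion of the linearized operator $L_0$, mode-$1$ obstruction yielding the ODE for $\xi^1$ --- matches the paper's. But there is a genuine gap in the inner iteration.

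After the first elliptic improvement (the paper's $\bar\phi^1,\bar\psi^1$, built from the corrections $\psi^{(k)}$, $k=2,3,4$), part of the residual error, called $\bar E_j^3$ in the paper, is of size $O(\ve^5)$ but decays only like $(1+|y|)^{-1}$. It contains, among other things, $\ve^2\pp_t\phi^{(3)}$: since $\phi^{(3)}$ itself behaves like $(1+|y|)^{-1}$, so does its time derivative, and similarly for the Taylor remainder $\mathcal Q(\ve y;\xi)\cdot\nn U_0$. This decay is \emph{too slow} for your elliptic scheme: inverting $L_0$ on a right-hand side with $(1+|y|)^{-1}$ decay (even on modes $|k|\ge 2$) produces $\psi$ growing like $|y|^{3}$ and $\phi$ growing like $|y|$, so the iteration leaves the space $\|(1+|y|^2)\phi\|_\infty+\|\psi\|_\infty<\infty$ you propose. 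Your claim that ``each step gains at least one power of $\ve$'' is correct in $\ve$, but the simultaneous loss of spatial decay is fatal. The paper's fix is to abandon the elliptic operator for this piece and instead solve the genuinely time-dependent inner transport equation
\[
\ve^2\phi_t+\nabla_y^\perp\bigl(\Gamma_0+\RR_j^0\bigr)\cdot\nabla_y\phi=\bar E_j^3,\qquad \phi(\cdot,0)=0,
\]
by integrating along characteristics over the fast time $\tau=t/\ve^2$ (Lemmas~3.2--3.3). This \emph{preserves} the $(1+|y|)^{-1}$ decay at the cost of a factor $\ve^{-2}$, giving $\bar\phi^2=O(\ve^3(1+|y|)^{-1})$, which is enough. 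This is not ``a stationary problem at frozen $t$'': the time integration is essential, and the needed gradient estimate (Lemma~3.3) requires the extra hypotheses on $\pp_t\RR$ and $D^2_y\RR$.

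A second, smaller gap: your outer paragraph produces $\psi^{out}$ from a Poisson equation but never constructs $\phi^{out*}$. The error $E_0^{out}(\bar\phi^{in},\bar\psi^{in},0,0;\xi)$ is only $O(\ve^2)$, not the target $O(\ve^4|\log\ve|^2)$, so $\phi^{out*}$ cannot be taken to be zero. In the paper it is obtained by solving the \emph{outer} transport equation $\phi^{out}_t+\nn^\perp_x(\Psi_0+\psi_1^{out})\cdot\nn\phi^{out}+E_0=0$ (Lemmas~3.4--3.6), after which $\psi_2^{out}$ is defined via Poisson with $\phi_1^{out}$ on the right-hand side.
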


We observe that for $(\omega_*, \Psi_*)$ in $\equ{f111}$-$\equ{f222}$
 with the parameter functions considered above, we have the total errors in $\equ{error0}$ estimated as
$$
\begin{aligned}
| E(\omega_*, \Psi_*)(x,t)| \ \le &\     C\ve^{1-\sigma}\sum_{j=1}^N \frac 1 {1+ |y_j|^3} , \quad y_j = \frac {x-\xi_j(t)} {\ve}, \\
  |E_2(\omega_*, \Psi_*)(x,t)|\ \le &\   C\ve^{4-\sigma}.
\end{aligned}
$$
precisely as predicted in \equ{erroraprox}. The construction yields
uniform Lipschitz dependence on $\ttt \xi$ of this error of the same type as those above. The rest of this section will be devoted to
building this approximate solutions and to proving Proposition \ref{aproxi}.



\subsection{Expression for the errors $E_{0j}$}
 We will first find a convenient expression for the $j$-inner error \equ{Ej}.  We write
\begin{align*}
\Psi_0 (\xi_j + \varepsilon y,t; \xi)
& =
\kappa_j
\log \frac{1}{ \varepsilon^4(1 + |y|^2)^2}
-  \kappa_j  H(\xi_j + \varepsilon y,\xi_j)
\\
&\quad
+
\sum_{i\not=j}
\kappa_i
\left(
\log \frac{1}{(\varepsilon^2 + |\xi_j-\xi_i + \varepsilon y|^2)^2}
-  H(\xi_j + \varepsilon y,\xi_i)
\right)
\\
&=
\kappa_j \big [\, \Gamma_0(y)
- \log(8\varepsilon^4)
+ \ttt \varphi_j(\xi_j + \varepsilon y;  \xi)
\, \big]
\end{align*}
where
$$
\ttt \varphi_j(x ;  \xi)= \varphi_j(x;  \xi)
+\varepsilon^2 \theta_{j,\varepsilon}(x;  \xi)
$$
and
\begin{align*}\varphi	_j(x ;  \xi) =&
-    H(x,\xi_j)
+
\sum_{i\not=j}
\kappa_j^{-1}\kappa_i G(x,\xi_i) ,\\
\theta_{j,\varepsilon}(x;\xi)
=&\frac{2}{\varepsilon^2}
\sum_{i\not=j}
\kappa_i\kappa_j^{-1}
\log \frac{ |x-\xi_i |^2}{\varepsilon^2 + |x-\xi_i|^2}.
\end{align*}
Let us set
\begin{align}
\label{defRa}
\RR_j (y,t;\xi) = \ve \kappa_j^{-1}\dot \xi_j^\perp \cdot y  +  \tilde \varphi_j(\xi_j+\ve y; \xi )  - \tilde \varphi_j (\xi_j ; \xi ).
\end{align}
For notational simplicity we omit below the subindex $j$ in $\phi_j$ and $\psi_j$.
\begin{align}
\kappa_j ^{-1} E_{0j} (\phi,\psi, \psi^{out};\xi) &=
\kappa_j ^{-1}\varepsilon^2 \partial_t  \phi
+ \nabla_y^\perp \left[   \Gamma_ 0 + \RR_j +    \psi + \kappa_j^{-1} \psi^{out}  \right]
\cdot
 \nabla_y   (U_0 + \phi ) \nonumber  \\
&=  \nabla ^\perp  \Gamma_ 0 \cdot   \nabla \phi + \nabla^\perp    \psi  \cdot \nabla U_0  \nonumber \\
&+  \nabla^\perp \RR_j  \cdot \nabla U_0  +
\nabla^\perp \RR_j  \cdot \nabla \phi +   \nabla^\perp \psi \cdot \nabla \phi  + \kappa_j ^{-1}\ve^2 \partial_t  \phi \nonumber \\ & +    \kappa_j^{-1}\nn^\perp \psi^{out}\nn (U_0 +   \phi).
\label{Ej1}\end{align}
 We find first functions $(\phi,\psi,\xi)$  that improve the first error
\be\label{e0}
\kappa_j^{-1}E_{0j}(0,0,0;\xi)(y,t) =   \nabla_y^\perp \RR_j(y,t)  \cdot \nabla U_0(y).
=   \frac {O(\ve )}{ 1+|y|^{5}} .  \ee
As in the statement of the proposition
we restrict ourselves to parameter functions $\xi(t)$ of the form
\be
\xi(t) = \xi^0(t)  + \xi^1(t) + \tilde \xi(t)
\label{xi}\ee
where for some fixed numbers $M>0$ and $\sigma \in (0,1)$ that we will fix later,
we have
\be
\begin{aligned}
\|\xi^{1} \|_{C^2[0,T]}\,:=\,\|\xi^{1} \|_\infty + \|\dot \xi^{1} \|_\infty + \|\ddot \xi^{1} \|_\infty\  & \le\  M\ve^2|\log \ve|, \\
\|\ttt\xi \|_{C^1[0,T]}\,:=\,\|\ttt \xi \|_\infty + \|\dot {\ttt \xi} \|_\infty \  & \le\  \ve^{4-\sigma}.
\end{aligned}
\label{cotasxi}\ee


\medskip
The elimination of part of the error \equ{e0} will be done by solving
elliptic equations that involve the linear operator in the second line of formula \equ{Ej1}
$$
 L[\psi]\,  :=    \, \nabla^\perp  \Gamma_ 0 \cdot   \nabla \phi + \nabla^\perp    \psi  \cdot \nabla U_0, \quad \phi= -\Delta\psi .
$$
More precisely, for the large number $R$ given by \equ{RR}  we consider the general equation
\be\label{linear1}
L[\psi] + g =0\inn B_{8R}, \quad \psi = 0\onn \pp B_{8R}, \quad -\Delta \psi = \phi.
\ee
For a  bounded function $g: B_{8R}\subset \R^2  \to\R$. We solve this problem finding estimates for $\psi$  in terms of prescribed decay on $g$.

\subsection{Solving Equation \equ{linear1}}

 Using that $ -\Delta \Gamma_0 = f(\Gamma_0) = U_0 $ where $f(u) = e^u$ and $-\Delta \psi = \phi$ we see that
 \begin{align*}
L[\psi]\  := &  \ \  \ \nabla^\perp  \Gamma_ 0 \cdot   \nabla \phi + \nabla^\perp    \psi  \cdot \nabla U_0\\
=& - \nabla^\perp \Gamma_0 \cdot \nabla [ \Delta \psi + f'(\Gamma_0)\psi ].
\end{align*}
Let us consider polar coordinates $y=\rho e^{i\theta}$ in $\R^2$.
It is then easy to check that
$$
L[\psi]  =  - \frac 4{\rho^2 + 1} \frac{\pp}{\pp\theta}  [ \Delta \psi + f'(\Gamma_0)\psi ]
$$
Therefore a necessary condition for the solvability of \equ{linear1} is that
\be \label{ort0} \int_0^{2\pi} g(\rho e^{i\theta} )\, d\theta =0 \foral  \rho\in (0,8R). \ee
On the other hand we clearly have that
$$
L[Z_\ell] = 0, \quad Z_\ell(y) = \pp_{y_\ell} \Gamma_0(y) .
$$
In addition to \equ{ort0} we assume the orthogonality conditions
\be \label{ort2} \int_{B_R}(1+|y|^2)\, g(y)\, Z_\ell (y)\, dy\, =\, 0, \quad \ell=1,2. \ee
We consider right hand sides $g$ with a decay rate in $|y|$. We assume \be \label{decay}
|g(y)| \  \le\  (1+ |y|)^{-\alpha}
\ee
We have the validity of the following result.

\begin{lemma}\label{alpha} Assume that $3<\alpha \le 5$ There exists a constant $C>0$ such that for all $R$ sufficiently large and
 $g\in L^\infty (B_R)$ that satisfies conditions $\equ{ort0}$, $\equ{ort2}$ and $\equ{decay}$,
there exists a unique solution  $(\psi,\phi)$ of equation $\equ{linear1}$ that satisfies
$$ \int_0^{2\pi} \psi(\rho e^{i\theta})\, d\theta =0 \foral  \rho\in (0,8R). $$
 and the estimate
\begin{align}
& |\psi(y)| \, +\,  (1+|y|)\, |\nn \psi(y)|\,  + \,  (1+ |y|^2)\, | \phi(y)| 
\nonumber
\\
& \le \  C{(1+|y|)^{4-\alpha}} \,\begin{cases} \log \Big ( \frac {16R} {|y|+1}  \Big) & \hbox{ if \quad} \alpha =5   \\ 1  & \hbox{ if \quad}  3< \alpha < 5   \end{cases}
\label {formula} \end{align}

\end{lemma}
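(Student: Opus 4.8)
\textbf{Proof plan for Lemma \ref{alpha}.}

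The plan is to exploit the factorization $L[\psi] = -\nabla^\perp\Gamma_0\cdot\nabla[\Delta\psi + f'(\Gamma_0)\psi]$ together with the polar-coordinate identity $L[\psi] = -\frac{4}{\rho^2+1}\,\partial_\theta[\Delta\psi + f'(\Gamma_0)\psi]$, which effectively reduces the first-order transport-type equation $L[\psi]+g=0$ to the second-order elliptic equation $\Delta\psi + f'(\Gamma_0)\psi = h$ after an integration in $\theta$. Concretely, writing $h := \Delta\psi + f'(\Gamma_0)\psi$, the equation $L[\psi]+g=0$ says $\partial_\theta h = \frac{\rho^2+1}{4}\,g$; since condition \equ{ort0} guarantees $\int_0^{2\pi} g\,d\theta = 0$ for every $\rho$, we may integrate to obtain $h(\rho,\theta)$ uniquely up to its $\theta$-average, and we fix that average freely (say zero) — equivalently, $h = \frac{\rho^2+1}{4}\,\partial_\theta^{-1} g$ where $\partial_\theta^{-1}$ denotes the antiderivative with vanishing mean. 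The decay hypothesis \equ{decay} with $|g|\le (1+|y|)^{-\alpha}$ then yields $|h(y)|\le C(1+|y|)^{2-\alpha}$ on $B_{8R}$. The problem is thereby reduced to solving the linear Liouville-type equation
\be\label{proofplan-aux}
\Delta\psi + f'(\Gamma_0)\psi = h \inn B_{8R}, \quad \psi = 0 \onn \pp B_{8R},
\ee
together with tracking the normalization $\int_0^{2\pi}\psi\,d\theta = 0$, which is compatible because $f'(\Gamma_0) = e^{\Gamma_0} = U_0$ is radial and $h$ has vanishing $\theta$-average by construction.

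Next I would analyze \equ{proofplan-aux}. The operator $\Delta + f'(\Gamma_0) = \Delta + \frac{8}{(1+|y|^2)^2}$ is the linearization of Liouville's equation around $\Gamma_0$; its bounded kernel in $\R^2$ is spanned by $Z_0 = y\cdot\nabla\Gamma_0 + 2$ (the dilation mode) and $Z_1, Z_2 = \partial_{y_\ell}\Gamma_0$ (the translation modes). Restricting to the zero-$\theta$-average sector kills $Z_1, Z_2$ automatically, leaving only the radial mode $Z_0$ to contend with — but in the radial sector on a large ball with Dirichlet data the relevant indicial/Fredholm analysis shows one can still solve with good bounds; alternatively, and this is the cleaner route, one works in weighted $L^\infty$ spaces and invokes a standard a priori estimate plus a blow-up/barrier argument. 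The orthogonality conditions \equ{ort2} are precisely what is needed to handle the $Z_\ell$ directions in the non-radial part of $g$ (before the $\theta$-averaging that produced $h$), so I would actually keep the full decomposition into Fourier modes in $\theta$: the mode-zero part reduces to the radial ODE above, the mode-$\pm1$ parts are governed by $Z_1,Z_2$ and use \equ{ort2}, and the higher modes $|k|\ge 2$ are handled by a coercivity estimate since $\Delta + U_0$ has no kernel there. In each sector I would construct barriers of the form $(1+|y|)^{4-\alpha}$ (with the logarithmic correction $\log\frac{16R}{|y|+1}$ when $\alpha = 5$, which arises exactly because $4-\alpha = -1$ is the borderline decay rate where the particular solution of the ODE picks up a log) and deduce the stated pointwise bound on $\psi$; the gradient bound $(1+|y|)|\nabla\psi|$ and the bound $(1+|y|^2)|\phi|$ with $\phi = -\Delta\psi$ then follow from elliptic (Schauder/gradient) estimates applied on unit balls after rescaling, using $\phi = f'(\Gamma_0)\psi - h$ for the $\phi$ bound.

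For uniqueness: if $g = 0$ then $h = 0$, so $\psi$ solves the homogeneous equation $\Delta\psi + U_0\psi = 0$ in $B_{8R}$ with $\psi = 0$ on the boundary and $\int_0^{2\pi}\psi\,d\theta = 0$; the Fourier-mode analysis shows each mode must vanish (the kernel elements $Z_0,Z_1,Z_2$ do not satisfy the homogeneous Dirichlet condition on $B_{8R}$, and the $Z_\ell$ with $\ell = 1,2$ are excluded by the zero-average condition anyway), hence $\psi \equiv 0$. For existence one can either patch the mode-by-mode solutions (controlling the sum via the decay of $g$'s Fourier coefficients, which is immediate since $g\in L^\infty$ with the prescribed decay) or set up a fixed-point/Fredholm argument directly in the weighted space.

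\textbf{Main obstacle.} I expect the delicate point to be the uniform-in-$R$ control in the marginal case $\alpha = 5$, where the natural decay rate $(1+|y|)^{4-\alpha} = (1+|y|)^{-1}$ sits exactly at the threshold and the solution genuinely grows like $\log R$ near the outer boundary; getting the barrier constant independent of $R$ and correctly capturing the factor $\log\frac{16R}{|y|+1}$ (rather than a cruder $\log R$) requires choosing the barrier as a carefully tuned solution of the radial equation with the right boundary behavior, and checking that the interaction between the radial kernel mode $Z_0$ and the Dirichlet condition on $B_{8R}$ does not produce a loss. The second subtlety is bookkeeping the normalization $\int_0^{2\pi}\psi\,d\theta = 0$ consistently through the reduction $L[\psi]+g=0 \leadsto \Delta\psi + U_0\psi = h$, since $h$ itself is only determined up to a $\theta$-independent function and one must verify the chosen normalization of $h$ is the one compatible with the prescribed normalization of $\psi$.
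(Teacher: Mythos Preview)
Your plan is essentially the paper's own approach: the identity $L[\psi]=-\frac{4}{1+\rho^2}\partial_\theta(\Delta\psi+U_0\psi)$ reduces the problem to solving $\Delta\psi+U_0\psi=h$ with $h_k=\frac{i(1+\rho^2)}{4k}g_k$ mode by mode, and the paper carries this out with explicit variation-of-parameters formulas built from the positive homogeneous solutions $\zeta_k$, rather than barriers.  Two points need correction.

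First, you have the role of the kernel modes reversed.  The condition $\int_0^{2\pi}\psi\,d\theta=0$ removes the \emph{radial} (mode~$0$) component of $\psi$, which is where the dilation mode $Z_0=\frac{1-|y|^2}{1+|y|^2}$ lives; it does \emph{not} remove $Z_1,Z_2\propto\zeta_1(\rho)e^{\pm i\theta}$, which lie in modes~$\pm1$.  In fact the mode-$0$ sector is completely trivial here: condition~\equ{ort0} says $g_0\equiv0$, so setting $p_0\equiv0$ solves that mode exactly and there is nothing to ``contend with'' regarding $Z_0$.  The genuinely delicate sector is mode~$\pm1$: there $\zeta_1(\rho)=\rho/(1+\rho^2)$ \emph{decays} at infinity, so the naive variation-of-parameters integral does not give uniform control.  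The orthogonality~\equ{ort2} is exactly $\int_0^{8R}(1+\rho^2)g_{\pm1}\zeta_1\rho\,d\rho=0$, which lets one flip the inner integration limits to $\int_r^{8R}$; it is in this flipped formula that the logarithm $\log\frac{16R}{\rho+1}$ appears when $\alpha=5$.  So your ``main obstacle'' is misplaced: the subtle point is mode~$\pm1$, not mode~$0$.

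Second, your plan to ``control the sum via the decay of $g$'s Fourier coefficients'' is not enough as stated: an $L^\infty$ function need not have Fourier coefficients that decay in $k$, so each $g_k$ is only bounded by $(1+\rho)^{-\alpha}$ uniformly in $k$, and the individual bounds $|p_k|\le c_k(1+\rho)^{4-\alpha}$ give no a priori control on $\sum c_k$.  The paper closes this by a supersolution comparison: with $\bar P=\mathcal L_2^{-1}[(1+\rho)^{-\alpha}]$ one checks that $\mathcal L_k\big[\frac{\gamma}{k^3}\bar P\big]+\frac{1+\rho^2}{4|k|}|g_k|\le 0$ for $|k|\ge2$ and $\gamma$ large, which yields $|p_k|\le \gamma k^{-3}\bar P$ and hence summability.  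You will need this (or an equivalent device) to assemble the modes.
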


\begin{proof}

Let us decompose $\psi(y)$, $g(y)$ in Fourier series, using polar coordinates $ y =\rho e^{i\theta}$
\be \label{fourier}
\psi (\rho,\theta) = \sum_{k\in \Z}  p_k(\rho) e^{ik\theta} ,
\quad
g (\rho,\theta) = \sum_{k\in \Z}  g_k(\rho) e^{ik\theta} .
\ee
Condition \equ{ort0} amounts to $g_0 \equiv 0$. Imposing $p_0\equiv 0$,
equation \eqref{linear1} decouples into the infinitely many problems.
\be\label{pk}
\mathcal L_k [p_k] := \partial^2_{\rho} p_k + \frac{1}{\rho}\partial_{\rho} p_k
-\frac{k^2}{\rho^2} p_k + \frac{8p_k}{(1+\rho^2)^2}
= \frac{i(1+\rho^2)}{4k} g_k(\rho), \quad p_k(R) = 0.
\ee
For each $k\ne 0$, there exists a positive function $\zeta_k(\rho)$ such that $\mathcal L_{k} [\zeta_k] =0$ and
\[
\zeta_k(\rho) =  \rho^{|k|} (1+ o(1))  \quad \text{as }\rho \to 0
\]
For $k=\pm 1$ we explicitly have
$ \zeta_k(\rho) = \frac{\rho}{1+\rho^2} $, while for $|k|\ge 2$ we have
\[
\zeta_k(\rho) =  \rho^{|k|} (1+ o(1))  \quad \text{as }\rho \to +\infty.
\]
 Problem \equ{pk} is uniquely solved by the formula
$$
p_{k} (\rho)   =  \mathcal L_k^{-1}[g_k]\, :=    \frac {i}{4k}\zeta_k(\rho)  \int_\rho^{8R} \frac {dr}{ r\zeta_k(r)^2} \int_0^r (1+s^2) g_{k}(s)\zeta_k(s)\,s\, ds . \quad
$$
 Condition \equ{decay} corresponds to $$|g(\rho,\theta)|\ \le\    (1+ \rho)^{-\alpha} $$ with $3< \alpha \le 5$. Let us consider the case $|k|\ge 2$. We have
$$
|p_{k} (\rho)|\ \le\  \mathcal L_k^{-1}[(1+\rho)^{-\alpha} ]\le   c_k (1+\rho)^{|k|}  \int_\rho^{8R}   (1+ r)^{-|k|+3 -\alpha} dr
$$
so that
\[
|p_{k} (\rho)|\ \le\      c_k (1+\rho)^{4-\alpha}
\]
since
$\alpha+ |k| > 4$.
Let us denote $ \bar P(\rho) =   \mathcal L_2^{-1}[ (1+ \rho)^\alpha ]$ we claim that for some $\gamma>0$ and all $|k|\ge 2$ we have the validity of
the estimate
$$
 |p_{k} (\rho)| \ \le \  \frac \gamma {k^3}\bar P(\rho) .
$$
That follows from the fact that the right hand side defines a positive supersolution for the real and imaginary parts of \equ{pk}. Indeed, if $\gamma$ is taken sufficiently large we get
$$
\mathcal L_k \big [ \frac \gamma {k^3}\bar P(\rho) \big ]  +  \frac {1+\rho^2}{4k} |g_k(\rho)|    \le     \gamma \frac {4-k^2} {10\rho^2} (1+ \rho)^{4+\alpha}
$$
Fourier modes $\pm 1$ need to be separately treated because $\zeta_1(\rho)$ decays at infinity.
At this point we observe that
$$
Z_1(y) = \zeta_1(\rho) \cos\theta , \quad Z_2(y) = \zeta_1(\rho) \sin\theta
$$
and that the orthogonality conditions \equ{ort2} assumed are equivalent to
$$
\int_0^{8R} (1+\rho^2)\,g_{\pm1}(\rho) \zeta_1(\rho)\, \rho\, d\rho = 0  .
$$
Therefore we can write
$$
p_{\pm 1} (\rho)  =    \mp \frac {i}{4}\zeta_1(\rho)  \int_\rho^{8R}  \frac {dr}{ r\zeta_1(r)^2} \int_r^{8R}  (1+s^2) g_{\pm 1}(s)\zeta_1(s)\,s\, ds . \quad
$$
and we obtain, if we now assume $3 < \alpha  \le 5$,
$$
|p_{\pm 1} (\rho) | \ \lesssim \    \begin{cases}   \rho^{-1} \log {\frac {16R}{\rho+1}}  &\hbox{ if } \alpha = 5 \\ (1+ \rho)^{4 -\alpha } &\hbox{ if }3<  \alpha < 5     . \end{cases}
$$
The desired result then follows from addition of the above estimates since
$$
|\psi(y)| \ \le\ \sum_{k\in \Z} |p_k(|y|)| .
$$
Finally,  since $\psi$ satisfies the equation
$$
 \Delta \psi + f'(\Gamma_0) \psi  =  -\frac 14 (1+\rho^2) \int_0^\theta g(\rho,\theta)\, d\theta, \inn B_{8R} , \quad \psi=0\onn\pp B_{8R} ,
$$
  the bounds for $\phi$ and $\nn \psi $ follow from standard elliptic estimates.

\end{proof}

\begin{remark}\label{remark100}{\em
We observe that if $2\le \alpha \le 3$ and $|k|\ge 2$ the estimate for $p_k$ given by \equ{pk} yields
\[
|p_{k} (\rho)|\ \le\      c_k (1+\rho)^{4-\alpha}
\begin{cases} 
\log\frac  {16R}{\rho+1} &\hbox{ if } \alpha+ |k| = 4 \\  1  &\hbox{ if }    \alpha+ |k| > 4 \end{cases}
\]
Then if $g_0=g_{\pm 1}= 0$ and $2\le \alpha \le 3$   we have that the statement of Lemma \ref{alpha} holds with estimate \equ{formula} replaced with
$$
\begin{aligned}
& |\psi(y)| \, +\,  (1+|y|)\, |\nn \psi(y)|\,  + \,  (1+ |y|^2)\, | \phi(y)| 
\nonumber
\\
& \le \  C{(1+|y|)^{4-\alpha}} \,\begin{cases} \log \Big ( \frac {16R} {|y|+1}  \Big) & \hbox{ if \quad} \alpha =2   \\ 1  & \hbox{ if \quad}  \alpha >2 \hbox{ or } \alpha =2  \hbox{ and } g_{\pm 2}=0.  \end{cases}
 \end{aligned}
$$

}
\end{remark}


\subsection{Expansion of $\nn^\perp \RR_j$ }

We Taylor expand
\begin{align*}
\ttt \vp_j ( \xi_j+   \ve y ; \xi)   = &\  \ttt \vp_j ( \xi_j ; \xi ) + \ve \nn_x\vp_j ( \xi_j ; \xi )\cdot y  + \sum_{k=2}^4 \frac{\ve^2}{k!}
 D_x^2 \vp_j(\xi_j; \xi) [y]^k  \\
&\, +\,  \ve^3 D_x \theta_{j , \ve} (\xi_j; \xi ) [y]+{\ve^4 \over 2} D_x^2 \theta_j^\ve (\xi_j ; \xi)[y]^2+ Q(\ve y , \xi)
\end{align*}
where $Q(z,\zeta)$ is a function smooth in its arguments that satisfies
$$
  |\nn _zQ(z , \zeta)|\ \le \ C\ve |z|^4.
$$
Hence we find
\begin{align}
\nabla^\perp_y \RR_j(y,t;\xi )&=  \kappa_j^{-2}\ve  \big[ - \kappa_j\dot \xi_j + \nn _{\xi_j}^\perp K(\xi) \big ] + \ve^3 \nn_x^\perp \theta_{j  \ve} (\xi_j; \xi ) \nonumber \\
&+   \sum_{k=2}^4 \frac {\ve^k }{k!} \nn _y ^\perp \big[  D_x^k\vp_j(\xi_j; \xi)  [y]^k \big]  +{\ve^4 \over 2}\nn _y ^\perp \big[ D_x^2 \theta_{j  \ve}  (\xi_j ; \xi)[y]^2 \big] \nonumber \\
&+\mathcal Q(\ve y ; \xi)
\label{expR} \end{align}
 where
 $$
 |\mathcal Q(z ; \zeta )|\ \le\ C\ve\,  \, |z|^4 \, .
 $$ We recall that the basic assumption on $\xi^0$  is precisely that $ - \kappa_j\dot \xi_j^0 + \nn _{\xi_j}^\perp K(\xi^0) = 0 $, hence
 the first term in expansion \equ{expR} is actually of size $O(\ve^3)$.

\subsection{First improvement} We recall that $R= 6\delta\ve^{-1}$.
We will eliminate some of the terms in the first error. We let $y=\rho e^{i\theta}$ and compute
\begin{align*}
g^k(y,t;\xi)\  =&\  \frac 1{k!}   \nn _y ^\perp \big[  D_x^k\vp_j(\xi_j(t); \xi(t))  [y]^k \big] \cdot \nn U_0 (y)\\
=&  \ \frac 1{k!}   \frac {4U_0}{1+ \rho^2}  \frac{\pp}{\pp\theta} \,\big \{  D_x^k\vp_j(\xi_j(t); \xi(t))  [y]^k\big\}.
\end{align*}
We notice that the function $q_k(y,t)= D_x^k\vp_j(\xi_j(t); \xi(t))  [y]^k$ is a harmonic polynomial in $y$ since $\vp_j(x;\xi)$ is harmonic in $x$. Thus $$q_k(y,t) =   \rho^k (\alpha_k(t) \cos k\theta +\beta_k(t) \sin k\theta)$$
with $\alpha$ and $\beta$ $C^1$ functions. As a conclusion, the function $g^k(y,t)$ only has components in modes $k$ and $-k$ in its Fourier
expansion \equ{fourier}. We observe also that $$|g^k(y,t;\xi)| \le C (1+ \rho)^{-\alpha}, \quad \alpha = 6-k. $$
 We consider the cases $k= 2,3,4$.
By Lemma \ref{alpha} and Remark \ref{remark100} we find that there exists a unique solution $(\psi^{(k)} (y,t;\xi)),\phi^{(k)} (y,t;\xi))$ to the equation
$$
L[\psi^{(k)}  ]  + g^{(k)}(y,t; \xi) =0 \inn B_{8R} ,   \quad \psi^{(k)}  = 0 \onn \pp B_{8R} , \quad -\Delta \psi^{(k)}  = \phi^{(k)}
$$
that satisfies the estimates
\begin{align}
&|\psi^{(k)} (y,t)| \, +\,  (1+|y|)\, |\nn \psi^{(k)} (y,t)|\,+ \,  (1+ |y|^2)\, | \phi^{(k)} (y,t)|  \, |\nonumber
\\
& \quad \le   C(1+|y|)^{k-2}    .
\label {formula2}
\end{align}
A similar estimate is of course satisfied for the functions $(\pp_t \psi^{(k)} ,\pp_t \phi^{(k)})$.
In addition we consider the solutions $(\bar\psi_\ell^{(4)}, \bar\phi_\ell^{(4)})$, $\ell =1,2$  of the problems
$$
L[\bar\psi_1^{(4)}]  +    \kappa_j^{-1}  \pp_t \phi^{(2)}(y,t;\xi)   =0 \inn B_{8R} , \quad \bar\psi_1^{(4)} = 0 \onn \pp B_{8R}
$$
and
\be\label{hh}
L[\bar\psi_2^{(4)}]  +     \frac 1{2}   \nn ^\perp \big[  D_x^2\vp_j(\xi_j; \xi)  [y]^2 \big] \cdot \nn \phi^{(2)}(y,t;\xi) =0 \inn B_{8R} , \quad \bar\psi^{(4)} = 0 \onn \pp B_{8R}  .
\ee
By construction, $ \pp_t \phi^{(2)}$ only carries modes $\pm 2$ in its Fourier expansion in $y$.
Besides $| \pp_t \phi^{(2)}(y,t)| \le C(1+|y|)^{-2}$, and hence $(\bar\psi_1^{(4)}, \bar\phi_1^{(4)})$ satisfies the
 estimate
$$
 \begin{aligned}
&|\bar\psi_1^{(4)} (y,t)| \, +\,  (1+|y|)\, |\nn \bar\psi_1^{(4)} (y,t)|\,+ \,  (1+ |y|^2)\, | \bar \phi_1^{(4)} (y,t)| \nonumber
\\
& \quad \le   C(1+|y|)^{2} \log \frac {16R}{ 1+ |y|    }     .
\end{aligned}
$$
To solve \equ{hh}, we claim that
$$
g(y,t;\xi) =  \frac 1{2}   \nn ^\perp \big[  D_x^2\vp_j(\xi_j; \xi)  [y]^2 \big] \cdot \nn \phi^{(2)}(y,t;\xi)
$$
carries only Fourier modes $\pm 4$ (so that $g_0\equiv 0$). Since  $|g(y,t)| \le C(1+|y|)^{-2}$, that implies that \equ{hh} is solved by a function
that again satisfies estimates \equ{formula2} with $k=4$.
To see the validity of the claim, we recall that
$$
D_x^2\vp_j(\xi_j; \xi)  [y]^2 =  \rho^2 (\alpha(t) \cos 2\theta +  \beta(t) \cos 2\theta).
$$
It follows that $$\psi^{(2)}(y,t)=  p(\rho) (\alpha(t) \cos 2\theta +  \beta(t) \cos 2\theta)$$ where $p(\rho)$ is the unique solution of
\equ{pk} with $k=2$ and $g_2(\rho)  =  -U_0(\rho)\rho^2 . $
Hence
$$\phi^{(2)}(y,t)=  q(\rho) (\alpha(t) \cos 2\theta +  \beta(t) \cos 2\theta)$$ for a certain smooth function $q(\rho)$. Then we compute
$$
\nn ^\perp \big[  D_x^2\vp_j(\xi_j; \xi)  [y]^2 \big] \cdot \nn \phi^{(2)}(y,t) = \rho q'(\rho) (\beta^2-\alpha^2) \sin 4\theta +  q(\rho) \alpha \beta \cos 4\theta,
$$
hence the Fourier expansion of this terms only involves modes $\pm 4$ as asserted.

\medskip

We let
$$
\begin{aligned}
\bar \psi^1 (\cdot;\xi)  &= \ve^2 \psi^{(2)} + \ve^3 \psi^{(3)} +
\ve^4 \psi^{(4)} + \ve^4 \bar \psi_1^{(4)}+ \ve^4 \bar \psi_2^{(4)},
\\
\bar \phi^1(\cdot;\xi)   &= \ve^2 \phi^{(2)}  + \ve^3 \phi^{(3)}
+\ve^4 \phi^{(4)} +\ve^4 \bar \phi_1^{(4)}+ \ve^4 \bar \phi_2^{(4)} .
\end{aligned}
$$
We compute  the new error
\be
\kappa_j^{-1} E_j (\bar \phi^1 , \bar \psi^1 ,0, \xi) =  \bar E_j^1   + \bar E_j^2  + \bar E_j^3  +   \bar E_j^4,
\label{exp2}\ee
where the errors $\bar E_j ^l(y,t;\xi)$ are given by
\begin{align*}\bar E_j^1
& =
\left( \kappa_j^{-2}\ve  \big[ - \kappa_j\dot \xi_j + \nn _{\xi_j}^\perp K(\xi) \big ] +\ve^3 \nn_x^\perp \theta_{j  \ve} (\xi_j; \xi )\right)\cdot \nn U_0  ,
\\
\bar E_j^2 &= \ve^4 \nn^\perp \psi^{(2)} \cdot \nn \phi^{(2)}  ,
\\
\bar E_j^3 & =
 \ve^5 \kappa_j^{-1} \pp_t \phi^{(3)}
 \\
 & \ +
 \nabla^\perp \big [ \vp_j(\xi_j+\ve y; \xi) -\vp_j(\xi_j; \xi)- \ve  \nn \vp_j(\xi_j; \xi)\cdot y    \big ] \cdot \nabla (\bar\phi -\ve^2\phi^{(2)}  )
 \\ &\ + \mathcal Q(\ve y ; \xi )\cdot \nn (U_0 + \bar\phi^1)+  \ve^6\pp_t (\phi^{(4)}_1 +  \phi^{(4)}_2)
\\ &\ + \  \ve^2 \nabla^\perp \big [\vp_j(\xi_j+\ve y; \xi) -\vp_j(\xi_j; \xi)- \ve  \nn \vp_j(\xi_j; \xi)\cdot y - \frac 12 D_x^2\vp_j(\xi_j; \xi)  [y]^2    \big ]
\\
& \ \cdot \nabla \phi^{(2)} ,
\\\bar E_j^4 &=
\left( \kappa_j^{-2}\ve  \big[ - \kappa_j\dot \xi_j + \nn _{\xi_j}^\perp K(\xi) \big ] + \ve^3 \nn_x^\perp \theta_{j  \ve} (\xi_j; \xi )\right)\cdot \nn \bar\phi^1
\\ &\ + \nn^\perp (\bar\psi^1-\ve^2\psi^{(2)}    ) \cdot \nn \bar\phi^1 + \nn^\perp \bar\psi^1 \cdot \nn (\bar\phi^1- \ve^2 \phi^{(2)}  )
\end{align*}

The (directly checked) relevant characteristics of each of the above terms are the  following:
\begin{itemize}
\item   $\bar E_j^1$ is  $O( \ve^3 \rho^{-5} )$ at Fourier mode $\pm 1$.

\item  $\bar E_j^2$ is  $O( \ve^4 \rho^{-4} )$   at Fourier mode $\pm 4$.

\item $\bar E_j^3$ is  $O(\ve^5 \rho^{-1}  )$.

\item $\bar E_j^4$ is $O(\ve^5 \rho^{-3}    )$.

\end{itemize}

Our next step is the elimination of  the term  $\bar E_j^3$. To get this, rather than solving  an elliptic problem
 we solve the transport equation

\begin{align}
\left\{
\begin{aligned}
\ve^2 \phi_t
+ \nabla_y^\perp(\Gamma_0(y) +  \RR_j^0( y,t;\xi) ) \cdot \nabla_y \phi
& =  \bar E_j^3(y,t;\xi) ,
\quad \text{in } B_R\times [0,T]
\\
\phi(y,0) & =0 , \quad \text{in } B_R
\end{aligned}
\right.
\label{pico}\end{align}
Here $\RR^0_j( y,t;\xi)$ is a slight modification of the potential  $\RR_j (y,t;\xi)$ in \equ{defRa}.
We recall that we are choosing $\xi = \xi^0 + \xi^{1} + \ttt \xi$ as in \equ{xi}.
We take
\begin{align}
\label{defRa1}
\RR^0_j (y,t;\xi) = &\ve \kappa_j^{-1} (\dot\xi^0_j + \dot\xi_j^{1})^\perp \cdot y  +  \tilde \varphi_j(\xi_j+\ve y; \xi )  - \tilde \varphi_j (\xi_j ; \xi )\nonumber\\ =&\RR_j (y,t;\xi) - \ve \kappa_j^{-1} {\dot{\ttt\xi}_j}^\perp\cdot y .
\end{align}
The reason for this modification is that we will need uniform differentiability in $t$ of this coefficient.



We consider a smooth cut-off function $\eta(s)$ as in \equ{eta}, and extend $\RR^0_j$ and $E_j^3$ to entire space by setting
$$
\ttt\RR_{0j} (y,t;\xi) =
\RR_{0j} (y,t;\xi) \eta \Big( \frac{|y|}{2R}\Big) ,
\quad
\ttt E_j^{30}(y,t)  =  \bar E_j^{3}(y,t;\xi_0) \eta \Big( \frac{|y|}{2R}\Big).
$$
Note that in $\ttt E_j^{30}$ we have frozen $\xi = \xi^0$.

We will then have a solution to \equ{pico} by restricting to $B_R$ the solution of the Cauchy problem
\begin{align}
\left\{
\begin{aligned}
\ve^2 \phi_t
+ \nabla_y^\perp(\Gamma_0(y) + \ttt \RR_j^0( y,t;\xi) ) \cdot \nabla_y \phi
& =\ttt E_j^{3}(y,t;\xi^0) ,
\quad \text{in } \R^2\times [0,T]
\\
\phi(y,0) & =0 , \quad \text{in } \R^2
\end{aligned}
\right.
\label{pico0}\end{align}
To solve \equ{pico0} we consider a slightly more general equation of that form and state estimates that will also be used later, whose proofs
we postpone to  \S~\ref{secProofTransportInner}.

\subsection{The inner transport equation}
\label{secInnerTransport}
We consider a transport equation of the form
\begin{align}
\left\{
\begin{aligned}
\ve^2 \phi_t
+ \nabla_y^\perp(\Gamma_0(y) +  \RR( y,t) ) \cdot \nabla_y \phi
& = E(y,t) ,
\quad \text{in } \R^2\times [0,T],
\\
\phi(y,0) & =0 , \quad \text{in } \R^2
\end{aligned}
\right.
\label{pico1-new}\end{align}
where $\RR$ is a perturbation term on which we assume that it is defined in $\R^2$ and $\RR(y,t)=0$ for $|y|\geq 4R$,
$R=\frac{4\delta}{\varepsilon}$ and the estimate
\be
|\nabla_y \RR( y,t) | \ \le \ M \ve^2 (1+|y|) .
\label{papa-new}\ee
Here $\delta>0$ is fixed.
It is convenient to let $t=\ve^2\tau $ and represent the equation for $\phi= \phi(y,\tau)$ as
\begin{align}
\left\{
\begin{aligned}
\phi_\tau
+ \nabla_y^\perp(\Gamma_0(y) +  \RR( y,\ve^2\tau ) ) \cdot \nabla_y \phi
& =E(y,\ve^2\tau ) ,
\quad \text{in } \R^2\times [0,\ve^{-2} T],
\\
\phi(y,0) & =0 , \quad \text{in } \R^2
\end{aligned}
\right.
\label{pico2}\end{align}
We solve \equ{pico1-new} by the method of characteristics.
For  definiteness of the characteristics, we
 also assume that $\nn_y \RR(y,t) $ is  log-Lipschitz in $y$ uniformly in $t$, that is,
\[
| \nn_y \RR(y_1,t) - \nn_y \RR(y_2,t) | \leq L |y_1-y_1| ( 1+ |\log|y_1-y_2| \, | )
\]
and continuous in its two variables, but no uniform estimate on the constant $L$ will for the moment be assumed ($L$ is allowed to depend on $\varepsilon$).

\medskip
The characteristic curve $\bar y(s;\tau,y)$  is by definition the solution
$\bar y (s)$ of the ODE system
\begin{align}
\label{characteristic1-new}
\left\{
\begin{aligned}
\frac{d \bar y }{d s} (s) & =
\nabla_y^\perp (\Gamma_0+\RR)(\bar y(s) , \varepsilon^2 s )
\\
 \bar y(\tau) &= y  .
\end{aligned}
\right.
\end{align}
This system has indeed a unique solution  defined on the entire interval $[0,\varepsilon^{-2} T]$, see for instance \cite{notes-krr}.
For a locally bounded function $E$, the unique solution of \equ{pico2} is then represented by the formula
\begin{align}
\label{phi}
\phi(y,\tau) =
\int_0^\tau E(\bar y(s;\tau,y) , \varepsilon^2 s)\,ds .
\end{align}

\begin{lemma}\label{transport1-new}
Let us assume the validity of $\equ{papa-new}$. Let $1\le p\le +\infty$ and $\alpha\in \R$. There exists a number $C>0$ such that
for any function  $E(y,t)$ that satisfies
\[
\sup_{t\in [0,T]}
\| (1+|\cdot |)^{-\alpha } E(\cdot , t)\|_{L^p(\R^2)}  \ <\ +\infty
\]
we have that for all sufficiently small $\ve$, the solution of $\equ{pico1-new}$ satisfies
$$
\sup_{t\in [0,T]}
\| (1+|\cdot |)^{-\alpha } \phi (\cdot , t)\|_{L^p(\R^2)} \ \le \
C \ve^{-2} \sup_{t\in [0,T]}
\| (1+|\cdot |)^{-\alpha } E(\cdot , t)\|_{L^p(\R^2)}  .
$$
\end{lemma}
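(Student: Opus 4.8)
The plan is to argue directly from the representation formula \eqref{phi}, working in the rescaled time $\tau=\ve^{-2}t\in[0,\ve^{-2}T]$. The crucial point is that, although the $s$-integral in \eqref{phi} runs over an interval of length $O(\ve^{-2})$, the characteristic flow $y\mapsto\bar y(s;\tau,y)$ displaces the weight $(1+|y|)$ only by a bounded, $\ve$-independent factor; the $\ve^{-2}$ will therefore enter the final bound exactly once, as the length of that integration interval. So the estimate splits into two ingredients: comparability of the weight along characteristics, and volume preservation of the flow.

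\emph{Step 1: the weight is comparable along characteristics.} I would first show that there is a constant $C_0=C_0(M,T)>0$, independent of $\ve$, with
\[
C_0^{-1}(1+|y|)\ \le\ 1+|\bar y(s;\tau,y)|\ \le\ C_0\,(1+|y|)\foral s,\tau\in[0,\ve^{-2}T],\ y\in\R^2 .
\]
Here one exploits that $\nn^\perp\Gamma_0$ is a \emph{rotational} field: since $\nn\Gamma_0(z)=-4z/(1+|z|^2)$ is radial, $z\cdot\nn^\perp\Gamma_0(z)=0$. Hence along a characteristic $\bar y(\cdot)=\bar y(\cdot;\tau,y)$ solving \eqref{characteristic1-new},
\[
\tfrac{d}{ds}|\bar y(s)|^2=2\,\bar y(s)\cdot\bigl(\nn^\perp\Gamma_0(\bar y(s))+\nn^\perp\RR(\bar y(s),\ve^2s)\bigr)=2\,\bar y(s)\cdot\nn^\perp\RR(\bar y(s),\ve^2s),
\]
so that $\bigl|\tfrac{d}{ds}|\bar y(s)|\bigr|\le|\nn_y\RR(\bar y(s),\ve^2s)|\le M\ve^2(1+|\bar y(s)|)$ by \eqref{papa-new} (near $\bar y=0$ one argues instead with $|\bar y|^2$ and upper Dini derivatives, which is routine). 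Writing $r(s)=1+|\bar y(s)|$, this reads $|r'(s)|\le M\ve^2 r(s)$, and Gronwall's inequality over an interval of length $|s-\tau|\le\ve^{-2}T$ yields $e^{-MT}r(\tau)\le r(s)\le e^{MT}r(\tau)$, i.e. the claim with $C_0=e^{MT}$.

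\emph{Steps 2--3: volume preservation and conclusion.} The transporting field $\nn^\perp(\Gamma_0+\RR)$ is divergence-free, so by Liouville's theorem the flow map $y\mapsto\bar y(s;\tau,y)$ is a measure-preserving $C^1$ diffeomorphism of $\R^2$ (existence and uniqueness being guaranteed by the log-Lipschitz hypothesis recalled before \eqref{characteristic1-new}); hence $\|h(\bar y(s;\tau,\cdot))\|_{L^p(\R^2)}=\|h\|_{L^p(\R^2)}$ for every $h\in L^p(\R^2)$. Now from \eqref{phi},
\[
(1+|y|)^{-\alpha}|\phi(y,\tau)|\ \le\ \int_0^\tau\frac{(1+|\bar y(s;\tau,y)|)^{\alpha}}{(1+|y|)^{\alpha}}\,(1+|\bar y(s;\tau,y)|)^{-\alpha}\,|E(\bar y(s;\tau,y),\ve^2s)|\,ds ,
\]
and by Step 1 the prefactor is $\le C_0^{|\alpha|}$. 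Taking the $L^p_y(\R^2)$ norm, using Minkowski's integral inequality and then the volume preservation,
\[
\|(1+|\cdot|)^{-\alpha}\phi(\cdot,\tau)\|_{L^p}\ \le\ C_0^{|\alpha|}\int_0^\tau\|(1+|\cdot|)^{-\alpha}E(\cdot,\ve^2s)\|_{L^p}\,ds\ \le\ C_0^{|\alpha|}\,\tau\,\sup_{t\in[0,T]}\|(1+|\cdot|)^{-\alpha}E(\cdot,t)\|_{L^p},
\]
and since $\tau\le\ve^{-2}T$ the lemma follows with $C=T\,C_0^{|\alpha|}=T\,e^{|\alpha|MT}$.

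The main obstacle is Step 1: one must use that $\nn^\perp\Gamma_0$ leaves $|\bar y|$ \emph{exactly} invariant, so that only the small perturbation $\RR$ drives the radial motion of the characteristics; the gain $\ve^2$ in \eqref{papa-new} then precisely cancels the length $\ve^{-2}T$ of the time interval in the Gronwall step, yielding $\ve$-independent comparability constants. Feeding the crude bound $|\nn^\perp(\Gamma_0+\RR)|\lesssim 1$ into Gronwall would instead produce an additive loss of order $\ve^{-2}$ in the weight and destroy the estimate.
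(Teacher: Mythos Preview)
Your proof is correct and follows essentially the same route as the paper: establish the $\ve$-independent comparability $(1+|\bar y(s;\tau,y)|)\sim(1+|y|)$ via $y\cdot\nn^\perp\Gamma_0(y)=0$, the bound \eqref{papa-new}, and Gronwall over $[0,\ve^{-2}T]$, then feed this into the representation formula \eqref{phi}. You are in fact more explicit than the paper about the $L^p$ step for $p<\infty$, spelling out Minkowski's integral inequality and area preservation where the paper simply says the estimate ``readily'' follows.

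One small inaccuracy: under the stated log-Lipschitz hypothesis on $\nn_y\RR$ the flow $y\mapsto\bar y(s;\tau,y)$ need not be a $C^1$ diffeomorphism, only a measure-preserving homeomorphism (Yudovich-type regularity). This does not affect your argument, since area preservation for divergence-free log-Lipschitz fields is standard and is all you actually use; just drop the ``$C^1$''.
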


%

\medskip

\noindent{\bf Gradient estimates.}
It is natural to think that an estimate for the gradient of $E(y,t)$ leads to such an estimate for $\phi(y,t)$.
This will be true under some additional assumptions on $\RR(y,t)$.
Let us assume that the function $\RR(y,t)$ satisfies the following bounds
\begin{align}
\nonumber
 |\RR(y,t)| +  |\partial_t \RR(y,t)|  \ \le  & \ C \varepsilon^2 (1+|y|^2),
\\
|\nn_y \RR(y,t)|+ |\partial_t \nn_y \RR(y,t)|  \ \le  & \ C \varepsilon^2 (1+|y|),
\nonumber\\
\label{pico3-new}
| D^2_y \RR(y,t)| \ \le  &\  C \varepsilon^2.
\end{align}
We find a corresponding estimate for the gradient. 
\begin{lemma}\label{transport3-new}
Let us assume that $\RR$ satisfies $\equ{pico3-new}$.
Then there exist numbers $C,\delta>0$ such that for all sufficiently small $\ve$  and any function $E(y,t)$ that satisfies for some $A,\alpha\in \R$
\[
(1+|y|) |\nn_y E (y,t)| + | E (y,t)| +|E_t(y,t)| \ \le \ A\, (1+ |y|)^\alpha ,
\]
the solution of $\equ{pico1-new}$ satisfies
\be\label{cotin1}
(1+|y|)|\nn_y \phi(y, t)    |  + |\phi(y, t)| \  \le\  C \ve^{-2} A (1+|y|)^\alpha \foral y\in \R^2, \ |y|< \delta\ve^{-1}
\ee
\end{lemma}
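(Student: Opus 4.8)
The plan is to differentiate the characteristic representation formula \equ{phi} and control the growth of $\nn_y \bar y(s;\tau,y)$ along characteristics. First I would write $\phi(y,\tau) = \int_0^\tau E(\bar y(s;\tau,y),\ve^2 s)\,ds$ and differentiate in $y$, obtaining $\nn_y\phi(y,\tau) = \int_0^\tau (\nn_y E)(\bar y(s),\ve^2 s)\, D_y \bar y(s;\tau,y)\,ds$, where $D_y\bar y$ is the Jacobian matrix of the flow. So the key quantity to estimate is $|D_y\bar y(s;\tau,y)|$, together with a weight comparison between $1+|\bar y(s)|$ and $1+|y|$. For the latter, I would use that $\Gamma_0$ generates a rotation-like flow (its level sets are circles, since $\nn^\perp\Gamma_0$ is tangent to circles centered at the origin), so the unperturbed flow preserves $|y|$ exactly; the perturbation $\RR$, by the bound $|\nn_y\RR|\le C\ve^2(1+|y|)$ in \equ{pico3-new}, changes $\frac{d}{ds}|\bar y|^2$ by at most $C\ve^2(1+|\bar y|^2)$, and a Gronwall argument over the time interval $[0,\ve^{-2}T]$ shows $1+|\bar y(s)| \le C(1+|y|)$ and $1+|y|\le C(1+|\bar y(s)|)$ uniformly, as long as we stay in the region $|y|<\delta\ve^{-1}$ with $\delta$ small (this is where the restriction in the statement comes from: large $|y|$ would let the exponential growth $e^{C\ve^2 s}\sim e^{CT}$ compound with the $(1+|y|)$ factor and spoil the comparison — actually one needs $\delta$ small to keep the accumulated perturbation subordinate).

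For the Jacobian $J(s):=D_y\bar y(s;\tau,y)$, I would differentiate the ODE system \equ{characteristic1-new} in the initial condition $y$, getting the variational equation $\frac{d}{ds}J(s) = D^2_y(\Gamma_0+\RR)^\perp(\bar y(s),\ve^2 s)\,J(s)$ with $J(\tau)=\mathrm{Id}$. Here $(\cdot)^\perp$ denotes the matrix obtained from the perpendicular gradient. The matrix $D^2\Gamma_0$ is bounded but \emph{not} small; however, because the unperturbed flow is a (variable-speed) rotation on each circle, the corresponding fundamental solution is bounded uniformly on $[0,\ve^{-2}T]$ — this is the one genuinely delicate point. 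I would either diagonalize in polar coordinates (the flow of $\nn^\perp\Gamma_0$ is $\theta \mapsto \theta + c(\rho)s$, $\rho$ fixed, so its linearization is an explicit shear plus rotation, hence polynomially bounded in $s$, in fact linearly) or invoke the explicit form $\nn^\perp\Gamma_0 = -\frac{4}{1+\rho^2}\,\partial_\theta$. The $D^2\RR$ contribution is $O(\ve^2)$ by \equ{pico3-new}, so over time $\ve^{-2}T$ it contributes a bounded exponential factor $e^{CT}$. Combining: $|J(s)| \le C$ uniformly for $s\in[0,\ve^{-2}T]$ (possibly with a harmless factor growing polynomially in $s$, which is still $O(\ve^{-N})$ but can be absorbed — more carefully, one shows it stays genuinely bounded for $|y|<\delta\ve^{-1}$).

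Putting the pieces together: from $\nn_y\phi(y,\tau)=\int_0^\tau (\nn_y E)(\bar y(s),\ve^2 s)J(s)\,ds$, the hypothesis $|\nn_y E(z,t)|\le A(1+|z|)^{\alpha-1}$, the weight comparison $1+|\bar y(s)|\sim 1+|y|$, and $|J(s)|\le C$, we get $|\nn_y\phi(y,\tau)|\le C A (1+|y|)^{\alpha-1}\cdot \tau \le CA\ve^{-2}(1+|y|)^{\alpha-1}$ since $\tau\le \ve^{-2}T$; multiplying by $(1+|y|)$ gives the claimed bound $(1+|y|)|\nn_y\phi|\le C\ve^{-2}A(1+|y|)^\alpha$. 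The bound on $|\phi|$ itself is exactly Lemma \ref{transport1-new} (or rather its pointwise $L^\infty$ version, which follows the same way from $|E(z,t)|\le A(1+|z|)^\alpha$, the weight comparison, and $\tau\le\ve^{-2}T$). The main obstacle is the uniform-in-$s$ bound on the Jacobian $J(s)$ over the long time interval $[0,\ve^{-2}T]$: naively Gronwall gives $e^{C\ve^{-2}T}$ which is useless, and the point is that the dominant term $D^2\Gamma_0$ generates a \emph{bounded} (rotation-type) fundamental solution, so only the small $O(\ve^2)$ perturbation feeds into the exponent, yielding $e^{CT}=O(1)$. The restriction $|y|<\delta\ve^{-1}$ with $\delta$ small is what makes the weight comparison and the perturbative estimates close.
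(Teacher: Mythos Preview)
Your argument has a genuine gap at the Jacobian estimate. The claim that $|J(s)|\le C$ uniformly on $[0,\ve^{-2}T]$ is false. You yourself observe that the linearization of the unperturbed flow $\theta\mapsto\theta+c(\rho)s$, $\rho$ fixed, is a shear: the component $\partial_\rho\theta(s)=c'(\rho)s$ grows linearly in $s$, and for $|y|$ of order $1$ this gives $|D_y\bar y(s)|\sim s$, hence $|D_y\bar y(s)|\lesssim \ve^{-2}$ over the full time interval. Plugging this into your integral yields $|\nn_y\phi|\lesssim A\ve^{-2}\cdot\ve^{-2}=A\ve^{-4}$, which is $\ve^{-2}$ worse than required. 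Your parenthetical ``more carefully, one shows it stays genuinely bounded for $|y|<\delta\ve^{-1}$'' is not correct: the restriction $|y|<\delta\ve^{-1}$ only keeps the perturbation $\RR$ subordinate; it does nothing to tame the shear coming from $\Gamma_0$ itself, which is worst for \emph{small} $|y|$. A telling sign is that you never use the hypothesis on $|E_t|$.

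The paper's proof starts the same way but then works in polar coordinates $\bar y(s)=\rho(s)e^{i\theta(s)}$ and splits $\pp_{y_i}\phi$ into a radial piece $I(\tau)=\int_0^\tau(\nn E\cdot\hat\rho)\,\rho_{y_i}\,ds$ and an angular piece $II(\tau)=\int_0^\tau(\nn E\cdot\hat\theta)\,\rho\,\theta_{y_i}\,ds$. The radial derivative $\rho_{y_i}$ obeys a much better bound, obtained by differentiating in $y_i$ the approximate conservation law for $H(\bar y(s),s)=\Gamma_0(\bar y)+\RR(\bar y,\ve^2 s)$; this gives $|\rho_{y_i}|/\rho^2\lesssim \ve^2\alpha(s)+\ve^4\int_s^\tau\alpha+|y|^{-2}$ and, after Gronwall, $\alpha(s):=|\rho_{y_i}|+\rho|\theta_{y_i}|\le C\big(1+(\tau-s)/|y|^2\big)$. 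So $I(\tau)$ is already of the right size. For $II(\tau)$, where the shear growth sits, the paper uses the identity
\[
\frac{d}{ds}E(\bar y(s),\ve^2 s)=(\nn E\cdot\hat\rho)\,\dot\rho+(\nn E\cdot\hat\theta)\,\rho\,\dot\theta+\ve^2 E_t,
\]
divides by $\dot\theta$ (which is $\sim (1+\rho^2)^{-1}$ and bounded below on $|y|<\delta\ve^{-1}$), and integrates by parts in $s$. The resulting boundary terms and the integral involving $\dot\gamma$, where $\gamma=\theta_{y_i}/\dot\theta$, are controlled using the estimates $\dot\rho=O(\ve^2\rho)$, $\ddot\theta=O(\ve^2\rho^{-2})$, $\dot\theta_{y_i}=O(\rho^{-3})$ derived from the ODE. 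This is precisely where the bound on $E_t$ is needed. The integration-by-parts trick, not a direct Jacobian bound, is the missing idea.
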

For the proof see \S~\ref{secProofTransportInnerGradient}.


\subsection{Second improvement of the approximation}
We let $\bar\phi^{2} (y,t;\xi)$  be the solution of  Problem \equ{pico0}. Thanks to Lemmas~\ref{transport1-new} and \ref{transport3-new},  $\bar\phi^{2} $ satisfies
$$
|\bar\phi^{2} (y,t)|  + (1+ |y|) |\nn_y\bar\phi^{2} (y,t)| \ \le\ C \frac{\ve^3}{ 1+|y|}.
$$
We remark that the hypothesis \eqref{pico3-new} holds since at the functions involved in the definition \eqref{defRa1}  are smooth and of the correct order in $\varepsilon$.

\medskip

Let us consider the unique solution $\bar\psi^{2}(y,t)$ of
$$
\begin{aligned}
-\Delta_y \bar\psi^{2}  = \bar\phi^{2} \inn B_{4R}, \quad
\bar\psi^{2}  = 0 \onn \pp B_{4R} .
\end{aligned}
$$
By standard elliptic theory, we get the following estimate in  $\bar\psi^{2}$.
$$
|\bar\psi^{2} (y,t)|  + (1+ |y|) |\nn_y\bar\psi^{2} (y,t)|+ (1+ |y|^2) |D^2_y\bar\psi^{2} (y,t)| \le C\ve^3\,(1+|y|)\,\log\frac {16R}{1+|y|} .
$$

Introducing back the subindex $j$ we then let
\be\label{barphi}\begin{aligned}
\bar \phi_j(y,t;\xi) = & \bar\phi^1_j(y,t;\xi)+ \bar\phi^2_j(y,t;\xi) , \\ \bar \psi_j(y,t;\xi) = & \bar\psi^1_j(y,t;\xi)+ \bar\psi^2_j(y,t;\xi) .
\end{aligned}
\ee
We compute the associated error
\begin{align}
\kappa_j ^{-1} E_{0j} (\bar\phi_j , \bar\psi_j, 0;\xi)
\nonumber &  =
\kappa_j ^{-1} E_{0j} (\bar\phi^1_j, \bar\psi^1_j, 0 ;\xi)
- \bar E_j^{3}(\cdot;\xi^0)
\, -\, \ve \dot{\ttt\xi}\cdot   \nabla \bar\phi^2_j
\\
\nonumber
& \quad + \nn^\perp \bar\psi^2_j\cdot \nn U_0
+   \nabla^\perp \bar \psi^1_j \cdot \nabla \bar \phi^2_j
\\
& \quad +   \nabla^\perp \bar \psi^2_j \cdot \nabla \bar \phi_j^1 + \nabla^\perp \bar \psi_j^2 \cdot \nabla \bar \phi_j^2 .
\label{exp3}
\end{align}
The terms in the last two rows of the above expansion are all of the order $O(\ve^5\rho^{-3}\log\ve )$. Observe also that
$\ve \dot{\ttt\xi} \cdot   \nabla \bar\phi^2 = O (\ve^{6+\sigma}\rho^{-2})$.
Therefore from \eqref{exp3} we get
\[
\kappa_j ^{-1} E_{0j} (\bar\phi_j , \bar\psi_j, 0;\xi) = \kappa_j ^{-1} E_{0j} (\bar\phi^1_j, \bar\psi^1_j, 0 ;\xi) - \bar E_j^3(\cdot;\xi^0)  + \nn^\perp \bar\psi^2_j\cdot \nn U_0+ O(\ve^5\rho^{-3}\log\ve   ).
\]

\begin{remark}
{\em
For a later step in the construction it is useful to point out that the directional derivative $\pp_\xi \phi_{j} $ given by
$$
\pp_\xi \bar \phi_j(y,t ,\xi) [\zeta]\, :=\,  \frac {d}{d s} \bar\phi_j(y,t, \xi + s\zeta)\big |_{s=0}
$$
has uniform bounds. Indeed, we recall that $\bar \phi_j = \bar\phi_j^1 +\bar\phi_j^2 $. From the definition of $\bar\phi_j^1$ we clearly have
$$
\big| \pp _\xi \bar \phi^1_j(y,t,\xi) [\zeta]\big| \ \le  \  C\frac{\ve^2}{1+|y|^{2}} \|\zeta\|_{C^1[0,T]},
$$
while $\Phi (y,t) =  \pp _\xi \bar \phi^2_j(y,t,\xi) [\zeta]$ solves the transport equation
$$
\left\{
\begin{aligned}
\ve^2 \Phi_t
+ \nabla_y^\perp(\Gamma_0(y) + \ttt \RR_j^0( y,t;\xi) ) \cdot \nabla_y \Phi + E(y,t)
& = 0 ,
\quad \text{in } \R^2\times [0,T]
\\
\phi(y,0) & =0 , \quad \text{in } \R^2
\end{aligned}
\right.
$$
where
$$
 E (y,t) = \nabla_y^\perp( \pp_\xi \ttt \RR_j^0( y,t;\xi)[\zeta] ) \cdot \nabla_y \bar \phi^1_j(y,t; \xi)\ = \ O(\ve^5 \rho^{-1})\,\|\zeta\|_{C^1[0,T]}.
$$
Using Lemma \ref{transport1-new} we find that
\[
\pp _\xi \bar \phi^2_j(y,t,\xi) [\zeta]
=
\Phi (y,t) = O(\ve^3 \rho^{-1})\, \|\zeta\|_{C^1[0,T]} .
\]
As a result we get
\[
\big| \pp _\xi \bar \phi_j(y,t;\xi) [\zeta]\big| \ \le  \  C\frac{\ve^2}{1+|y|^{2}} \|\zeta\|_{C^1[0,T]}
\]
The previous estimates and the definition of $\bar \psi^1_j$ lead to
\begin{align*}
\nonumber
& \big| \pp _\xi \bar \psi^1_j(y,t;\xi) [\zeta]\big|
+ (1+ |y|) \big|\nn_y \partial_\xi \bar \psi^1_j(y,t;\xi) [\zeta]\big|
+ (1+ |y|^2) \big| D^2_y \partial_\xi \bar \psi^1_j(y,t;\xi) [\zeta]\big|
\\
& \quad \leq
C\ve^2\,
\big (\,\|\zeta\|_\infty + \|\dot\zeta\|_\infty\big) .
\end{align*}
Similarly, for  $\bar \psi^2$, we get
\begin{align*}
& \big| \pp _\xi \bar \psi^2_j(y,t;\xi) [\zeta]\big|
+ (1+ |y|) \big|\nn_y \partial_\xi \bar \psi^2_j(y,t;\xi) [\zeta]\big|
+ (1+ |y|^2) \big| D^2_y \partial_\xi \bar \psi^2_j(y,t;\xi) [\zeta]\big|
\\
& \quad \leq
C\ve^3\,(1+|y|)\,\log\frac {16R}{1+|y|}
 \big (\,\|\zeta\|_\infty + \|\dot\zeta\|_\infty\big) .
\end{align*}
Therefore, for $\bar \psi_j = \bar\psi_j^1 + \bar\psi_j^2$, we get
\begin{align}
\nonumber
& \big| \pp _\xi \bar \psi_j(y,t;\xi) [\zeta]\big|
+ (1+ |y|) \big|\nn_y \partial_\xi \bar \psi_j(y,t;\xi) [\zeta]\big|
+ (1+ |y|^2) \big| D^2_y \partial_\xi \bar \psi_j(y,t;\xi) [\zeta]\big|
\\
\label{derBarPsij}
& \quad \leq
C\ve^2 \,\log\frac {16R}{1+|y|}
\big (\,\|\zeta\|_\infty + \|\dot\zeta\|_\infty\big) .
\end{align}

}

\end{remark}

Next we shall improve the approximation error in the outer problem, considering the values \equ{barphi} for the functions $\phi_j$, $\psi_j$.

\subsection{The outer approximation}
\label{secOuter}
We will improve the outer errors
\be\label{E0}
\begin{aligned}
  E_0(x,t;\xi) \,  &:= \, E_0^{out}(\bar\phi^{in}, \bar\psi^{in},0,0,\xi )\, \\
& =   \ve^{-2}  \sum_{j=1}^N \kappa_j\, \pp_t\eta_{1j}\bar \phi_j  -\,\ve^{-2} \sum_{j=1}^N  \kappa_j(1-\eta_{1j} )  \ve^{-1}  \dot{\xi}_j \cdot  \nn_x U_{0j}
\\
& \quad +  \ve^{-2}  \sum_{j=1}^N \kappa_j\, [ \bar \phi_j  \nn_x\eta_{1j} +   (1-\eta_{1j})  \nn_x U_{0j})\, ]\, \cdot   \nn^\perp \Psi_0  , \\
&= O (   \ve^2 )\, , \\
E_{20}(x,t;\xi) \,  &:= \, E_{20}^{out}( \bar\psi^{in},0,0,\xi )  \\
 &:= \, \sum_{j=1}^N \kappa_j\,\big [\,\bar \psi_j\Delta_x\eta_{2j} + 2\nn_x \eta_{2j}\cdot \nn_x \bar \psi_j \, \big ] \,
 \\ &= O (   \ve^2 )\, .
\end{aligned}
\ee
First we consider the solution of the elliptic equation
\begin{align}
\label{psi1out}
\Delta_x\psi^{out}_1(\cdot ;\xi)   = 0 \inn \Omega\times [0,T],\quad
\psi_1^{out} = - \Psi_0 (\cdot, \xi) \onn \pp\Omega\times [0,T].
\end{align}
which we readily see is of size $O(\ve^2)$ in $C^2$-topology uniformly in $t$.
Let us consider the transport equation

\be
\left \{
\begin{aligned}
  \phi^{out}_t    +\,  \nn^\perp _x (\Psi_0  +\psi_1^{out})  \cdot \nn \phi^{out}  + E_0(x,t;\xi)\ = &\ 0 \inn \Omega \times [0,T],  \\
  \phi(\cdot ,0)\ = &\ 0 \inn \Omega.\end{aligned}\right.
  \label{pico5} \ee
We solve \equ{pico5} by considering the transport equation in a slightly more general setting and state some preliminary results that we will use to conclude the construction and also later.

 \subsection{The outer transport equation}
 We consider a transport equation of the form
\begin{align}
\left\{
\begin{aligned}
\phi_t  + \nn_x^\perp(\Psi_0(\cdot;\xi) + \QQ )\cdot \nn \phi  =  &E(x,t)
\quad \text{in } \Omega\times [0,T],
\\
\phi(x,0) & =0 , \quad \text{in } \Omega
\end{aligned}
\right.
\label{pico3}\end{align}
We assume that the function  $\nn_x \QQ(x,t)$ is continuous and log-Lipschitz in $x$ uniformly in $t$, and that $E\in L^\infty(\Omega\times [0,T])$.
In addition, we assume that
\be\label{cero}
\Psi_0(x;\xi(t)) + \QQ(x,t) = 0\foral (x,t)\in \pp\Omega \times [0,T].
\ee
Again we can represent the solution of \equ{pico3} by means of Duhamel's principle
\be \label{phi3}
\phi(x,t)  =  \int_0^t  E( \bar x(s; t, x) ,  s)\, ds
\ee
where the characteristics  $\bar x(s)= \bar x(s; t, x) $ correspond to the solution of
\begin{align*}
\left\{
\begin{aligned}
\frac{d \bar x }{d s} (s) & =
\nabla_x^\perp (\Psi_0 + \QQ)(\bar x(s) , s ), \quad s\in [0,t],
\\
 \bar x(t) &= x  ,
\end{aligned}
\right.
\end{align*}
which exist and are unique

\begin{lemma}\label{int1}
Under the above assumptions we have that for $x\in \Omega$ the characteristics satisfy
$\bar x(s; t, x)\in \Omega$ for all $0\le s\le t$.  The solution of $\equ{pico3}$ given by $\equ{phi3}$ satisfies for any $1\le p\le +\infty$,
\begin{equation*}
\|\phi(\cdot ,t)\|_{L^p(\Omega)}  \le     t\sup_{0\le s\le t} \|E(\cdot, s)\|_{L^p (\Omega)} .
\end{equation*}
\end{lemma}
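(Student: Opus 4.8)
The plan is to exploit two structural features of the drift field $v(x,t):=\nabla_x^\perp(\Psi_0(x;\xi(t))+\QQ(x,t))$: it is tangent to $\pp\Omega$, and it is divergence free. With these in hand the estimate is a combination of Duhamel's formula \equ{phi3}, Minkowski's integral inequality, and a change of variables.

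\emph{Step 1: invariance of $\Omega$.} By the boundary condition \equ{cero}, the function $\Psi_0+\QQ$ vanishes identically on $\pp\Omega\times[0,T]$, so at each boundary point its spatial gradient is parallel to the outer unit normal $\nu$; hence $v=\nabla^\perp(\Psi_0+\QQ)$ is tangent to $\pp\Omega$. Since $\nn_x(\Psi_0+\QQ)$ is continuous and log-Lipschitz in $x$ uniformly in $t$, the characteristic ODE has unique solutions by Osgood's criterion, so distinct characteristics never cross. Combining tangency with uniqueness, $\pp\Omega$ is invariant under the flow, and because $\bar\Omega$ is compact the trajectory issued backward from any $x\in\Omega$ exists on all of $[0,t]$ and cannot reach $\pp\Omega$; thus $\bar x(s;t,x)\in\Omega$ for $0\le s\le t$.

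\emph{Step 2: the flow preserves Lebesgue measure.} Since $v=\nabla^\perp(\cdot)$ we have $\div_x v\equiv 0$. For a $C^1$ field Liouville's formula would immediately give that $x\mapsto\bar x(s;t,x)$ is a measure-preserving diffeomorphism of $\Omega$; here $v$ is only log-Lipschitz, so I would argue by approximation. Mollifying $v$ in $x$ yields smooth, divergence-free fields $v_n$, tangent to $\pp\Omega$, converging to $v$ uniformly on $\bar\Omega\times[0,T]$ with a uniform log-Lipschitz bound; their flows $\bar x_n(s;t,\cdot)$ are measure-preserving diffeomorphisms of $\Omega$, and the standard Osgood stability estimate for ODEs gives $\bar x_n\to\bar x$ uniformly. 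Passing to the limit, $x\mapsto\bar x(s;t,x)$ is a homeomorphism of $\Omega$ that pushes Lebesgue measure forward to itself. (Alternatively one may invoke the classical fact that a bounded, divergence-free, log-Lipschitz planar field generates a measure-preserving flow, as in Yudovich's theory.)

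\emph{Step 3: the $L^p$ bound.} For $p=\infty$ the estimate is immediate from \equ{phi3}, since $|\phi(x,t)|\le\int_0^t|E(\bar x(s;t,x),s)|\,ds\le t\sup_{0\le s\le t}\|E(\cdot,s)\|_{L^\infty(\Omega)}$. For $1\le p<\infty$, apply Minkowski's integral inequality to \equ{phi3} to get $\|\phi(\cdot,t)\|_{L^p(\Omega)}\le\int_0^t\|E(\bar x(s;t,\cdot),s)\|_{L^p(\Omega)}\,ds$; by Step 2 each inner norm equals $\|E(\cdot,s)\|_{L^p(\Omega)}$, and bounding the $s$-integral by $t$ times the supremum yields the claim. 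The only genuinely delicate point is Step 2 — establishing measure preservation for a flow whose velocity is merely log-Lipschitz rather than Lipschitz — which is why I would route it through smooth approximation together with the Osgood-type stability of characteristics, both available in the present setting; everything else is routine.
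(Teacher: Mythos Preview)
Your proof is correct and follows essentially the same route as the paper: invariance of $\Omega$ via the boundary condition \equ{cero}, then area preservation of the characteristic flow combined with the Duhamel formula to get the $L^p$ estimate. The paper is terser---it simply asserts that $x\mapsto\bar x(s;t,x)$ is area-preserving and omits the Minkowski step---whereas you spell out the mollification/Osgood-stability argument for measure preservation; your tangency argument for Step~1 is also a bit cleaner than the paper's version.
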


\medskip
The context that we need to consider in the outer problem \equ{pico3} is that of a right hand side $E$ that its supported away from the vortices $\xi_j(t)$. Let us assume that for some fixed number $\delta$ we have
\be
E\equiv 0  \inn  \Big \{ (x,t) \in \Omega \times [0,T] \ /\  x\in  \bigcup_{j=1}^N B_\delta (\xi_j^0(t))   \Big \}.
\label{assE}\ee
We make the following assumption on the perturbation term $\QQ$. For a constant $M>0$ we have
\be
|\nn \QQ(x,t)| \ \le \  M \sum_{j=1}^N (|x-\xi_j(t)|+\ve) \foral (x,t) \in \Omega\times [0,T].
\label{assQ}\ee

\begin{lemma} \label{transport5}
Assume that $\equ{assQ}$ holds. Then
there exists a number $\beta>0$ independent of  $\delta$ and $\ve $ such that if $E$ satisfies $\equ{assE}$ then the solution of $\equ{pico3}$
satisfies
\[
\phi \equiv 0  \inn  \Big \{ (x,t) \in \Omega \times [0,T] \ /\  x\in  \bigcup_{j=1}^N B_{\beta \delta} (\xi_j^0(t))   \Big \}.
\]
\end{lemma}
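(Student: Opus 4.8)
The plan is to track the characteristic curves $\bar x(s;t,x)$ backwards from a point $x$ near some vortex $\xi_k^0(t)$ and show they cannot escape a slightly larger ball before time $s=0$; combined with the assumption $\equ{assE}$ and the Duhamel representation $\equ{phi3}$, this forces $\phi(x,t)=0$. Fix $k$ and fix $(x,t)$ with $|x-\xi_k^0(t)|<\beta\delta$, where $\beta$ is to be chosen small but independent of $\delta,\ve$. Introduce the "distance to the $k$-th vortex" function $d(s) := |\bar x(s;t,x) - \xi_k^0(s)|$. Differentiating along the characteristic flow gives
\[
\frac{d}{ds} d(s)^2 = 2\big(\bar x(s) - \xi_k^0(s)\big)\cdot\big[\nn_x^\perp(\Psi_0+\QQ)(\bar x(s),s) - \dot\xi_k^0(s)\big].
\]
The idea is that near $\xi_k^0$ the dominant part of $\nn_x^\perp\Psi_0$ is $\kappa_k\nn_y^\perp\Gamma_0$ evaluated at $y=(\bar x-\xi_k)/\ve$, which is radial in $(\bar x-\xi_k)$ and hence essentially orthogonal to $\bar x-\xi_k$; so its contribution to the radial derivative is small, while the remaining pieces — the regular part of $\Psi_0$ (i.e.\ $-\kappa_k H + \sum_{i\ne k}\kappa_i G$, which is $C^1$ near $\xi_k^0$), the perturbation $\QQ$ controlled by $\equ{assQ}$, and the velocity $\dot\xi_k^0$ — are all bounded by a constant times $(d(s)+\ve)$. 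That yields a differential inequality of Gronwall type,
\[
\Big|\frac{d}{ds}\, d(s)\Big| \ \le\ C\,(d(s)+\ve),
\]
for a constant $C$ depending only on $T$, $\Omega$, the $\kappa_j$, $M$, and the fixed separation of the $\xi_j^0$, but not on $\delta$ or $\ve$.

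From this inequality, Gronwall gives $d(s) + \ve \le e^{C T}\,(d(t)+\ve)$ for all $s\in[0,t]$. Since $d(t) = |x-\xi_k^0(t)| < \beta\delta$ and (for $\ve$ small) $\ve < \beta\delta$, we obtain $d(s) < 2e^{CT}\beta\delta$ for all $s\in[0,t]$. Choosing $\beta := \tfrac14 e^{-CT}$ (note this depends only on $C$, hence is independent of $\delta$ and $\ve$) guarantees $d(s) < \tfrac12\delta$ for all $s\in[0,t]$; in particular $\bar x(s;t,x)\in B_\delta(\xi_k^0(s))$ for every $s\in[0,t]$. By $\equ{assE}$, $E(\bar x(s;t,x),s)=0$ for all such $s$, and the representation formula $\equ{phi3}$ gives $\phi(x,t)=0$. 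Since $k$ and $(x,t)$ with $x\in B_{\beta\delta}(\xi_k^0(t))$ were arbitrary, the claim follows. One should also verify the easy case where $t$ is such that the whole backward trajectory stays in $B_\delta$, as well as the case where, although we only need existence and uniqueness of characteristics (already guaranteed by the log-Lipschitz hypothesis and Lemma~\ref{int1}), they remain in $\Omega$ — which again is part of Lemma~\ref{int1}.

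The main obstacle, and the only place requiring genuine care, is the bound on the radial component of $\nn_x^\perp\Psi_0$ near $\xi_k^0$. Writing $\Psi_0(x;\xi) = \kappa_k[\Gamma_0((x-\xi_k)/\ve) - H(x,\xi_k) - \log 8\ve^2] + \sum_{i\ne k}\kappa_i[\cdots]$, the term $\nn_x^\perp\Gamma_0((x-\xi_k)/\ve) = \ve^{-1}\nn_y^\perp\Gamma_0(y)$ is exactly perpendicular to $y=(x-\xi_k)/\ve$, hence perpendicular to $x-\xi_k^0$ up to the error $\xi_k - \xi_k^0 = \xi^1 + \ttt\xi = O(\ve^2|\log\ve|)$; so $(\bar x-\xi_k^0)\cdot\nn_x^\perp\Gamma_0$ is $O(\ve^{-1}\cdot\ve^2|\log\ve|\cdot|\nn\Gamma_0|)$, and since $|\nn_y\Gamma_0(y)|\lesssim (1+|y|)^{-1}\lesssim \ve/(|x-\xi_k^0|+\ve)$ in the regime $|x-\xi_k^0|\lesssim\delta$, this contribution is $O(\ve^2|\log\ve|)$, comfortably absorbed. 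Everything else ($H$, $G(\cdot,\xi_i)$ for $i\ne k$, $\QQ$, $\dot\xi_k^0$) is Lipschitz in $x$ near $\xi_k^0$ with constants uniform in $\ve$ and $\delta$, giving the $C(d(s)+\ve)$ bound. A subtlety worth flagging: the constant $C$ must not depend on $\delta$; this is fine because all the Lipschitz bounds are taken on the fixed neighborhoods $\bigcup_j B_{\delta_0}(\xi_j^0(t))$ for a fixed $\delta_0$ larger than any $\delta$ we use, and $\beta\delta<\delta_0$ as long as $\delta$ itself is bounded, which we may assume.
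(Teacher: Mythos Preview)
Your approach is morally correct and is the same radial--Gronwall idea the paper uses, but carried out in different coordinates; there is, however, one arithmetic slip worth flagging.

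The paper's proof is a two--line reduction: change variables to $y=(x-\xi_j(t))/\ve$, observe that the outer transport equation \equ{pico3} then becomes an inner transport equation of the type \equ{pico1-new} with a perturbation $\RR$ satisfying \equ{papa-new}, and invoke the already--established Lemma~\ref{transport2} (whose proof is exactly the bound \equ{bounds1} on $|\bar y(s)|$). The key gain from this choice of centering is that the dominant piece $\kappa_j\ve^{-1}\nn_y^\perp\Gamma_0(y)$ is \emph{exactly} tangential to circles about the origin, so it drops out of the radial balance entirely.

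You instead track $d(s)=|\bar x(s)-\xi_k^0(s)|$, centering at the limiting vortex $\xi_k^0$ rather than at the actual center $\xi_k$ of the profile. Because of this offset the cancellation $(\bar x-\xi_k)\cdot\nn_y^\perp\Gamma_0=0$ leaves a residual $(\xi_k-\xi_k^0)\cdot\ve^{-1}\nn_y^\perp\Gamma_0(y)$. Your own chain of inequalities gives this residual as
\[
O\!\left(\ve^{-1}\cdot\ve^2|\log\ve|\cdot\frac{\ve}{d+\ve}\right)=O\!\left(\frac{\ve^2|\log\ve|}{d+\ve}\right),
\]
\emph{not} $O(\ve^2|\log\ve|)$ as you then write; you have silently dropped the factor $(d+\ve)^{-1}$. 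For $d$ of order $\ve$ this is $O(\ve|\log\ve|)$, which is too large to be absorbed into the claimed inequality $|d'|\le C(d+\ve)$ (the contribution to $|d'|$ would be $\ve|\log\ve|/d\sim|\log\ve|$). The fix is immediate and brings you back to the paper's argument: track $\tilde d(s)=|\bar x(s)-\xi_k(s)|$ instead, so that the $\Gamma_0$ term vanishes exactly, and pass to $d$ only at the end via $|d-\tilde d|\le|\xi_k-\xi_k^0|=O(\ve^2|\log\ve|)\ll\delta$. Alternatively, a continuity argument confining attention to the range $d\in[\beta\delta,\delta]$ already makes the residual harmless.
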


Next we will get a gradient estimate under some further assumptions.
Let us assume that the function $\QQ$ satisfies
\be\label{MM}
 |D^2_x \QQ(x,t) | 
 \le M \foral (x,t) \in \Omega \times [0,T].
\ee

\begin{lemma}\label{transport4}
Let us assume that $\QQ(x,t)$ satisfies $\equ{assQ}$ and  $\equ{MM}$.
There exists a constant $C>0$ such that for any $E$ satisfying $\equ{assE}$
and
$$
|\nn_x E(x,t) | + |E(x,t) |  \le A \foral (x,t) \in \Omega \times [0,T],
$$
 the solution of $\equ{pico3}$ satisfies
the estimate
\be\label{poto1} |\nn_x \phi(x,t)| + |\phi_t(x,t)|  + |\phi(x,t) |\ \le\ C\, A. \ee
\end{lemma}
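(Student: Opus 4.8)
The plan is to run the method of characteristics of $\equ{phi3}$ one derivative further: bound $\nabla_x\phi$ through the Jacobian of the flow, and bound $\phi_t$ through the equation itself. For fixed $\ve>0$ all coefficients in $\equ{pico3}$ are smooth in $x$, so the characteristic $\bar x(s;t,x)$ is $C^1$ in $x$, with $J(s):=\partial_x\bar x(s;t,x)$ solving the variational system
\[
J'(\sigma)=D_x\big[\nabla_x^\perp(\Psi_0(\cdot;\xi)+\QQ)\big](\bar x(\sigma;t,x),\sigma)\,J(\sigma),\qquad J(t)=I,
\]
so that $\|J(s)\|\le\exp\!\big(\int_s^t\|D^2_x(\Psi_0(\cdot;\xi)+\QQ)(\bar x(\sigma;t,x),\sigma)\|\,d\sigma\big)$ since $\|D_x\nabla^\perp_x f\|=\|D^2_x f\|$. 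Differentiating $\equ{phi3}$ gives
\[
\nabla_x\phi(x,t)=\int_0^t J(s)^{T}\,\nabla_x E(\bar x(s;t,x),s)\,ds .
\]
The difficulty, which I expect to be the main obstacle, is that $D^2_x\Psi_0$ is as large as $\ve^{-2}$ near the vortices, so a naive Gronwall bound on all of $[0,t]$ would produce an $\ve$-dependent (indeed exponentially large) constant. This is exactly where the support hypothesis $\equ{assE}$ and the confinement of characteristics from Lemma \ref{transport5} come in.

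Here is the point. The integrand above is nonzero only for those $s$ with $\bar x(s;t,x)\notin\bigcup_j B_\delta(\xi^0_j(s))$, since $\equ{assE}$ forces $E$, hence $\nabla_x E$, to vanish on the open set $\bigcup_j B_\delta(\xi^0_j(s))$. I claim that for such $s$ the whole arc $\{\bar x(\sigma;t,x):\sigma\in[s,t]\}$ avoids $\bigcup_j B_{\beta\delta}(\xi^0_j(\sigma))$, where $\beta$ is the constant of Lemma \ref{transport5}. Indeed, fix $\sigma\in[s,t]$, set $z=\bar x(\sigma;t,x)$; by the flow property $\bar x(s;\sigma,z)=\bar x(s;t,x)\notin\bigcup_j B_\delta(\xi^0_j(s))$, while the confinement argument in the proof of Lemma \ref{transport5}, which holds verbatim with $\sigma$ in place of the final time $t$, says that if $z$ belonged to $\bigcup_j B_{\beta\delta}(\xi^0_j(\sigma))$ then $\bar x(s;\sigma,z)$ would lie in $\bigcup_j B_\delta(\xi^0_j(s))$; hence $z\notin\bigcup_j B_{\beta\delta}(\xi^0_j(\sigma))$. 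On that region, using $\equ{MM}$, the smoothness of $H$, the bound $|D^2_y\Gamma_0(y)|\le C(1+|y|^2)^{-1}$, and the fact that $|x-\xi_j(t)|\ge\tfrac12\beta\delta$ whenever $|x-\xi^0_j(t)|\ge\beta\delta$ (valid for $\ve$ small since $\xi(t)=\xi^0(t)+O(\ve^2|\log\ve|)$),
\[
\|D^2_x(\Psi_0(\cdot;\xi)+\QQ)(\bar x(\sigma),\sigma)\|\ \le\ \sum_{j=1}^N|\kappa_j|\,\frac{C}{\ve^2+|\bar x(\sigma)-\xi_j(\sigma)|^2}+C+M\ \le\ \frac{C}{(\beta\delta)^2}+C+M\ =:\ \Lambda ,
\]
a constant independent of $\ve$. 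Consequently $\|J(s)\|\le e^{\Lambda(t-s)}\le e^{\Lambda T}$ for every $s$ in the support of $s\mapsto\nabla_x E(\bar x(s;t,x),s)$, and therefore $|\nabla_x\phi(x,t)|\le TA\,e^{\Lambda T}=:CA$, with $C$ depending only on $\delta,M,T,\Omega,(\kappa_j),\beta$.

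It remains to control $|\phi|$ and $|\phi_t|$. From $\equ{phi3}$, $|\phi(x,t)|\le\int_0^t|E(\bar x(s;t,x),s)|\,ds\le TA$, which is Lemma \ref{int1} with $p=\infty$. For $\phi_t$ we distinguish two cases. If $|x-\xi^0_j(t)|<\beta\delta$ for some $j$, then $\phi$ vanishes on a space-time neighbourhood of $(x,t)$ by Lemma \ref{transport5}, so $\phi_t(x,t)=0$. If $|x-\xi^0_j(t)|\ge\beta\delta$ for all $j$, then $|\nabla_x^\perp\Gamma_0((x-\xi_j(t))/\ve)|=4|x-\xi_j(t)|\,(\ve^2+|x-\xi_j(t)|^2)^{-1}\le C/(\beta\delta)$ and, by $\equ{assQ}$, $|\nabla_x\QQ(x,t)|\le M\sum_j(|x-\xi_j(t)|+\ve)\le C$, so $\equ{pico3}$ gives
\[
|\phi_t(x,t)|=\big|E(x,t)-\nabla_x^\perp(\Psi_0(\cdot;\xi)+\QQ)(x,t)\cdot\nabla_x\phi(x,t)\big|\ \le\ A+\Big(\tfrac{C}{\beta\delta}+C\Big)CA\ \le\ CA .
\]
Adding the three bounds yields $\equ{poto1}$. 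Thus the only genuinely delicate ingredient is the $\ve$-uniform Jacobian bound of the previous paragraph, which rests on the fact that wherever the integrand defining $\nabla_x\phi$ is nonzero, the relevant piece of the characteristic is trapped in a region where $D^2_x\Psi_0=O((\beta\delta)^{-2})$; the same confinement (via Lemma \ref{transport5}) is what removes the otherwise singular term $\nabla^\perp(\Psi_0+\QQ)\cdot\nabla\phi$ from the bound for $\phi_t$.
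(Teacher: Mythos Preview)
Your proof is correct and reaches the same conclusion, but by a different route than the paper. The paper's argument is: since Lemma~\ref{transport5} gives $\phi\equiv 0$ on $\bigcup_j B_{\beta\delta}(\xi^0_j(t))$, the function $\phi$ equally solves the \emph{modified} transport equation
\[
\phi_t + \nabla_x^\perp H \cdot \nabla_x \phi = E, \qquad H(x,t) = \Big(1-\sum_{j=1}^N \chi_j(x,t)\Big)\Psi_0(x;\xi(t)) + \QQ(x,t),
\]
where each $\chi_j$ is a smooth cut-off equal to $1$ near $\xi_j(t)$ and supported in $B_{\beta\delta}(\xi_j(t))$. The point is that $D^2_x H$ is now bounded \emph{uniformly on all of} $\Omega\times[0,T]$, so a single global Gronwall bound on the Jacobian of the new characteristics gives $|D_x\bar x(s;t,x)|\le C$ for every $(s,t,x)$, and the gradient formula yields \equ{poto1} at once.

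You instead keep the original singular field and argue pathwise: for each $s$ that contributes to the integral defining $\nabla_x\phi$ (namely those with $\bar x(s;t,x)$ outside $\bigcup_j B_\delta$), you use the characteristic confinement \equ{bounds1} underlying Lemma~\ref{transport5}, applied with $\sigma\in[s,t]$ as final time, to show that the whole arc $\{\bar x(\sigma;t,x):\sigma\in[s,t]\}$ stays at distance $\ge\beta\delta$ from the vortices, and on that arc $D^2_x(\Psi_0+\QQ)$ is bounded by a constant independent of $\ve$. Both arguments rest on exactly the same confinement of characteristics; the paper packages it once by altering the equation, while you invoke it along each relevant arc. The paper's trick is a little shorter, but your approach has the merit of never modifying the equation and of making explicit which portion of the characteristic actually matters for the Jacobian bound. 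Your explicit treatment of $\phi_t$ (splitting into the region where $\phi\equiv 0$ and its complement, and reading the bound off the equation) is also more detailed than what the paper records.
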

For the proof of Lemmas~\ref{int1}, \ref{transport5} and \ref{transport4} see \S~\ref{secProofTransportOuter}.

\subsection{Improvement of the outer approximation}
 Expressed in the variable $x$, we have
$$\bar\phi_j(x,t) =  \bar\phi_j \left (y , t \right ), \quad y= \frac{x-\xi_j(t)}\ve .$$
By construction, we immediately check that
$$
|\bar \phi_j(x,t)| + |\nn_x\bar \phi_j(x,t)|    \  =\  O(\ve^4)
$$
 in the region where $\nn \eta_j$ is supported. In fact, we check that, globally
 $$
 |E_0(x,t;\xi)| + |\nn_x E_0(x,t)| \  =\  O(\ve^2) .
 $$
 Since the support of $E_0$ is away from the vortex points $\xi_j(t)$ (see \eqref{E0}), so is the case for the solution $\phi^{out}_1(x,t)$ of \equ{pico5}
 and satisfies
\begin{align}
\label{estPhi1Out}
 |\phi^{out}_1(x,t)| + |\nn_x \phi^{out}_1(x,t)| \  =\  O(\ve^2) .
\end{align}
These facts follow from Lemmas \ref{transport5} and \ref{transport4} in the next section.
Now we solve
\begin{align}
\label{psi2out}
\left\{
\begin{aligned}
- \Delta_x  \psi_2^{out}    & =
\phi_1^{out} + E_{20}(x,t;\xi)
\inn \Omega\times [0,T], \\
\psi_2^{out} & =   0 \onn \pp\Omega\times [0,T].
\end{aligned}
\right.
\end{align}
with $E_{20}(x,t;\xi)$ as in \equ{E0}.
By construction both $\bar\psi_j$ and $\nn \phi_j$ are of size $O(\ve^2)$, so are their gradients in $x$. It follows that $\bar\psi_2$ are of class
$C^{2,\alpha}$ in space variable, with corresponding norm $O(\ve^2)$.
Let us define
$$
\phi^{out*} :=   \phi^{out}_1, \quad \psi^{out*}:=  \psi_1^{out}+ \psi_2^{out}.
$$
Now we observe that
\begin{align*}
& E^{out}_0( \bar\phi^{in},  \bar \psi^{in}, \phi^{out*},\psi^{out*} ,\xi  )
\\
&  \quad =
    \ve^{-2}  \sum_{j=1}^N \kappa_j\, [ \bar \phi_j  \nn_x\eta_{1j} +   (1-\eta_{1j})  \nn_x U_{0j})\, ]\, \cdot \,
    \nn^\perp \big ( \sum_{j=1}^N \eta_{2j} \bar \psi_j + \psi^{out*} \big )
\\
& \qquad + \nn^\perp _x\big (  \sum_{j=1}^N \eta_{2j}\bar \psi_j + \psi_2^{out} \big )  \cdot \nn \phi_1^{out}
\, =\,  O(\ve^4) ,  \\
& E^{out}_{20}(\bar \psi^{in}, \phi^{out*},\psi^{out*} ,\xi  )\, =\,  0.
\end{align*}
and we have therefore obtained an improvement in $\ve^2$ for both error sizes.

\medskip
We will need a bound on $\partial_\xi \nabla_x \psi^{out*}$ given by
\[
\partial_\xi \nabla_x \psi^{out*}(x,t;\xi)[\zeta]
=\frac{d}{ds} \nabla_x \psi^{out*}(x,t;\xi +s\zeta) \big |_{s=0}  .
\]
Recall that $\psi^{out*} = \psi_1^{out} + \psi_2^{out} $.
From the definition \eqref{psistar} of $\Psi_0 $  we readily see that
$$ \partial_\xi \Psi_0  (x,t;\xi)[\zeta])(x,t) \ = \ O(\varepsilon^2 |\zeta(t)|) $$
uniformly on $t\in [0,T]$, and  in the $C^{3,\alpha}$-sense on  $x\in \partial \Omega$. Hence
\[
\partial_\xi \nabla_x \psi^{out}_1 (\cdot,t;\xi)[\zeta](x,t)= O ( \varepsilon^2 |\zeta(t)|)
\]
in the $C^{2,\alpha}$ in space, uniformly in $t$, thanks to equation \eqref{psi1out}.

\medskip
To estimate $\partial_\xi  \psi^{out}_2(x,t)$ we let  $ \Phi (x,t) := \partial _\xi \phi_1^{out}   (x,t;\xi)[\zeta]$.
From \eqref{pico5}, $\Phi (x,t)$   solves the transport equation
\[
\begin{aligned}
\Phi_t    + \nabla_x^\perp  (\Psi_0  +\psi_1^{out})  \cdot \nabla_x \Phi + E(x,t)
& = 0 \inn \Omega \times [0,T],
\\
\Phi(\cdot ,0) & = 0 \inn \Omega,
\end{aligned}
\]
where
\begin{align*}
E(x,t) &=
\partial_\xi E_0(x,t;\xi)[\zeta]
+ \nabla_x^\perp ( \, \partial_\xi \psi_1^{out} (x,t;\xi)[\zeta]  \, ) \cdot \nabla_x \phi_1^{out} (x,t;\xi)
\end{align*}
with $E_0(x,t)$ given in \eqref{E0}. From this formula and \eqref{estPhi1Out} we get that
$$E(x,t) = O(\varepsilon^2 \| \zeta \|_{C^1[0,T]}    ). $$
Using Lemma~\ref{int1} we then get
\[
|\Phi (x,t)|
\leq C \varepsilon^2  \| \zeta \|_{C^1[0,T]}  .
\]
Combining this estimate with \eqref{psi2out} and \eqref{derBarPsij}, we obtain
\[ %
|\partial_\xi  \psi^{out}_2(x,t;\xi)[\zeta]= O ( \varepsilon^2 ) \|\zeta\|_{C^1[0,T]} ,
\]
in $C^{2}$ norm in $x$, in a small fixed neighborhood of the points $\xi_j(t)$, uniformly in $t$.
Here we use Lemma~\ref{transport5} and  equation \eqref{psi2out}, which imply that $\psi_2^{out}(\cdot,t;\xi)$ is harmonic in some fixed neighborhood of the points $\xi_j(t)$. We conclude that
\begin{align}
\label{derPsiOutStar}%
\big |\partial_\xi  \nn_x\psi^{out*} (x,t;\xi)[\zeta]\,\big | \,+\, \big | \partial_\xi  \psi^{out*} (x,t;\xi)[\zeta]\big | \, \le \, C\varepsilon^2  \|\zeta\|_{C^1[0,T]}. ,
\end{align}
in the $C^{1,\alpha}(\bar\Omega)$-sense uniformly in $t$. Moreover, in the
in $C^{2}(\bar\Omega)$-sense norm in some fixed neighborhood of the points $\xi_j(t)$, uniformly in $t$.

 \subsection{Conclusion of the proof of Proposition \ref{aproxi}}
 Let us consider the effect of the function $\psi^{out*}$ in the inner problems.
We compute
\begin{align*}
&
\kappa_j ^{-1} E_{0j} (\bar\phi_j +\phi, \bar\psi_j + \psi, \psi^{out*};\xi)   \\
& = \nabla ^\perp  \Gamma_ 0 \cdot   \nabla \phi + \nabla^\perp    \psi  \cdot \nabla U_0\,\\
&
\qquad +\,   \kappa_j ^{-1} E_{0j} (\bar\phi_j, \bar\psi_j, 0 ;\xi)\, +\,
\kappa_j^{-1}\nn^\perp \psi^{out}\nn U_0 + \kappa_j^{-1}\nn^\perp
\psi^{out*}\nn\bar   \phi_j \\
&\qquad  +\nabla^\perp (\RR_j +\bar\psi) \cdot \nabla \phi \nonumber \\
& \qquad  +   \nabla^\perp \bar \psi \cdot \nabla \phi+   \nabla^\perp \psi \cdot \nabla \bar \phi +  \nabla^\perp \psi \cdot \nabla \phi  + \kappa_j ^{-1}\ve^2 \partial_t  \phi \nonumber \\
& \qquad  +    \kappa_j^{-1}\nn^\perp \psi^{out*}\nn  \phi.
\end{align*}
The largest new error term created is
$\nn_y^\perp \psi^{out*}\nn U_0 = O( \ve^3 \rho^{-5}) $. Next we will produce $\psi$, $\phi$ that eliminate the cubic and quartic terms in
$\ve$ in the full error by solving the corresponding elliptic equation of the form \equ{linear1}. We will be able to do so after a convenient choice of the function $\xi^{1}(t)$ in \equ{xi}, leaving the remainder $\ttt \xi$ as a free parameter.  Taking into account expansions \equ{exp2} and \equ{exp3}
we collect those terms  and solve the elliptic equation
\be \begin{aligned}
&\nabla ^\perp  \Gamma_ 0 \cdot   \nabla \phi + \nabla^\perp    \psi  \cdot \nabla U_0 \,+\,  g(y,t) \,=\, 0\inn B_{4R},
\\& -\Delta \psi \,=\, \phi,\inn B_R, \quad \psi = 0 \onn \pp B_{4R}  .
\end{aligned}
\label{poto100}\ee
where we choose
\begin{align*}
g(y,t) & =    \kappa_j^{-2}\ve  \big[ - \kappa_j\dot \xi^{1}_j + \nn _{\xi_j}^\perp K(\xi^0 + \xi^{1})- \nn _{\xi_j}^\perp K(\xi^0) \big ]\cdot \nn U_0
\\ & \quad +
\big[  \ve^3 \nn_x^\perp \theta_{j \ve} (\xi^0_j; \xi^0 )
+ \nn_y^\perp( \bar\psi^2(y,t; \xi^0 ) + \psi^{out*}(\xi_j^0(t) + \ve y, t;\xi^0 ) )\big ] \cdot \nn U_0(y)
\\
& \quad +  \ve^4 \nn^\perp _y\psi^{(2)} (y,t; \xi^0 ) \cdot \nn_y \phi^{(2)} (y,t; \xi^0 ).
\end{align*}
We recall that the last term in the above expression only carries Fourier modes $\pm 4$ and it is of size $O(\ve^4 \rho^{-4})$. The other terms do not involve mode zero.

The solvability condition \eqref{ort2} needed  for $g$ amounts to a system of differential equations for $\xi^1$ of the form
\begin{align}
\label{systemXi1}
- \kappa_j\dot \xi^{1}_j(t) + \nn _{\xi_j}^\perp K(\xi^0(t) + \xi^{1}(t))- \nn _{\xi_j}^\perp K(\xi^0(t))
= \mathcal B_j(t), \quad t\in [0,T].
\end{align}
where $\mathcal B_j(t) $ is a smooth function  in  $[0,T]$ (independent of $\ttt \xi$) such that
$$\|\mathcal B\|_{C^1[0,T]}   =O (\varepsilon^2\log\ve ). $$
We let $\xi^1(t)$ be the unique solution of system \equ{systemXi1} such that $\xi^1(0)= 0 $. It is directly checked that
$$
\|\xi^1 \|_{C^2[0,T]} \ \le \ C \ve^2|\log\ve| .
$$
Using Lemma \ref{alpha}, we find a solution $(\bar \phi^3_j ,\bar \psi^3_j)$ of equation \equ{poto100} which  satisfies
\begin{align*}
& |\bar \psi^3_j(y,t)| \, +\,  (1+|y|)\, |\nn_y \bar \psi^3_j(y,t)|\,  + \,  (1+ |y|^2)\, | \bar \phi^3_j(y,t)| 
\nonumber
\\
& \le \  C\,\ve^3|\log \ve|^2 {(1+|y|)^{-1}}
 \end{align*}
At last we define
$$
\phi_j^*(y,t;\xi ) = \bar\phi_j(y,t;\xi ) + \bar\phi^3_j(y,t ) , \quad \psi_j^*(y,t;\xi ) = \bar\psi_j(y,t;\xi ) + \bar\psi^3_j(y,t ).
$$
We get
\be \label{E2*}
\begin{aligned}
E_{2*}^{out} := & E_{20}^{out}( \psi^{in *}, \phi^{out*},\psi^{out*} ,\xi  )\
\\=& \
     \sum_{j=1}^N \kappa_j ( \bar \psi_j^3 \Delta_x\eta_{2j}  + 2 \nn_x \eta_{2j} \cdot \nn_x  \bar \psi_j^3 ) \, = \, O(\ve^4 |\log \ve |^2 \,).
      \end{aligned}
      \ee
Similarly we check
\begin{align*}
&E_j (\psi^*_j,\phi^*_j ,\psi^{*out},\xi) = \\
 &  \ve  [ -\dot {\ttt \xi} + \nn_{\xi_j} ^\perp K(\xi^0+\xi^1+\ttt \xi) -   \nn_{\xi_j} ^\perp K(\xi^0+\xi^1)] \cdot \nn U_0  + E^*_j (\ttt \xi )
\end{align*}
where
\begin{align*}
 \big|\,E^*_j (\ttt \xi )(y,t) \big| \ \le &\ C \ve^5|\log\ve|^2 (1 + |y|)^{-3} .
\end{align*}
We also observe that
$$
E^{out}_0( \phi^{in *},\psi^{in *}, \phi^{out *},\psi^{out *} ,\xi  )\, = \, O(\ve^4 |\log \ve |^2\, ).
$$
The proof of the proposition is concluded. \qed

\medskip
As for Lipschitz estimates for the errors, we directly obtain the following

\begin{corollary}
For the approximation constructed in Proposition \ref{aproxi}, we have the error Lipschitz estimates
\begin{align*}
  \big |\,\pp_{\ttt\xi} E_{0j} (\ttt \xi )[\zeta] (y,t)\big| \ \le &\ C \ve^5|\log\ve|^2\|\zeta\|_{C^1[0,T]} (1 + |y|)^{-3}\|\zeta\|_{C^1[0,T]} ,
\end{align*}
and
\begin{align*}
\big|\, \pp_{\ttt\xi}E^{out}_* (\ttt\xi) [\zeta](x,t) \big|\, +\, \big|\, \pp_{\ttt \xi}E^{out}_{2*}(\ttt\xi) [\zeta](x,t) \big|\ \le & \ C \ve^4 |\log\ve|^2\|\zeta\|_{C^1[0,T]}.
\end{align*}

\end{corollary}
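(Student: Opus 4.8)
The plan is to differentiate, term by term, the explicit formulas for the errors $E_{0j}(\phi^*_j,\psi^*_j,\psi^{out*};\xi)$, $E^{out}_*(\ttt\xi)$ and $E^{out}_{2*}(\ttt\xi)$ produced in the proof of Proposition~\ref{aproxi} with respect to the free parameter $\ttt\xi$ in an arbitrary direction $\zeta$, and then to close the estimates using the bounds on the $\xi$-derivatives of the building blocks that were recorded along that construction. The structural observation is that each of these errors is a finite sum of products of factors of two types: first, explicit functions of $\xi$ — such as $\nn^\perp_{\xi_j}K$, $H(\cdot,\xi_i)$, $\theta_{j,\ve}(\cdot;\xi)$ and their spatial derivatives, the cut-offs $\eta_{1j},\eta_{2j}$, and the change of variables $y=(x-\xi_j(t))/\ve$ — each depending on $\xi$ in a $C^1$ way with a $\xi$-derivative of the same order in $\ve$; and second, the corrections $\bar\phi_j=\bar\phi^1_j+\bar\phi^2_j$, $\bar\psi_j=\bar\psi^1_j+\bar\psi^2_j$, $(\bar\phi^3_j,\bar\psi^3_j)$, $\phi^{out*}$ and $\psi^{out*}$, for which the Remark following the second improvement of the approximation already supplies directional $\xi$-derivatives of the same size in $\ve$, up to at most an extra factor $|\log\ve|$; see in particular \eqref{derBarPsij} and \eqref{derPsiOutStar}. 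Since $\xi^1(t)$ solves \eqref{systemXi1} and does not depend on $\ttt\xi$, we have $\partial_{\ttt\xi}\xi=\zeta$ exactly, so the chain rule introduces no further dependence.

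For the inner error, $E^*_j(\ttt\xi)$ is, after removing the explicit $K$-term, the residual of the expansions \eqref{exp2} and \eqref{exp3} together with the contributions of $\psi^{out*}$ and of the last correction \eqref{poto100}; it is a finite combination of products of the building blocks above with the sizes listed there. Applying the Leibniz rule to each such product and inserting the recorded derivative bounds, one sees that the differentiated error is again a superposition of terms with the appropriate spatial decay, each of them $\|\zeta\|_{C^1[0,T]}$ times a power of $\ve$ no smaller than the one carried by the corresponding piece of $E^*_j(\ttt\xi)$, up to at most an additional and harmless $|\log\ve|$ --- i.e.\ exactly the bound stated. The only terms that are not purely algebraic are those defined through transport equations: for $\bar\phi^2_j$ one uses, as in the cited Remark, that $\Phi:=\partial_\xi\bar\phi^2_j[\zeta]$ solves a transport equation with the same drift $\Gamma_0+\ttt\RR^0_j$ and a source obtained by pairing $\partial_\xi\ttt\RR^0_j[\zeta]$ against the gradient of the current iterate, which Lemmas~\ref{transport1-new} and \ref{transport3-new} turn into the required size and gradient bounds; the elliptic corrections $\bar\psi^2_j$ and $(\bar\phi^3_j,\bar\psi^3_j)$ then inherit their $\xi$-derivative bounds from elliptic estimates applied to the Poisson problems obtained by differentiating their defining equations, and from Lemma~\ref{alpha} in the case of \eqref{poto100}, exactly as was done for $\bar\psi^1_j$ in \eqref{derBarPsij}.

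For the outer errors, by \eqref{E2*} one has, after the final adjustment, $E^{out}_{2*}=\sum_j\kappa_j(\bar\psi^3_j\Delta_x\eta_{2j}+2\nn_x\eta_{2j}\cdot\nn_x\bar\psi^3_j)$, which is linear in $\bar\psi^3_j$; differentiating in $\ttt\xi$ produces $\partial_\xi\bar\psi^3_j[\zeta]$ paired with $\eta_{2j}$, together with $\ve^{-1}\nn_y\bar\psi^3_j$ paired with $\partial_\xi\eta_{2j}$, the latter being of order $(R\ve)^{-1}=O(1)$ and supported where $|x-\xi_j|\sim R\ve$, exactly the localization already used in the undifferentiated estimate; since $\nn_y\bar\psi^3_j$ decays fast enough there, the bound $O(\ve^4|\log\ve|^2)\|\zeta\|_{C^1[0,T]}$ persists. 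The term $E^{out}_*(\ttt\xi)$ is likewise a finite sum of products of $\bar\phi_j,\bar\psi_j$, the cut-offs, $\nn_x U_{0j}$, $\nn^\perp\Psi_0$ and $\phi^{out*},\psi^{out*}$; the same Leibniz argument applies, now using that $\partial_\xi\phi^{out}_1[\zeta]$ solves the outer transport equation displayed just before \eqref{derPsiOutStar} with right-hand side $O(\ve^2)\|\zeta\|_{C^1[0,T]}$, hence is $O(\ve^2)\|\zeta\|_{C^1[0,T]}$ by Lemma~\ref{int1}, vanishes near the vortices by Lemma~\ref{transport5}, and has a controlled gradient by Lemma~\ref{transport4}; combined with \eqref{derPsiOutStar} for $\partial_\xi\psi^{out*}$ this yields $O(\ve^4|\log\ve|^2)\|\zeta\|_{C^1[0,T]}$.

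The point that needs genuine care --- and is therefore the main obstacle --- is the $\ttt\xi$-dependence sitting inside the characteristic flows: the inner drift $\Gamma_0+\ttt\RR^0_j$ and the outer drift $\Psi_0(\cdot;\xi)+\psi^{out}_1$ both depend on $\xi=\xi^0+\xi^1+\ttt\xi$, so differentiating the Duhamel representations \eqref{phi} and \eqref{phi3} in $\ttt\xi$ produces, besides the variation of the integrand, a contribution from the variation of the characteristic curve. As in the Remark, this is absorbed by observing that $\partial_{\ttt\xi}$ of any transported quantity itself solves the linearized transport equation --- the same drift, plus an extra source built from $\partial_\xi$ of the drift potential --- so that no new analytic tool is needed beyond one more application of Lemmas~\ref{transport1-new}, \ref{transport3-new}, \ref{int1}, \ref{transport5} and \ref{transport4}. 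The rest is bookkeeping: verifying across the finitely many terms that differentiation never lowers the $\ve$-order of a factor but at most costs one $|\log\ve|$, so that the differentiated errors obey exactly the asserted estimates. The Lipschitz statement itself then follows by integrating these uniform derivative bounds along the segment joining any two admissible choices of $\ttt\xi$.
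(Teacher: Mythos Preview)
Your proposal is correct and follows exactly the paper's approach: a term-by-term differentiation of the explicit error formulas, closed using the derivative bounds \eqref{derBarPsij} and \eqref{derPsiOutStar} already recorded during the construction, together with the observation that $\partial_{\ttt\xi}$ of any transported quantity again solves a transport equation of the same type. The paper's own proof is in fact just a one-line pointer to those two estimates, so your elaboration of how the Leibniz rule, the linearized transport equations, and the elliptic corrections fit together is more detailed than what the authors wrote but captures precisely the intended verification; one small simplification is that $(\bar\phi^3_j,\bar\psi^3_j)$ are built from data frozen at $\xi^0$ and hence do not depend on $\ttt\xi$ at all.
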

\begin{proof}
 The proof follows by a straightforward verification term by term of the error, using
 estimates   \equ{derBarPsij} and \equ{derPsiOutStar}.
 \end{proof}

\section{Proof of the estimates for transport equations}

In this section we prove the results stated  in \S ~\ref{secInnerTransport} and
\S~\ref{secOuter}.


\subsection{The inner transport equation}
\label{secProofTransportInner}

\begin{proof}[Proof of Lemma~\ref{transport1-new}]

We let $\bar y(s)= \bar y(s;\tau,y)$ be the unique solution of  System \eqref{characteristic1-new}.
We readily see that
$$
  \frac d{ds} \frac {|\bar y(s)|^2}2 =    \bar y(s)\cdot  (\nn^\perp_y\RR) (\bar y(s), \ve^2 s) = O( \ve^2 (1+|\bar y(s)|^2) )
$$
hence
$$
\frac d{ds} \log (1+ |\bar y(s)|^2)  =  O(\ve^2) .
$$
Therefore
$$
\log (1+ |\bar y(s)|^2)  =   \log (1+ |y|^2)  + O(\ve^2(\tau-s) ) \foral s\in (0,\tau),
$$
so that for some positive constants $a,b$ independent of $\tau\in (0, \ve^{-2}T)$
\be\label{bounds1}
a \, (1+ |y|^2)\,  \le 1+ |\bar y(s)|^2 \ \le\    b (1+ |y|^2)\, \foral s\in (0,\tau).
\ee
We recall the representation formula \equ{phi}
$$
\phi(y,\tau) = \int_0^ \tau  E( \bar y (s;\tau,y) ,\ve^2 \tau) \, d\tau  .
$$
Using \equ{bounds1} we readily get
$$
\sup_{t\in [0,T]}
\| (1+|\cdot |)^{-\alpha } \phi (\cdot , t)\|_{L^p(\R^2)} \ \le \
C \ve^{-2} \sup_{t\in [0,T]}
\| (1+|\cdot |)^{-\alpha } E(\cdot , t)\|_{L^p(\R^2)}  .
$$
for any $1\le p\le +\infty$, as desired.

\end{proof}

\medskip
A property that will be useful for the analysis of the outer problem is the fact that if the spacial support of the function $E$ stays
at a uniform large distance of the origin then so does the solution of \equ{pico1-new}.

\begin{lemma}\label{transport2}  There exist numbers $R_0>0$, $\beta>0$  such that for any sufficiently small $\ve$, all $R>R_0$ and any locally bounded function $E$ such that
$$E(y,t) = 0 \foral (y,t) \in B_{R}(0)\times [0,T] $$
we have that the solution of $\equ{pico1-new}$ satisfies
$$\phi(y,t) = 0 \foral (y,t) \in B_{\beta  R}(0)\times [0,T]. $$
\end{lemma}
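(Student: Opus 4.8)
The plan is to read the conclusion straight off the Duhamel representation \equ{phi}, namely $\phi(y,\tau)=\int_0^\tau E(\bar y(s;\tau,y),\varepsilon^2 s)\,ds$, after passing to the original time variable via $t=\varepsilon^2\tau$. Since $E$ vanishes on $B_R(0)\times[0,T]$, the value $\phi(y,\tau)$ will be zero as soon as the entire backward characteristic arc $\{\bar y(s;\tau,y):0\le s\le\tau\}$ stays inside $B_R(0)$. Thus the whole statement reduces to a confinement estimate for the characteristics issuing from points $y$ whose modulus is not too large.

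That confinement estimate is precisely \equ{bounds1}, already established inside the proof of Lemma~\ref{transport1-new}: there exist constants $0<a\le 1\le b$, depending only on $M$ and $T$ — and in particular independent of $\varepsilon$, of $R$, and of $\tau\in(0,\varepsilon^{-2}T)$ — such that $a(1+|y|^2)\le 1+|\bar y(s;\tau,y)|^2\le b(1+|y|^2)$ for all $0\le s\le\tau$. Hence, first fix $\beta\in(0,1)$ with $b\beta^2<1$, for instance $\beta=\tfrac{1}{2\sqrt b}$, and then choose $R_0>0$ so large that $b(1+\beta^2 R^2)\le 1+R^2$ for every $R\ge R_0$. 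With these choices, $|y|\le\beta R$ forces $1+|\bar y(s;\tau,y)|^2\le b(1+\beta^2R^2)\le 1+R^2$, i.e. $|\bar y(s;\tau,y)|\le R$ for all $s\in[0,\tau]$.

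Plugging this back into \equ{phi}: for $|y|\le\beta R$ and any $\tau\in(0,\varepsilon^{-2}T)$ the integrand $E(\bar y(s;\tau,y),\varepsilon^2 s)$ vanishes identically in $s$, whence $\phi(y,\tau)=0$. Rewriting in the original time variable gives $\phi(y,t)=0$ for all $(y,t)\in B_{\beta R}(0)\times[0,T]$, which is the assertion, with $\beta$ and $R_0$ as chosen above.

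There is essentially no obstacle here beyond bookkeeping. The single point requiring a touch of care is to make sure the constants $a,b$ from \equ{bounds1} are genuinely uniform in $R$ and in $\varepsilon$ — which they are, since in that estimate they arise only from integrating $\tfrac{d}{ds}\log(1+|\bar y(s)|^2)=O(\varepsilon^2)$ over an interval of length at most $\varepsilon^{-2}T$ — so that $\beta$ and $R_0$ can indeed be fixed once and for all, independently of $\varepsilon$.
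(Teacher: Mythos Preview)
Your proof is correct and follows exactly the same approach as the paper, which simply states that the result is an immediate consequence of the characteristic bounds \equ{bounds1} and the representation formula \equ{phi}. You have filled in the explicit choice of $\beta$ and $R_0$ that the paper leaves implicit, but the underlying argument is identical.
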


\begin{proof}
This is an immediate consequence of Estimates \equ{bounds1} for the characteristics and the representation formula \equ{phi}.
\end{proof}

\begin{remark}
{\em
It is relevant to remark that the results of Lemmas \ref{transport1-new} and \ref{transport2} remain valid if equation \equ{pico1-new} is not defined
in entire $\R^2$ but only
in a domain $\Lambda \subset \R^2\times [0,T]$ that contains a cylinder of the form
$
B_{ m\ve^{-1}}(0) \times [0,T]
$, $m>0$, provided that we only considered points $(y,t)\in B_{ \delta\ve^{-1}}(0) \times [0,T]$ where $\delta>0$ is small and fixed
independently of $\ve$. }
\end{remark}

\subsection{Gradient estimates for the inner transport equations}
\label{secProofTransportInnerGradient}




\begin{proof}[Proof of Lemma~\ref{transport3-new}]
Formally differentiating Formula \equ{phi} with respect to $y_i$, $i=1,2$ we obtain
\begin{align}
\label{phi1}
\pp_{y_i}\phi(y,\tau) =
\int_0^\tau \nn _{\bar y} E(\bar y(s;\tau,y) , \varepsilon^2 s) \cdot \bar y_{y_i}(s;\tau,y) \,ds .
\end{align}
Hence we need suitable estimates for $\bar y_{y_i}(s)$ where $\bar y(s) = \bar y(s;\tau,y)$.
Below we derive various estimates that we need to establish \equ{cotin1}.
Let us set
\[
H(y,s) = \Gamma_0(y) + \RR(y,\varepsilon^2 s).
\]
From Equation \equ{characteristic1-new} we see that
\[
\frac{d}{ds} H(\bar y(s),s) = \varepsilon^2 \frac{\partial \RR}{\partial t}(\bar y(s), \varepsilon^2 s).
\]
Integrating this relation we obtain
\begin{align*}
\Gamma_0( \bar y(s) ) + \RR (\bar y(s), \varepsilon^2 s)
= -
\varepsilon^2
\int_s^\tau
\frac{\partial \RR}{\partial t}(\bar y(\zeta), \varepsilon^2 \zeta)\,d\zeta
+ \Gamma_0(y)  + \RR(y,\ve^2\tau)
\end{align*}
that is
\begin{align}
\nonumber
2 \log( 1+|\bar y(s)|^2)
& =
2 \log( 1+|y|^2)
+ \RR(\bar y(s), \varepsilon^2 s)
-  \RR(y, \varepsilon^2 \tau)
\\
\label{H}
& \quad
- \varepsilon^2 \int_s^\tau \RR_t(\bar y(\zeta) , \varepsilon^2 \zeta)\,d\zeta  .
\end{align}
It is convenient  write $\bar y(s)$ in complex polar form
$$
\bar y(s) = \rho(s) e^{i\theta(s)},
$$
and also denote alternatively
$$
\hat \rho(s) = e^{i\theta(s)}, \quad  \hat \theta(s) = ie^{i\theta(s)}= -\hat \rho(s)^\perp ,
$$
Differentiating \eqref{H} with respect to the initial condition $y_i$  we get
\begin{align*}
&- \frac{4\rho \rho_{y_i}}{1+\rho^2}
+ \nabla \RR (\bar y,\varepsilon^2 s) \cdot ( \hat \rho \rho_{y_i}(s) + \rho \hat \theta \theta_{y_i})(s)
\\ = & -\varepsilon^2
\int_s^\tau \nabla \partial_t \RR(\bar y(\zeta),\ve^2 \zeta) \cdot ( \hat \rho \rho_{y_i}(\zeta) + \rho \hat \theta \theta_{y_i} (\zeta))
\,d\zeta
\\
&
-\frac{4 y_i}{1+|y|^2}
+ \nabla \RR (y,\varepsilon^2 \tau)\cdot e_i.
\end{align*}
Setting \be\label{formula-alpha}\alpha(s) :=  | \rho_{y_i}(s)| + \rho(s)  |\theta_{y_i}(s) |\ee and using  that $\rho(s) \sim |y| $  we then find
\begin{align}
\frac{|\rho_{y_i} | }{\rho^2} (s)
& \leq\ C\,  \big [\, \varepsilon^2 \alpha(s)
+  \varepsilon^4 \int_s^\tau \alpha(\zeta)\, d\zeta\,
+ \frac{1}{|y|^2}\big ]
\label{ineq1}\end{align}
On the other hand, Equation \eqref{characteristic1-new} is  explicitly given by
\begin{align}
\label{eqBarY}
\frac {d\bar y}{ds} (s) =  -4 \frac{\bar y^\perp}{1+|\bar y|^2} + \nabla_y^\perp R(\bar y,  \varepsilon^2 s).
\end{align}
which in terms of $\rho$ and $\theta$ corresponds to the system
\begin{align*}
\dot {\rho} (s) \ = &\    \frac 1\rho \nabla \RR(\bar y ,  \varepsilon^2 s)\cdot \bar y^\perp  \\
\dot \theta(s)\ =&  \    \frac {4} {1+\rho^2}    -   \frac 1{\rho^2}  \nabla \RR(\bar y ,  \varepsilon^2 s)\cdot \bar y \
\end{align*}
This immediately yields
\be\label{rels1}
\dot \rho (s) =  O (\ve^2 \rho(s)) , \quad \dot \theta(s) = \frac 4{1+\rho(s)^2} + O(\ve^2).
\ee
Incidentally,  we observe that $$\frac 1{1+\rho(s)^2}\ \le\  \dot \theta(s)\  \le\  \frac 5{1+\rho(s)^2}\quad \hbox{  $|y|< \delta\ve^{-1}$}$$
 for a sufficiently small, fixed $\delta$, thanks to the
established bounds \equ{bounds1}.

\medskip
Differentiating we also see that
\begin{align*}
\ddot \theta(s)\ =&  \   - \frac {8\rho\dot \rho } {(1+\rho^2)^2}  \\ &\ -
    \left (\frac 1{\rho^2} \nn_{\bar y} [\nabla \RR(\bar y ,  \varepsilon^2 s) \cdot \bar y^\perp ]  - \frac {2 \hat\rho }{\rho^3}\nabla \RR(\bar y ,  \varepsilon^2 s) \cdot \bar y \right ) \cdot \dot {\bar y}\\ &\ -   \frac {\ve^2}{\rho^2}  \nabla \RR_t(\bar y ,  \varepsilon^2 s)\cdot \bar y .
\end{align*}
Now, from \equ{eqBarY} we see that $\dot {\bar y} = O(\rho^{-1})$ and from \equ{rels1} we conclude
\[
\ddot \theta(s)\ =\  O(\ve^2\rho^{-2}).
\]
Next, differentiating with respect to the coordinate $y_i$, we also obtain
\begin{align*}
\dot \rho_{y_i} \ = &\   (\frac 1{\rho} \nn_{\bar y} [\nabla \RR(\bar y ,  \varepsilon^2 s) \cdot \bar y^\perp ]  - \frac { \hat\rho }{\rho^2}\nabla \RR(\bar y ,  \varepsilon^2 s) \cdot \bar y^\perp ) \cdot \bar y_{y_i}    \\
\dot \theta_{y_i}  \ =&  \  -\frac {8 \rho \rho_{y_i} } {(1+\rho^2)^2} -
    (\frac 1{\rho^2} \nn_{\bar y} [\nabla \RR(\bar y ,  \varepsilon^2 s) \cdot \bar y^\perp ]  - \frac {2 \hat\rho }{\rho^3}\nabla \RR(\bar y ,  \varepsilon^2 s) \cdot \bar y ) \cdot \bar y_{y_i}  .    \
\end{align*}
We recall that $
\bar y_{y_i} =  \rho_{y_i}\hat \rho  +  \rho \theta_{y_i}\hat \theta $ hence
$|\bar y_{y_i}(s)| \le \alpha(s)$ with $\alpha$ defined in \equ{formula-alpha}.
Thus
\be\label{relations}
\begin{aligned}
\dot \rho_{y_i} \ = &\   O(\ve^2 \alpha)    \\
\rho \dot \theta_{y_i}  \ =&  \  -\frac {8 \rho^2 \rho_{y_i} } {(1+\rho^2)^2} +
O(\ve^2\alpha )   ,   \\
\dot\rho \theta_{y_i}\ =&\  O( \ve^2 \alpha   )
\end{aligned}
\ee
Using \equ{ineq1} we then get
$$
|\dot \alpha|      \leq\ C\,  \big [\, \varepsilon^2 \alpha(s)
+  \varepsilon^4 \int_s^\tau \alpha(\zeta)\, d\zeta\,
+ \frac{1}{|y|^2}\big ]
$$
 Hence integrating we get
$$
\alpha (s)  \le    C\big [1 +  \varepsilon^2 \int_s^\tau \alpha(\zeta )\, d\zeta \, +\,  \frac {(\tau-s)}{|y|^2} \big]
$$
and using Gronwall's inequality we obtain
\be\label{cotaa}
\alpha (s)  \le C[ 1+ \frac {(\tau-s)}{|y|^2} ]
\ee
In particular, it follows from \equ{relations} that
$$
\dot \theta_{y_i}  =  O( \rho^{-3}) .
$$

\medskip
Next we prove the gradient estimate using the estimates collected above.
We can rewrite Formula \equ{phi1} in the form
\begin{align*}
\pp_{y_i}\phi (y,\tau)
& =  \int_0^\tau D E( \rho e^{i\theta } , \ve^2 s ) [ \rho_{y_i} \hat \rho   +    \rho \theta_{y_i} \hat \theta ] ds
\\
& =
\underbrace{ \int_0^\tau \nn E( \rho e^{i\theta } , \ve^2 s )\cdot \hat\rho  \rho_{y_i} dx}_{I(\tau)}    \ +\     \underbrace{   \int_0^\tau    \nn E( \rho e^{i\theta } , \ve^2 s )\cdot  \hat \theta \, \rho \theta_{y_i} ds}_{II(\tau)} .
\end{align*}
The first integral is bounded by
\be\label{a1}
|I(\tau)| \ \le \ C\, \tau\, (1+|y|)^{\alpha-1}.
\ee
We will see that $II(\tau)$ has a similar estimate. Our first consideration is that
$$
\frac d{ds}  E( \rho e^{i\theta } , \ve^2 s ) =   \nn E( \rho e^{i\theta } , \ve^2 s )\cdot \hat \rho\, \dot \rho  +
\nn E( \rho e^{i\theta } , \ve^2 s )\cdot \hat \theta\, \rho\, \dot \theta   +  \ve^2 E_t( \rho e^{i\theta } , \ve^2 s ).
$$
Hence if we let $$\gamma(s) = \frac {\theta_{y_i}(s) }{\dot \theta(s)}$$ we get
\begin{align*}
II(\tau)
& =
\underbrace{ \int_0^\tau   \frac d{ds}  E( \rho e^{i\theta } , \ve^2 s )\, \gamma(s) \, ds}_{III(\tau)}
  -
  \underbrace{ \int_0^\tau  \gamma(s) \, \nn E( \rho e^{i\theta } , \ve^2 s )\cdot \hat \rho\, \dot \rho  \, ds}_{IV(\tau)}
\\
& \quad -  \underbrace{ \int_0^\tau \, \gamma(s) \,  \ve^2 E_t( \rho e^{i\theta } , \ve^2 s )\, ds}_{V(\tau)} .
\end{align*}
From \equ{rels1} and \equ{cotaa} we see that
$$ \gamma(s) = O(\rho^{-1}\ve^{-2})    . $$
Using this and \equ{rels1} and our assumptions in $E$ we obtain that $III(\tau)$ and $IV(\tau)$ have a bound as \equ{a1}.
On the other hand, we have that
\be\label{a3}
III(\tau)=  E( y , \ve^2 \tau )\, \gamma(\tau)  - E(\bar y(0),0) \gamma(0)  - \int_0^\tau    E( \rho e^{i\theta } , \ve^2 s )\, \dot \gamma(s) \, ds.
\ee
The first two terms in the above expansion  again have an estimate like \equ{a1}.
Now,
$$
\dot \gamma (s) =  \frac {\dot \theta_{y_i}(s)}{\dot\theta(s)}  - \frac {\ddot\theta \theta_{y_i}(s)}{\dot\theta(s)^2}
$$
and using the above estimates
$$
\frac {\dot \theta_{y_i}(s)}{\dot\theta(s)} =  O (\rho^{-1}) , \quad   \frac {\ddot\theta \theta_{y_i}(s)}{\dot\theta(s)^2}  = O(\rho^{-1}).
$$
and the desired estimate for the last term in \equ{a3} follows. The proof is concluded.
\end{proof}

\medskip

\begin{remark}\label{nota}{\em
It is worth noticing that differentiability of the functions $\mathcal R$ and $E$ in the space variable $y$, with no assumptions in their $t$ dependence except continuity, yields uniform control on space derivatives of the solution of \equ{pico2}  (with possibly poor dependence on $\ve$).

\medskip
In fact if in addition to assumption \equ{papa-new} we assume
$$
|D^2_y \mathcal R (y,t)|  +   (1+|y|)\,|D^3_y \mathcal R (y,t)|  \le  M \ve^2 $$
for some $M>1$.
then the equation for $p(s) := \bar y_{y_i}(s; \tau, y) $ is
$$
 \dot p(s) =  D_y \nn_y^\perp (\Gamma_0 + \mathcal R)(y, \ve^2 s)[p(s)], \quad  p(\tau) = e_i , s\in [\tau, T\ve^{-2}] .
$$
Since
$$
|D_y \nn_y^\perp (\Gamma_0 + \mathcal R)(y, \ve^2 s)| \le     C M(|y|^{-2}+  \ve^2).
$$
from where, after an application of  Gronwall's inequality we get
$$
|D_y \bar y(s;\tau,y)| \ \le \  \exp \big (\, C T M(\ve^{-2}|y|^{-2} + 1) \,   \big)
$$
Similarly ${ q}(s) =  \bar y_{y_iy_j} (s;\tau, y)$ satisfies
$$
 \dot q(s) =  D_y \nn_y^\perp (\Gamma_0 + \mathcal R)(y, \ve^2 s)[q(s)] + b(s), \quad  q(\tau) = e_i , s\in [\tau, T\ve^{-2}] ,
$$
where
$$
|b(s)|  \le     (|y|^{-3} +  M\ve^2|y|^{-1} )\exp \big (\,  2CTM(\ve^{-2}|y|^{-2} + 1) \,   \big)
$$
and therefore we get
$$
|D_y^2 \bar y(s;\tau,y) | \ \le \    |y|^{-1} \exp \big (\, C TM(\ve^{-2}|y|^{-2} + 1) \,   \big)
$$
From the representation formula \equ{phi} we  get that if
$$
 |\nn_y E(y,t)| +  (1+|y|) |\nn_y E(y,t)| + (1+|y|)^2 |D^2_y E(y,t)|\ \le \ A(1+|y|^\alpha)
$$
then the solution of \equ{pico1-new} satisfies
$$
|D^2_y\phi(y,t)| \ \le\  C\ve^{-2} A (1+|y|)^{\alpha-2}   \exp \big (\,  2CTM(\ve^{-2}|y|^{-2} + 1) \,   \big).
$$
We also remark that a uniform estimate of the same kind on $\pp_\tau \nn_y\phi (y,\tau) $ can also be derived, following a very similar proof.

}\end{remark}

\subsection{The outer transport equation}\
\label{secProofTransportOuter}

\begin{proof}[Proof of Lemma \ref{int1}]
Let us prove that $\bar x(s)=\bar x(s; t, x)\in \Omega$ for all $0\le s\le t$.
In fact if there was a $s_0\in [0,t]$ such that $\bar x(s)\in \Omega$ for all $s\in (s_0,t]$ but along some sequence $s_n\downarrow s_0$ we have
$x(s_n)\to \bar x\in \pp\Omega$ then from the ODE satisfied we will have $x(s)\to  x_0$ as $s\to  x_0$. But since
$\nabla_x^\perp (\Psi_0 + \QQ)( x_0 , s ) = 0 $ for all $s$ thanks to \equ{cero},
uniqueness for ODE would yield $x(s)=x_0$ for all $s$, a contradiction.
Formula \equ{phi3} is then well-defined and we readily get the bound
\[
 |\phi(x,t)| \le     t\|E\|_{L^\infty (\Omega\times [0,t])} .
\]
We also have that for $1\le p <+\infty $,
$$
\int_\Omega | E(\bar x(s;t,x),s) |^p\,dx = \int_\Omega | E( x; t,s) |^p\,dx
$$
since the map $x\mapsto \bar x(s;t,x)$ is area-preserving. From this we readily get
\begin{equation*}
\| \phi(\cdot ,t)\|_{L^p(\Omega)} \ \le \ t \sup_{s\in [0,t]} \| E(\cdot ,s)\|_{L^p(\Omega)}
\end{equation*}
The proof is complete.
\end{proof}

\begin{proof}[Proof of Lemma \ref{transport5}]
We observe that the change of variables $y= \frac {x-\xi_j(t)}{\ve}$ leads us to the fact that an equation of the type \equ{pico1-new}
is satisfied where the assumption \equ{assQ} translates precisely into \equ{papa-new} for the coefficient $\RR(y,t)$. The result then follows from
Lemma \ref{transport2}.
\end{proof}

\begin{proof}[Proof of Lemma \ref{transport4}]
Under the assumptions made, $\phi$ is identically zero near the vortices. This implies that $\phi$ satisfies an equation of the form
\begin{align*}
\left\{
\begin{aligned}
\phi_t  + \nn_x^\perp H \cdot \nn \phi  =  &E(x,t)
\quad \text{in } \Omega\times [0,T],
\\
\phi(x,0) & =0 \quad \text{in } \Omega,
\end{aligned}
\right.
\end{align*}
where  $$ H(x,t)= \Big(1- \sum_{j=1}^N\chi_j(x,t) \Big ) \Psi_0(x;\xi(t)) + \QQ(x,t)$$
and
$\chi_j$ is a smooth function with $\chi_j(x,t) =1$ whenever $|x-\xi_j(t)|<\beta \delta$  and $=0$ if  $|x-\xi_j(t)|>\beta \delta$.
Then
\[
\phi(x,t)  =  \int_0^t  E( \bar x(s; t, x) ,  s)\, ds
\]
where   $\bar x(s)= \bar x(s; t, x) $ solves
\begin{align*}
\left\{
\begin{aligned}
\frac{d \bar x }{d s} (s) & =
\nabla_{\bar x}^\perp H(\bar x(s) , s ), \quad s\in [0,t],
\\
 \bar x(t) &= x  .
\end{aligned}
\right.
\end{align*}
The derivative $\bar X(s;t,x) =  D_x\bar x(s)$ solves
 \begin{align*}
\left\{
\begin{aligned}
\frac{d  \bar X }{d s} (s) & =
D_{\bar x}\nabla_{\bar x}^\perp H(\bar x(s) , s )\,  \bar X(s), \quad s\in [0,t],
\\
 \bar X(t) &= I
\end{aligned}
\right.
\end{align*}
where $I$ is the identity matrix. It follows that $|\bar X(s)| \le C$ for some uniform constant $C$.
Hence
\[
D_x\phi(x,t)  =  \int_0^t  D_{\bar x} E( \bar x(s; t, x) ,  s) \bar X(s;t,x) \, ds
\]
and estimate \equ{poto1} follows.
\end{proof}

\subsection{Uniform continuity}

Next we make some elementary comments on the uniformity of the modulus of continuity of the characteristics and the solutions of the considered transport equations.

Let us consider an equation of the form
\be\label{ff}
\left\{
\begin{aligned}
\phi_t  + \nn_x^\perp H\cdot \nn \phi  =  &E(x,t)
\quad \text{in } \Omega\times [0,T],
\\
\phi(x,0) & =0 , \quad \text{in } \Omega ,
\end{aligned}
\right.
\ee
with $H(\cdot , t) = 0 $ on $\partial \Omega$. We know that the condition $\Delta H \in L^\infty (\Omega \times [0,T])$ suffices for the well-definiteness of the characteristics $\bar x = \bar x (s; t,x)$ for \eqref{ff}. In fact, the  modulus of continuity in their parameters
depends only on $\| \Delta H \|_{L^\infty (\Omega \times [0,T])}$.

\begin{lemma} \label{modc1}

For all $\varrho>0$ there exists a positive number
$$\delta = \delta (\varrho,\|\Delta H\|_{L^\infty(\Omega\times [0,T])}, T, \Omega) $$
such that
for all $(x_1,t_1  ), \, (x_2,t_2  ) \in \bar \Omega \times [0,T]$ we have
$$
|t_1 - t_2 | + |x_1 - x_2 | < \delta \implies  \left| \bar x  (s; t_1 , x_1 ) - \bar x (s; t_2 , x_2) \right| < \varrho.
$$
\end{lemma}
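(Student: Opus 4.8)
\emph{Proof proposal.} The plan is to recognize this as the classical fact --- essentially Yudovich's uniqueness argument --- that a bounded, divergence-free velocity field whose ``vorticity'' is in $L^\infty$ generates a flow possessing an Osgood (log--Lipschitz type) modulus of continuity, with all constants controlled solely by $\|\Delta H\|_{L^\infty}$, $T$ and $\Omega$.

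First I would record the properties of the transporting field $v(x,t):=\nn_x^\perp H(x,t)$. It is divergence free, and since $H(\cdot,t)=0$ on $\pp\Omega$, elliptic regularity for $-\Delta H$ (which is bounded by hypothesis, with a bound uniform in $t$) together with the classical Biot--Savart/Calder\'on--Zygmund estimate on the bounded smooth domain $\Omega$ gives a constant $C_0=C_0(\Omega,\|\Delta H\|_{L^\infty(\Omega\times[0,T])})$ with $\|v\|_{L^\infty(\Omega\times[0,T])}\le C_0$ and
\[
|v(x_1,t)-v(x_2,t)|\ \le\ C_0\,\gamma(|x_1-x_2|),\qquad \gamma(r):=r\,(1+|\log r|)\ \ (0<r\le \tfrac12),
\]
$\gamma$ extended to be constant for $r\ge\tfrac12$. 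Moreover $\nn H$ is normal to $\pp\Omega$, so $v$ is tangent to $\pp\Omega$ and, exactly as in Lemma~\ref{int1}, $\bar x(s;t,x)\in\bar\Omega$ for all $s$.

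Next I would treat the purely spatial case $t_1=t_2=t$. Setting $z(s):=\bar x(s;t,x_1)-\bar x(s;t,x_2)$ we get $|\dot z(s)|\le C_0\,\gamma(|z(s)|)$ with $|z(t)|=|x_1-x_2|$. Since $\gamma$ is an Osgood modulus ($\int_{0^+}dr/\gamma(r)=+\infty$), Osgood's lemma produces a nondecreasing function $\beta_T$ with $\beta_T(0^+)=0$, depending only on $C_0$ and $T$, such that $|z(s)|\le \beta_T(|x_1-x_2|)$ for all $s\in[0,T]$; concretely one may take $\beta_T(r)=C\,r^{\exp(-C_0T)}$ for small $r$. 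For general $t_1,t_2$ I would then use the flow identity $\bar x(s;t_2,x_2)=\bar x\big(s;t_1,\bar x(t_1;t_2,x_2)\big)$ to write
\[
\bar x(s;t_1,x_1)-\bar x(s;t_2,x_2)=\bar x(s;t_1,x_1)-\bar x\big(s;t_1,\bar x(t_1;t_2,x_2)\big),
\]
which by the spatial estimate is bounded by $\beta_T\big(|x_1-\bar x(t_1;t_2,x_2)|\big)$. Since $\bar x(\cdot;t_2,x_2)$ solves the ODE with $|\dot{\bar x}|\le C_0$, we have $|x_2-\bar x(t_1;t_2,x_2)|\le C_0|t_1-t_2|$, hence $|x_1-\bar x(t_1;t_2,x_2)|\le (1+C_0)\big(|x_1-x_2|+|t_1-t_2|\big)$. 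Therefore
\[
\big|\bar x(s;t_1,x_1)-\bar x(s;t_2,x_2)\big|\ \le\ \beta_T\big((1+C_0)(|x_1-x_2|+|t_1-t_2|)\big),
\]
and, given $\varrho>0$, it suffices to pick $\delta>0$ with $\beta_T((1+C_0)\delta)<\varrho$, which depends only on $\varrho$, $\|\Delta H\|_{L^\infty(\Omega\times[0,T])}$, $T$ and $\Omega$ (through $C_0$ and $\beta_T$).

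The main obstacle I expect is purely technical: obtaining the uniform log--Lipschitz bound for $v=\nn^\perp H$ \emph{up to the boundary} from $\Delta H\in L^\infty$ and the homogeneous Dirichlet condition --- i.e.\ the Biot--Savart estimate on a bounded smooth domain, via the Green function of $\Omega$ --- and then converting that bound into an Osgood-type flow estimate through Osgood's lemma; both steps are classical but must be phrased so that every constant is traced back to $\|\Delta H\|_{L^\infty}$, $T$ and $\Omega$ alone. Everything else (the flow/semigroup identity, boundedness of $v$, invariance of $\bar\Omega$) is routine or already available from Lemma~\ref{int1}.
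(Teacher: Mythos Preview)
Your proposal is correct and follows essentially the same route as the paper: both arguments rest on the log--Lipschitz bound for $v=\nabla^\perp H$ coming from $\Delta H\in L^\infty$, followed by an Osgood-type integration along characteristics. The paper simply asserts the log--Lipschitz inequality and integrates $\frac{d}{ds}\log\log(1/|h(s)|^2)$ explicitly, while you invoke Osgood's lemma and organize the argument as ``spatial case first, then lift via the flow identity''; these are equivalent, and your estimate of $|x_1-\bar x(t_1;t_2,x_2)|$ via $\|v\|_\infty|t_1-t_2|$ is exactly the paper's step \eqref{cott1}.
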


\begin{proof}
By definition
$$
\dot {\bar x} (s; t_i ,x_i ) = \nabla^\perp H\left( \bar x (s;t_i ,x_i ) \right) , \quad \bar x (t_i ; t_i , x_i ) = x_i
$$
for $i=1,2$. Let $h(s) = \bar x (s; t_1 , x_1 ) - \bar x (s; t_2 , x_2 )$. Then
$$
| \dot h (s) | = | \nabla^\perp H \left( \bar x (s;t_1 ,x_1 )\right) - \nabla^\perp H \left( \bar x (s;t_2 ,x_2 )\right) |
\leq C \| \Delta  H \|_\infty \, | h(s) | \, |\log (|h (s) |) |.
$$
Setting $\beta (s) := |h(s) |^2$, we can assume that $0< \beta (s) <1$. Then we get
$$
\left| {d \over ds} \left( \log \left( \log {1\over \beta (s) }\right) \right) \right| \leq C  \| \Delta  H \|_\infty .
$$
Integrating, we  obtain
\be \label{cott}
e^{-C \| \Delta  H \|_\infty  \, T} \, \log {1 \over \beta (t_1 ) } \leq \log {1\over \beta (s) } \leq  e^{ C \| \Delta H \|_\infty \, T} \log {1\over  \beta (t_1 ) }.
\ee
Observe now that
\be \label{cott1}
|h (t_1)|  = |x_1 - x_2 | + |x_2 - \bar x (t_1 ; t_2 , x_2 ) | \leq C \, \| \Delta H \|_\infty \left( |x_1 - x_2 | + |t_1 - t_2| \right) .
\ee
Combining \eqref{cott} and \eqref{cott1}, we obtain the result.
\end{proof}
Let  $E(x,t)$ be a bounded function that  satisfies
\be\label{mumu}
 \sup_{t\in [0,T], \ |x_1-x_2|< \mu} |E(x_1,t)-E(x_2,t)|\le \Theta (\mu)
\ee
for a certain function $\Theta$ with  $\Theta(\mu)\to 0$ as $\mu\to 0$.

\begin{corollary}

Let $E$ satisfy $\equ{mumu}$. Then
for all $\varrho>0$ there exists a positive number
$$\delta = \delta (\varrho,\|\Delta H\|_{L^\infty(\Omega\times [0,T])},\|E\|_\infty, \Theta, T, \Omega) $$ such
that the solution $\phi(x,t)$ of  \equ{ff} satisfies that
for all $(x_1,t_1  ), \, (x_2,t_2  ) \in \bar \Omega \times [0,T]$ we have
$$
|t_1 - t_2 | + |x_1 - x_2 | < \delta \implies  \left| \phi( x_1,t_1 ) - \phi (x_2,t_2) \right| < \varrho.
$$

\end{corollary}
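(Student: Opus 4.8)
The plan is to deduce the statement directly from the Duhamel representation formula together with the uniform modulus of continuity of the characteristics established in Lemma~\ref{modc1}. Fix $(x_1,t_1),(x_2,t_2)\in\bar\Omega\times[0,T]$ and assume without loss of generality that $t_1\le t_2$. Using the representation $\equ{phi3}$ (which is the standard Duhamel formula for $\equ{ff}$) I would write
\begin{align*}
\phi(x_1,t_1)-\phi(x_2,t_2)
&=\int_0^{t_1}\big[\,E(\bar x(s;t_1,x_1),s)-E(\bar x(s;t_2,x_2),s)\,\big]\,ds\\
&\qquad -\int_{t_1}^{t_2}E(\bar x(s;t_2,x_2),s)\,ds\ =:\ \mathrm{I}+\mathrm{II}.
\end{align*}
The second term is immediate: $|\mathrm{II}|\le\|E\|_\infty\,|t_1-t_2|$. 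I would stress that in $\mathrm{I}$ the time slot $s$ is identical in both evaluations of $E$, so only the \emph{spatial} modulus of continuity $\equ{mumu}$ of $E$ is needed there; no continuity of $E$ in $t$ is ever used.

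For $\mathrm{I}$, given $\varrho>0$, I would first use $\Theta(\mu)\to 0$ as $\mu\to 0$ to pick $\varrho_0>0$ with $T\,\Theta(\varrho_0)<\varrho/2$. Lemma~\ref{modc1}, applied with target accuracy $\varrho_0$, then produces a number $\delta_0=\delta_0(\varrho_0,\|\Delta H\|_{L^\infty(\Omega\times[0,T])},T,\Omega)>0$ such that $|t_1-t_2|+|x_1-x_2|<\delta_0$ implies $|\bar x(s;t_1,x_1)-\bar x(s;t_2,x_2)|<\varrho_0$ for all $s\in[0,t_1]$. Inserting this bound into $\equ{mumu}$ gives $|E(\bar x(s;t_1,x_1),s)-E(\bar x(s;t_2,x_2),s)|\le\Theta(\varrho_0)$ for every such $s$, and hence $|\mathrm{I}|\le t_1\,\Theta(\varrho_0)\le T\,\Theta(\varrho_0)<\varrho/2$.

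To finish I would set $\delta:=\min\{\delta_0,\ \varrho/(2\|E\|_\infty+1)\}$, so that $|t_1-t_2|+|x_1-x_2|<\delta$ forces both $|\mathrm{I}|<\varrho/2$ and $|\mathrm{II}|\le\|E\|_\infty\,\delta<\varrho/2$, giving $|\phi(x_1,t_1)-\phi(x_2,t_2)|<\varrho$, with $\delta$ depending only on $\varrho$, $\|\Delta H\|_{L^\infty(\Omega\times[0,T])}$, $\|E\|_\infty$, $\Theta$, $T$ and $\Omega$. There is no genuinely hard step: all the substance sits in the uniformity of the characteristic flow — that the modulus of continuity of $(t,x)\mapsto\bar x(\cdot;t,x)$ is the same along the whole interval $[0,T]$ and independent of the chosen pair of points — which is exactly Lemma~\ref{modc1}, itself proved via the log-Lipschitz bound for $\nabla^\perp H$ coming from $\Delta H\in L^\infty$ and a Gronwall estimate for $\log\log(1/|h(s)|^2)$. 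The only point deserving a little care is the order of the choices: $\varrho_0$ (equivalently, the target accuracy for the characteristics) must be fixed before $\delta$.
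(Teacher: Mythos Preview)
Your argument is correct and is exactly the approach the paper takes: the paper's proof is the one-line remark that the result follows from Lemma~\ref{modc1} together with the representation formula $\phi(x,t)=\int_0^t E(\bar x(s;t,x),s)\,ds$, and you have simply filled in those details carefully. The splitting into $\mathrm{I}+\mathrm{II}$, the use of $\|E\|_\infty$ for the short time interval, and the use of the spatial modulus $\Theta$ via Lemma~\ref{modc1} on the common interval $[0,t_1]$ are precisely what the paper intends.
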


\begin{proof}
This is a direct consequence of Lemma \ref{modc1} and the representation formula
$$
\phi(x,t)  = \int_0^t E(\bar x(s; t,x) , s) \, ds\
$$
for the solution of \equ{ff}.
\end{proof}

\medskip
Let us now consider
\be\label{ff1}
\left\{
\begin{aligned}
 \phi_\tau  +  \nn_x^\perp  \left( \Gamma_0 (y) + b(y,t) \right) \cdot \nn \phi  =  &E(y,t)
\quad \text{in } \R^2 \times [0, \ve^{-2} T],
\\
\phi(y,0) & =0 , \quad \text{in } \R^2 ,
\end{aligned}
\right.
\ee
with $b(y,t) = 0= E(y,t)$ for $|y| >R$. As in the previous problem, the modulus of continuity for the characteristics
$\bar y = \bar y (s; \tau , y)$ for \eqref{ff1} depends only on $\| \Delta_y  b \|_{L^\infty (\Omega \times [0,T] ) }$, and that of the solution
only on a uniform bound for $E$ and for its  modulus of continuity

\begin{lemma} \label{modc2}
Assume that
$$
 \sup_{\tau \in [0,\ve^{-2} T], \ |y_1-y_2|< \mu} |E(y_1,\tau )-E(y_2,\tau )|\le \Theta (\mu)
$$
for a certain function $\Theta$ with  $\Theta(\mu)\to 0$ as $\mu\to 0$. Then
for each $\varrho>0$ there exists a positive number
$$\delta = \delta (\varrho,\|\Delta b\|_{L^\infty(\R^2 \times [0,\ve^{-2} T])},\|E\|_\infty, \Theta, T, \ve) $$ such
that the solution $\phi(y, \tau )$ of  \equ{ff1} satisfies that
for all $(y_1,\tau_1  ), \, (y_2,\tau_2  ) \in  \R^2 \times [0,\ve^{-2} T]$ we have
$$
|\tau_1 - \tau_2 | + |y_1 - y_2 | < \delta \implies  \left| \phi( y_1,\tau_1 ) - \phi (y_2,\tau_2) \right| < \varrho.
$$
\end{lemma}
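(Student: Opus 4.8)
The plan is to reproduce verbatim the two-step scheme used for Lemma~\ref{modc1} and the corollary immediately following it, transposed to the inner region $\R^2\times[0,\ve^{-2}T]$. First I would establish a modulus of continuity for the characteristics $\bar y(s;\tau,y)$ associated with \equ{ff1} (which solve an ODE analogous to \equ{characteristic1-new}, with drift $\nabla^\perp(\Gamma_0+b)$), uniform over the parameters but allowed to depend on $\ve$; then I would feed this into the Duhamel representation formula, analogous to \equ{phi}, namely $\phi(y,\tau)=\int_0^\tau E(\bar y(s;\tau,y),s)\,ds$, and use the hypothesis on the modulus of continuity of $E$.

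For the first step, note that the driving velocity field $\nabla^\perp(\Gamma_0(y)+b(y,\cdot))$ is the sum of $\nabla^\perp\Gamma_0$, which is globally Lipschitz on $\R^2$ since $\|\nabla^2\Gamma_0\|_\infty<\infty$, and $\nabla^\perp b(\cdot,t)$, which by the Calder\'on--Zygmund estimate for the compactly supported source $\Delta_y b$ is log-Lipschitz in $y$ uniformly in $t$ with a constant $\Lam$ controlled by $\|\Delta_y b\|_{L^\infty(\R^2\times[0,\ve^{-2}T])}$, the radius $R$, and $\|\nabla^2\Gamma_0\|_\infty$; its sup norm is likewise finite. Fixing $(y_1,\tau_1),(y_2,\tau_2)$ and setting $h(s)=\bar y(s;\tau_1,y_1)-\bar y(s;\tau_2,y_2)$, the ODE gives $|\dot h(s)|\le \Lam\,|h(s)|\,(1+|\log|h(s)|\,|)$, and, exactly as in the proof of Lemma~\ref{modc1}, putting $\beta(s):=|h(s)|^2$ and assuming $0<\beta(s)<1$ one gets $\big|\tfrac{d}{ds}\log\log\tfrac1{\beta(s)}\big|\le C\Lam$. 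Integrating over $[0,\ve^{-2}T]$ yields
\[
e^{-C\Lam\ve^{-2}T}\,\log\tfrac1{\beta(\tau_1)}\ \le\ \log\tfrac1{\beta(s)}\ \le\ e^{C\Lam\ve^{-2}T}\,\log\tfrac1{\beta(\tau_1)},
\]
while $|h(\tau_1)|\le|y_1-y_2|+|\bar y(\tau_1;\tau_2,y_2)-y_2|\le|y_1-y_2|+\|\nabla^\perp(\Gamma_0+b)\|_\infty\,|\tau_1-\tau_2|$. Consequently, for every $\varrho_0>0$ there is $\delta_0=\delta_0(\varrho_0,\Lam,T,\ve)$ such that $|y_1-y_2|+|\tau_1-\tau_2|<\delta_0$ forces $\sup_{0\le s\le\ve^{-2}T}|\bar y(s;\tau_1,y_1)-\bar y(s;\tau_2,y_2)|<\varrho_0$.

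For the second step, from the Duhamel formula we estimate
\[
|\phi(y_1,\tau_1)-\phi(y_2,\tau_2)|\le\Big|\int_{\tau_2}^{\tau_1}E(\bar y(s;\tau_1,y_1),s)\,ds\Big|+\int_0^{\tau_2}\big|E(\bar y(s;\tau_1,y_1),s)-E(\bar y(s;\tau_2,y_2),s)\big|\,ds,
\]
where the first term is bounded by $\|E\|_\infty\,|\tau_1-\tau_2|$ and, in the second term, the conclusion of the first step together with the hypothesis $|E(a,s)-E(b,s)|\le\Theta(|a-b|)$ gives a bound by $(\ve^{-2}T)\,\Theta(\varrho_0)$. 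Choosing first $\varrho_0=\varrho_0(\varrho,T,\ve,\Theta)$ so that $(\ve^{-2}T)\,\Theta(\varrho_0)<\varrho/3$, and then $\delta<\min\{\delta_0,\ \varrho/(3\|E\|_\infty)\}$, we obtain $|\phi(y_1,\tau_1)-\phi(y_2,\tau_2)|<\varrho$ whenever $|y_1-y_2|+|\tau_1-\tau_2|<\delta$, with $\delta$ depending only on $\varrho$, $\|\Delta_y b\|_{L^\infty}$, $\|E\|_\infty$, $\Theta$, $T$ and $\ve$, as claimed.

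The main obstacle is nothing more than bookkeeping about the long time horizon: the Osgood--Gronwall constant $e^{C\Lam\ve^{-2}T}$ degenerates as $\ve\to0$, so one must — and the statement already does — settle for an $\ve$-dependent $\delta$; no modulus uniform in $\ve$ can be extracted by this method, and none is needed in the sequel. Apart from that, the argument is a transcription of the proof of Lemma~\ref{modc1} and its corollary, the only genuinely new inputs being that $\nabla^2\Gamma_0$ is globally bounded (so $\Gamma_0$ contributes a harmless Lipschitz piece to the drift) and that $b(\cdot,t)$ has compact support (so that $\nabla_y b\in L^\infty$ and its log-Lipschitz constant is controlled by $\|\Delta_y b\|_\infty$).
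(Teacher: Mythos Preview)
Your proposal is correct and follows essentially the same approach as the paper: a modulus of continuity for the characteristics via the Osgood--Gronwall argument of Lemma~\ref{modc1} (with the time horizon $T$ replaced by $\ve^{-2}T$, whence the $\ve$-dependent constant $e^{C\Lam\ve^{-2}T}$), followed by the Duhamel representation formula. Your write-up is in fact more explicit than the paper's, which only records the modified Gronwall inequality and says the rest ``follows from the representation formula''; your decomposition of the drift into a Lipschitz piece ($\Gamma_0$) and a log-Lipschitz piece ($b$), and your splitting of the Duhamel integral, fill in exactly what the paper leaves to the reader.
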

\begin{proof}
 We can estimate the modulus of continuity of the characteristics $\bar y (s;\tau,y)$ in the same way as in the proof of Lemma \ref{modc1}: for each $\varrho >0$ there exists $\delta = \delta (\varrho , \| \Delta b \|_{L^\infty (\R^2 \times [0,\ve^{-2} T])} , \ve, T ) $ such that
$$
\foral (y_i,\tau_i  ) \in \bar \R^2 \times [0,\ve^{-2} T], \quad i=1,2 \, : \, |\tau_1 - \tau_2 | + |y_1 - y_2 | < \delta
$$
then
$
\quad \left| \bar y  (s; \tau_1 , y_1 ) - \bar y (s; \tau_2 , y_2) \right| < \varrho.
$
The only difference with the proof of Lemma \ref{modc1} is that inequality \eqref{cott} is replaced by
$$
e^{-C \| \Delta  b \|_\infty  \, \ve^{-2} T} \, \log {1 \over \beta (\tau_1 ) } \leq \log {1\over \beta (s) } \leq  e^{ C \| \Delta b \|_\infty \, \ve^{-2} T} \log {1\over  \beta (\tau_1 ) }.
$$
The conclusion of Lemma \ref{modc2} then follows from the representation formula for the solution of \eqref{ff1}.
\end{proof}

\section{Setting up the full problem}
Let us consider the approximate solution 
$(\omega_*,\Psi_*)$ built in \S \ref{aproxsol}.
 We look for a solution $(\omega,\Psi)$ of  the Euler equation \equ{euler} as
$$
\begin{aligned}
\omega(x,t)  = & \omega_* (x,t;  \xi) +  \vp(x,t)\\
\Psi(x,t)  = & \Psi_{*} (x,t; \xi) +  \psi(x,t)
\end{aligned}
$$
with corrections $(\vp,\psi)$ of the form \equ{f11}-\equ{f22} for functions
\begin{align*}
\phi^{in}(y,t) &= (\phi_1(y,t),\ldots,\phi_N(y,t)) , \quad \phi^{out} (x,t),\\
  \psi^{in}(y,t) &= (\psi_1(y,t),\ldots,\psi_N(y,t)),  \quad \psi^{out}(x,t),\\
\xi(t)  & =\xi^{0}(t) + \xi^1(t) + \ttt \xi(t), \quad y\in \R^2, \ x\in \Omega, \ t\in[0,T],
\end{align*} where $\ttt \xi(t)$ satisfies  \equ{cotasxi2}.
At this point we make the following choice of the large number $R$ in System \equ{inn1}-\equ{out2}.
$$
R =   \frac 1{\ve|\log\ve|} .
$$
We shall make a further decomposition of the functions $\phi_j$, $\psi_j$, as follows. We introduce the functions ${\bf Z}_\ell$,  $\ell=0,1,2,3,$ given by
\be\label{bernieb}
\begin{aligned}
 {\bf Z}_0(y) &= 1, \quad  {\bf Z}_1(y) = y_1\chi_{B_{5R}}(y)  , \\
  {\bf Z}_2(y) &= y_2\chi_{B_{5R}}(y) , \quad   {\bf Z}_3(y,t) = \frac {1- |y|^2}{1+|y|^2} + b_3(y,t) ,
\end{aligned}
\ee
where $b_3(y,t) = O(\ve^2)$ is an explicit function that we will later specify.
We assume that $\phi_j(y,t)$ has sufficient decay in the $y$-variable and write it in the form
\be\label{desco1}
\phi_j(y,t) =  \bphi_j(y,t) + \sum_{l=0,3}  \alpha_{jl}(t)  \mathcal Z_{1l}(y)
\ee
where
\begin{align}
{\mathcal Z}_{10}(y) = U_0(y),\quad
 {\mathcal Z}_{13}(y)  = 2U_0(y) + \nn_y U_0(y)\cdot y ,
\label{Z1l}\end{align}
and we impose the orthogonality conditions
\be
\int_{\R^2} \bphi_j {\bf Z} _\ell \, dy \, = \, 0, \quad \ell=0,1,2,3 \foral t\in [0,T] ,
\label{orto1}\ee
Accordingly, we let
\be\label{desco2}
 \psi_j(y,t) =  \bpsi_j(y,t) + \sum_{l=0,3}  \alpha_{jl}(t) \mathcal Z_{2l}(y)
\ee
where $-\Delta \bpsi_j = \bphi_j$. Precisely, we take
\be \label{bpsidef}
\bpsi_j(y,t) =  \mathcal N ( \bphi_j ) :=  \frac 1{2\pi} \int_{\R^2}\log \frac 1{|z-y|} \,  \bphi_j(z,t)\, dz.
\ee
and
\begin{align*}
\mathcal Z_{20}(y) = \Gamma_0(y),\quad
\mathcal Z_{23}(y)  = 2\frac {1-|y|^2}{1+|y|^2} ,
\end{align*}
so that indeed  we have
$
\psi_j= \mathcal N(\phi_j)$
and hence
$$
-\Delta  \psi_j =  \phi_j \inn \R^2\times [0,T].
$$
The terms $\alpha_l \mathcal Z_{1l}(y)$ correspond at main order to infinitesimal perturbations of the parameters involved in the main part of the ansatz
$$
\omega (x; \kappa, \ve, \xi) :=  \frac{\kappa} {\ve^{2}}U_0\left( \frac{x-\xi}{\ve}\right)
$$
Indeed, setting $y=\frac{x-\xi}{\ve}$ we see that
$$ \begin{aligned}
\ve^2 \pp_\kappa \omega (x; \kappa, \ve, \xi) = &  \mathcal Z_{10} (y) , \\
\ve^3 \pp_{\ve} \omega (x; \kappa, \ve, \xi)=  & \mathcal Z_{13} (y).
\end{aligned}$$ For later reference we also define
\be \label{Z1l2}
\mathcal Z_{1l}(y) \, = \,  \pp_{y_l} U_0(y), \quad l=1,2
\ee
so that
$$
\ve^3 \pp_{\xi_l} \omega (x; \kappa, \ve, \xi) =   \mathcal Z_{1l} (y) , \quad l=1,2.
$$

Conditions \equ{orto1} for $\ell =0,1,2$  correspond to fixing respectively center of mass in $B_{5R}$  and total mass of $\bphi_j(\cdot,t)$ to be zero for they amount to
$$
\int_{B_{5R}} y\,\bphi_j(y,t)\, dy \, = \, 0, \quad \int_{\R^2} \,\bphi_j(y,t)\, dy \, = \, 0.
$$
The reason why we fix the center of mass condition in $B_{5R}$ rather than in entire $\R^2$ is that in the natural space considered for $\bphi_j$,  the function $y\,\bphi_j(y,t)$ will be barely non-integrable in entire space.
We introduce the following operators, depending on a homotopy parameter $\la\in [0,1]$
\begin{align}
{\EE }_{j,\la} & (  \bphi_j ,  \alpha_j, \psi^{out};\ttt \xi) \ :=\ \label{Ejla} \\
&
 \ve^2 \pp_t \bphi_j    +  \kappa_j \nn_y^\perp \big [ \Gamma_0    +   \la {a}_{j}( \bpsi_j , \alpha_j, \psi^{out},\ttt\xi )  \big ]   \cdot
 \nn_y \bphi_j \, +\, \kappa_j  \nabla^\perp_y \bpsi_j   \cdot \nabla_y (U_0 +\la \eta_4\phi_j^*) \nonumber
 \\
 & \quad +\,  \la\,\ttt \EE_{j}( \bpsi_j , \alpha_j,  \psi^{out},\ttt \xi ) \nonumber \\
&\quad  +\, \ve  [ -\dot {\ttt \xi}_j +  (D_{\xi_j}\nn^\perp_{\xi} K)(\xi_0+ \xi_1) [\ttt\xi] + \la\,\eta_4\, \nabla^\perp_y \psi^{out} ] \cdot \nn U_0  +  \ve^2\sum_{l=0,3}  \dot \alpha_{lj}  \mathcal Z_{1l}  \inn \R^2\times [0,T].\nonumber
\end{align}
where $\phi_j^*$ is the function built in Proposition \ref{aproxi},
$\bpsi_j = \mathcal N(\bphi_j)$ as in \equ{bpsidef}
and we have defined

\be
\begin{aligned}
&\alpha_j\ :=  \ (\alpha_{0j}, \alpha_{3j}),\\
&\kappa_j{a}_{j}( \bpsi_j , \alpha_j,  \psi^{out},\ttt \xi )\ := \  \eta_{4} \big[ \, \RR_j^*(\cdot ;\ttt \xi) +  \kappa_j  \bpsi_j + \kappa_j\sum_{l=0,3} \alpha_{lj}\mathcal Z_{2l} +  \psi^{out} \big ] \\
&\ttt \EE_{j}( \bpsi_j , \alpha_j, \psi^{out},\ttt \xi )\ :=
 \sum_{l=0,3} \alpha_{lj} \kappa_j \nn_y^\perp  [{a}_{j}( \bpsi_j , \alpha_j, \psi^{out},\ttt \xi )  ] \cdot
 \nn_y  \mathcal Z_{1l}
 \\
   &+ \ve \big [\nn_{\xi} ^\perp K(\xi^0+\xi^1+\ttt \xi) -   \nn_{\xi} ^\perp K(\xi^0+\xi^1)- (D_{\xi_j}\nn^\perp_{\xi} K)(\xi_0+ \xi_1) [\ttt\xi]\big ] \cdot \nn U_0\\
&+  \,\nabla^\perp_y ( \eta_4\psi^{out} ) \cdot \nabla_y \phi_j^*  +  \sum_{l=0,3} \alpha_{lj}  \nabla^\perp_y   \mathcal Z_{2l}   \cdot \nabla_y (\eta_4\phi_j^*) + \,\eta_4E^*_j (\ttt \xi ).
\end{aligned}\label{defaj}\ee
Here $E^*_j (\ttt \xi )$ is the remainder in Proposition \ref{aproxi}, $K$ is the function defined in \equ{K}  and
\begin{align*}
\RR_j^*(y ,t;\ttt \xi) =  & \kappa_j \RR_j(y,t ;\xi ) + \kappa_j \psi_j^*(y,t;  \xi) + \psi^{out*}( \xi_j(t)+ \ve y,t; \xi) ,\\
\eta_4(y) =& \eta \left ( \frac{|y|}{4R} \right ),
\end{align*}
where $\RR_j$ was defined in \equ{defRa}. For  $\phi_j$ as in \equ{desco1} and $\psi_j =\mathcal N(\phi_j)$ so that \equ{desco2} holds,
and letting
$ \bphi =(\bphi_1,\ldots,\bphi_N), 
\quad \alpha =(\alpha_1,\ldots,\alpha_N), $
we define
\begin{align*}
\EE_{1,\la}^{out}( \ttt \phi ,  \alpha, \phi^{out},  \psi^{out},\ttt \xi)
& \, = \,   \pp_t \phi^{out}   \, + \, \nn^\perp _x\big (\Psi_* (\cdot;  \xi)
+ \la\sum_{j=1}^N \eta_{2j}  \psi_j + \la  \psi^{out} \big )  \cdot \nn\phi^{out}  \\ &\
+ \la \ttt \EE_{1}^{out}( \ttt \phi ,\alpha,     \psi^{out},\ttt \xi)
\inn \Omega \times [0,T].
\end{align*}
where
\be\label{EE1}
\begin{aligned}
& \ttt \EE_{1}^{out}( \ttt \phi , \alpha,  \psi^{out},\ttt \xi)\, :=\, \\
&  \ve^{-2}  \sum_{j=1}^N  \kappa_j \phi_j \big[ \pp_t \eta_{1R,j} + \nn_x^\perp (\Psi_* + \sum_{j=1}^N \kappa_j \eta_{2R,j} \psi_j +\psi^{out})) \cdot
 \nn_x \eta_{1R,j} \big ]
\\ &+  (1-  \sum_{j=1}^N\eta_{1R,j}) \nn_x ( \,  \sum_{l=1}^N \kappa_l \eta_{2R,l}\,\psi_l +\psi^{out}) \cdot \nn_x \omega_*
 \\ & +\,   (1- \sum_{j=1}^N \eta_{1R,j} ) E_*(\ttt\xi),
 \end{aligned}\ee
We also define
\begin{align*}
 &  \EE_{2,\la}^{out}( \ttt\psi, \alpha , \phi^{out}, \psi^{out},\ttt \xi )\ =\\ \
  &   \Delta_x  \psi^{out}    +
     \la\,  \phi^{out}
      +  \la\, \sum_{j=1}^N \kappa_j ( \psi_j \Delta_x\eta_{2j}  + 2 \nn_x \eta_{2j} \cdot \nn_x   \psi_j) + \la\,E_{2*}^{out}(\ttt\xi)
\end{align*}
$$
\psi^{out} = 0 \onn \pp\Omega\times [0,T].
$$
The functions $E^{out}_* (\ttt \xi )$ and $E_{2*}^{out}(\ttt\xi)$ are defined in \eqref{roger}.
The key observation is that for $\la=1$ these operators recover $E_j$, $E_1^{out}$ and $E_2^{out}$ given by  \equ{Ej}, \equ{E1} and \equ{E2}, whose annihilation corresponds to System
\equ{inn1}-\equ{out1}-\equ{out2}. Indeed, for
$\phi_j$, $\psi_j$ given by \equ{desco1}-\equ{desco2} we have the identities

\begin{align*}
{\EE }_{j,1}  (  \bphi_j , \alpha_j, \psi^{out};\ttt \xi)  \, = &\, E_j( \phi_j ,  \psi_j  , \psi^{out} ;\xi)\inn B_R\times [0,T] , \\
\EE_{1,1}^{out}( \ttt \phi ,\alpha,   \phi^{out},  \psi^{out};\ttt \xi) \, = &\, E_1^{out}( \phi^{in} , \psi^{in}  , \phi^{out}, \psi^{out};\xi),
\\
 \EE_{2,1}^{out}( \ttt\phi ,\alpha, \phi^{out},  \psi^{out};\ttt \xi )\, = &\, E_2^{out}( \psi^{in}  ,  \phi^{out},  \psi^{out} ;\xi ).
\end{align*}
Problem \equ{inn1}-\equ{out2} amounts to finding $( \ttt \phi , \alpha, \phi^{out}, \psi^{out},\ttt \xi)$ that make the three quantities above equal to zero.  We will do this by a continuation argument that involves finding uniform a priori estimates for the corresponding equations along the deformation parameter $\la$  imposing in addition initial condition $0$ for all the parameter functions.

\medskip
We consider the functions $\alpha_j, \psi^{out},\xi$ as given
and require that $\bphi_j$ satisfies an initial value problem of the form

\be\left\{ \begin{aligned}
{\EE }_{j,\la}  (  \bphi_j ,  \alpha_j, \ttt \psi^{out};\ttt \xi)   =&   \sum_{l=0}^3 c_{lj}(t)\mathcal Z_{1l}(y)
  \inn \R^2\times [0,T] , \\
\bphi_j(y,0 )  &= 0\inn \R^2\\
\end{aligned}\right. \label{equinner} \ee
where $\bpsi_j$ is given in terms of $\bphi_j$ by \equ{bpsidef}.
 and
the four orthogonality conditions \equ{orto1} are imposed on $\bphi_j$.  The functions  $c_{lj}(t)$ do take
explicit values which are linearly dependent on  $\bphi_j$. They are computed after integrating the equation against ${\bf Z}_\ell$ in space variable

\medskip
We will later obtain through integrations by parts a ``workable'' expression for the functionals $c_{\ell j}$
 which in particular will not depend on any
differentiability of  $\bphi_j$. To annihilate $\EE_{\la,j}$
we impose the initial value problems
\be\label{equc} \begin{cases} c_{lj}[  \bphi_j ,  \alpha_j,  \psi^{out},\ttt \xi,\la](t) =0 \foral t\in [0,T],\\
\ttt \xi_j(0)=  \alpha_j(0) =0  \end{cases}\ee
for all $l$ and $j$.
We require \be\label{equout1} \left\{\begin{aligned} \EE_{1,\la}^{out}( \ttt \phi ,  \alpha ,  \phi^{out},  \psi^{out};\ttt \xi) &=0 \inn  \Omega\times [0,T],\\
 \phi^{out}(\cdot ,0) &= 0 \inn \Omega.
 \end{aligned} \right.\ee

\be\label{equout2} \left\{\begin{aligned} \EE_{2,\la}^{out}( \ttt\phi,\alpha , \phi^{out},  \psi^{out};\ttt \xi ) &=0 \inn \ \ \Omega\times [0,T],\\
 \psi^{out} &= 0 \onn \pp\Omega\times [0,T].
\end{aligned} \right.\ee
and we recall that, as in \equ{desco1}-\equ{desco2}
\begin{align*}
\phi_j(y,t) \, = & \, \bphi_j(y,t) + \sum_{l=0,3}  \alpha_{jl}(t)  Z_{1l}(y),\\
\psi_j(y,t)\, = & \, \bpsi_j(y,t) + \sum_{l=0,3}  \alpha_{jl}(t) \mathcal Z_{2l}(y).
\end{align*}

Let us explain the strategy of the rest of the proof.
We consider the vector of parameter functions
$\vec { p} = (\bphi, \alpha,  \phi^{out},\psi^{out}, \ttt\xi) $. We shall define a Banach space $(X, \|\cdot \|_X)$ where
these functions belong
and set the system of equations  \equ{bpsidef}, \equ{equinner}, \equ{equc}, \equ{equout1}, \equ{equout2},
in the fixed point form
\be\label{equp}
\vec { p}  = {\mathcal F} ( \vec { p} , \la ), \quad \vec { p} \in \OO\, .   
\ee
Here $\OO$ designates a bounded open set in $X$ with $\vec p= \vec 0\in \OO$ and
 ${\mathcal F}( \cdot  , \la )$  is a homotopy of nonlinear compact operators on $\bar \OO$
with ${\mathcal F} ( \cdot  , 0)$ linear.

\medskip
We shall prove that a suitable choice of a small $\OO$ yields that for all $\la\in [0,1]$
no solution of \equ{equp} with  $\vec p\in \pp \OO $   exists.
Existence  of a solution of \equ{equp} for $\la=1$ thus follows from standard degree theory. But this precisely corresponds to a solution of the original problem. The definition of the norm and the set $\OO$ will yield the desired properties of the solution of Euler equation thus obtained.


\medskip
In order to find the desired a priori estimates we need several preliminary considerations which we make in the next section.

\section{Preliminaries for a priori bounds}

\subsection{The Poisson equation}

Let us consider for the solution of the Poisson equation
\be\label{poisson}
-\Delta_y \psi(y) = \phi(y)  \inn \R^2
\ee
given by the Newtonian potential
\be\label{newtonian}
\psi(y) =  \frac 1{2\pi} \int_{\R^2}  \log \frac 1{|y-z|} \phi(z)\, dz.
\ee
Our basic assumptions are
$$
\int_{\R^2} \phi^2(y) U_0(y)^{-1}dy \, <\,+\infty , \quad \int_{\R^2} \phi(y)\, dy\, =\, 0  .
$$
Equation \equ{poisson} can be pulled back into the sphere $S^2$ by means of  the stereographic
\be \label{stereo}
\Pi(z) = (    \frac{z_1}{1-z_3},\frac{z_2}{1-z_3}), \quad z \in S^2 -\{(0,0,1)\},
\ee
whose inverse is given by
$$
\Pi^{-1} (y) =   (\frac{2y_1}{1+ |y|^2}, \frac{2y_2}{1+ |y|^2},\frac{|y|^2-1 }{1+|y|^2}), \quad y\in \R^2.
$$
For a function  $h(y):\R^2\to \R$ we denote by $h(z)$ the function $h(\Pi(z))$ defined on $S^2$. Then we get
\begin{align*}
\Delta_{\R^2} h(y) = U_0  (\Delta_{S^2}  h ) (z) ,  \quad  y = \Pi(z).
\end{align*}
and
$$
\int_{\R^2}h(y)\, dy \  =\ \int_{S^2}  h(z)\,  U_0^{-1}(z) \, d\sigma(z)  .
$$
Let us denote
$$
\ttt \phi(y) = \frac {\phi(y)}{U_0(y)}
$$
Then we have
$$
\int_{S^2} \ttt \phi^2 (z) \, d\sigma(z) \ =\ \int_{\R^2}  \phi^2(y) U_0(y)^{-1}\, dy , \quad \int_{S^2} \ttt \phi (z) \, d\sigma(z) \ =\ 0,
$$
and Equation \equ{poisson}  gets transformed into
\be\label{poisson1}
-\Delta_{S^2} \ttt \psi  = \ttt \phi \inn S^2  .
\ee
The mean value zero condition implies the existence of a unique solution of \equ{poisson1} with mean value zero, which we denote in what follows as $  (-\Delta_{S^2})^{-1} \ttt \phi $. This solution is in $H^2(S^2)$, hence it is H\"older continuous of any order. Setting $P=(0,0,1)$ then
the function
$$ \ttt \psi(z) = (-\Delta_{S^2})^{-1} \ttt \phi (z)- (-\Delta_{S^2})^{-1} \ttt \phi (P)$$
is the only one that vanishes at $P$. Pulling back this function to a $\psi(y)$ defined in $\R^2$ we see that
it satisfies equation \equ{poisson} and it is the only solution that vanishes as $|y|\to \infty$. In fact that condition is precisely satisfied by
\equ{newtonian}.  This and the H\"older condition yields that $\psi$ given by \equ{newtonian} satisfies
\be\label{cota1psi}
|\psi(y)| \ \le \  \frac {C}{1+ |y|^{1-\sigma}}\,  \| \phi\, U_0^{-\frac 12} \|_{L^2(\R^2)}
\ee
for an arbitrarily small $\sigma>0$. Moreover, for all $p>2$ we have an $L^p(S^2)$-gradient estimate for $\ttt \psi(z)$ of the form
$$
\|\nn_{S^2} \ttt\psi \|_{L^p(S^2)} \ \le \ C  \| \ttt \phi \|_{L^2(S^2)}
$$
which yields for $\psi$ in \equ{newtonian}
\be\label{gradp}
\| U_0^{-\frac 12 + \frac 1{2p}} \nn \psi \|_{L^p(\R^2)} \ \le \ C  \| \phi\, U_0^{-\frac 12} \|_{L^2(\R^2)}.
\ee
If we in addition have that $\ttt\phi\in L^q(S^2)$ for some $q>2$, then a solution of
\equ{poisson1} satisfies
$$
\|\nn_{S^2} \ttt\psi \|_{C^{0,\alpha} (S^2)} \ \le \ C  \| \ttt \phi \|_{L^q(S^2)}.
$$
for some $0<\alpha<1$.
This estimate translates for $\psi $ in \equ{newtonian} into
\be \label{gradtotal}   |\psi(y)| \, + \,  (1+|y|) \, |\nn \psi(y) | + (1+|y|)^{1+\alpha} [\nn \psi ]_\alpha (y)    \ \le \frac C{1+|y|}  \| U_0^{\frac 1q -1} \phi \|_{L^q(\R^2)}, \ee
where
$$
[\nn \psi ]_\alpha (y) = \sup_{y_1,y_2\in B_1(y)} \frac {|\nn \psi (y_1) -\nn \psi (y_2) |}  {|y_1-y_2|^\alpha} .
$$
A useful corollary \equ{gradtotal} is the following estimate.
For $a>0$ let us denote
$$
\|\phi\|_a = \sup_{y\in\R^2} |(1+|y|)^a\phi(y)|.
$$

\begin{lemma}\label{interpolacion}
Let $0<\beta<1$ be fixed.
Then given $0<\sigma<1$ there exist numbers $C_\sigma >0$ and  $\alpha \in (0,1)$ such that for any function $\phi(y)$ with $\|\phi\|_{3+\beta}<+\infty$ and
$\int_{\R^2}\phi =0 $ we have
$$
\begin{aligned}
& |\psi(y)| \, +\, (1+|y|) |\nn_y \psi (y)|  + (1+|y|)^{1+\alpha}  [\nn \psi ]_\alpha (y) \\ \, & \le \, \frac {C_\sigma }{1+|y|}   \, \|\phi\|_{3+\beta}^\sigma\|U_0^{-\frac 12} \phi\|_{L^2(\R^2)}^{1-\sigma}  \foral y\in \R^2.
\end{aligned}
$$

\end{lemma}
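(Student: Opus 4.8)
The plan is to interpolate between the two estimates already available from the spherical formulation, namely the $L^2$-based bound \equ{cota1psi} and the $L^q$-based bound \equ{gradtotal}, choosing $q$ slightly above $2$. First I would fix $q>2$ close to $2$ and write, by \equ{gradtotal},
\[
|\psi(y)| \, +\, (1+|y|)|\nn\psi(y)| + (1+|y|)^{1+\alpha}[\nn\psi]_\alpha(y)\ \le\ \frac{C}{1+|y|}\,\|U_0^{\frac1q-1}\phi\|_{L^q(\R^2)},
\]
so the whole matter reduces to estimating the weighted $L^q$-norm $\|U_0^{\frac1q-1}\phi\|_{L^q(\R^2)}$ by an interpolated quantity $\|\phi\|_{3+\beta}^\sigma \|U_0^{-1/2}\phi\|_{L^2(\R^2)}^{1-\sigma}$. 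This is a pure Hölder-interpolation computation: since $U_0(y)\sim (1+|y|)^{-4}$, one has $U_0^{\frac1q-1}\sim (1+|y|)^{4-4/q}$ and $U_0^{-1/2}\sim (1+|y|)^2$; the pointwise bound $\|\phi\|_{3+\beta}$ controls $|\phi(y)|(1+|y|)^{3+\beta}$, and splitting $|\phi|^q = |\phi|^{2(1-\theta)q}\cdot |\phi|^{(q-2(1-\theta)q)}$ — or more simply bounding $|\phi|^{q-2}$ by a power of $\|\phi\|_{3+\beta}$ times a decaying weight and keeping the remaining $|\phi|^2$ against the $L^2$-weight — yields the claim provided the exponents balance with $q$ close enough to $2$.

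Concretely, I would write $|\phi(y)|^q = |\phi(y)|^2\,|\phi(y)|^{q-2}$ and bound $|\phi(y)|^{q-2}\le \|\phi\|_{3+\beta}^{q-2}(1+|y|)^{-(3+\beta)(q-2)}$, so that
\[
\|U_0^{\frac1q-1}\phi\|_{L^q(\R^2)}^q \ \le\ \|\phi\|_{3+\beta}^{q-2}\int_{\R^2}(1+|y|)^{q(4-4/q)-(3+\beta)(q-2)}\,|\phi(y)|^2\,dy.
\]
The exponent simplifies to $4q-4-(3+\beta)(q-2) = (1-\beta)q + 2\beta - 2 +2$... more carefully $4q-4-(3q-6+\beta q-2\beta) = q - \beta q + 2 + 2\beta = (1-\beta)q + 2 + 2\beta$, which for $q$ near $2$ is near $4 - 2\beta$. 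Since the $L^2$-norm carries the weight $U_0^{-1/2}\sim(1+|y|)^2$, i.e. $\|U_0^{-1/2}\phi\|_{L^2}^2 = \int (1+|y|)^{\,?}|\phi|^2$ with $U_0^{-1}\sim(1+|y|)^4$, we have $\int(1+|y|)^{(1-\beta)q+2+2\beta}|\phi|^2 \le \int (1+|y|)^4 |\phi|^2 = \|U_0^{-1/2}\phi\|_{L^2}^2$ as soon as $(1-\beta)q + 2 + 2\beta \le 4$, i.e. $q \le \frac{2-2\beta}{1-\beta} = 2$; so one takes $q>2$ but then pays the small discrepancy with an extra factor of $\|\phi\|_{3+\beta}$ to a small positive power, absorbing the slightly-too-large weight into the pointwise bound. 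Collecting exponents gives $\|U_0^{\frac1q-1}\phi\|_{L^q}\le C\|\phi\|_{3+\beta}^{\sigma}\|U_0^{-1/2}\phi\|_{L^2}^{1-\sigma}$ with $\sigma = \sigma(q)\to 0$ as $q\downarrow 2$, and the bound on $\psi$ and $\nn\psi$ follows from \equ{gradtotal}; the Hölder seminorm term comes along for free since \equ{gradtotal} already includes it.

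The main obstacle — and the only place requiring care — is the borderline nature of the weight exponents: with $\beta<1$ fixed, the naive Hölder split just fails to be integrable against the $L^2$ weight unless $q=2$ exactly, and the $L^q$-regularity estimate genuinely needs $q>2$ for the Hölder gradient bound. The trick is to spend an arbitrarily small power of the (finite) pointwise norm $\|\phi\|_{3+\beta}$ to upgrade the decay by an arbitrarily small amount $(1+|y|)^{-\sigma'}$, making the integral converge; this is exactly why the exponent $\sigma$ in the statement is only required to be a small positive number rather than something explicit, and why $\alpha$ shrinks accordingly. I would also remark that the mean-value-zero hypothesis $\int_{\R^2}\phi=0$ is used only to invoke \equ{cota1psi} and \equ{gradtotal}, which already presuppose it, so no additional argument is needed there.
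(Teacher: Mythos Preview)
Your plan is essentially the paper's: reduce everything to \equ{gradtotal} and then bound $\|U_0^{1/q-1}\phi\|_{L^q}$ by H\"older interpolation between the weighted $L^2$ norm and the pointwise norm $\|\phi\|_{3+\beta}$. The difference is in how that interpolation is carried out. Your direct split $|\phi|^q=|\phi|^2\,|\phi|^{q-2}$ with the pointwise bound on $|\phi|^{q-2}$ lands, as you correctly compute, on a weight $(1+|y|)^{4+(1-\beta)(q-2)}$ which just exceeds the $L^2$ weight $(1+|y|)^4$; your proposed fix (``spend a small extra power of $\|\phi\|_{3+\beta}$'') does work, but only after one further H\"older step to pass from $\int(1+|y|)^{c}|\phi|^{2-\delta}$ back to $\|U_0^{-1/2}\phi\|_{L^2}^{2-\delta}$ times a convergent weight integral --- this step is not written and is where the constraint $\delta>\tfrac{(1-\beta)(q-2)}{\beta}$ appears.

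The paper sidesteps this borderline entirely by interpolating not against $L^\infty$ but against a weighted $L^p$ for a fixed $2<p<\tfrac{2}{1-\beta}$: one checks in one line that $\|U_0^{1/p-1}\phi\|_{L^p}\le C\|\phi\|_{3+\beta}$ (the $\beta>0$ slack is exactly what makes this integral converge), and then a single straightforward H\"older between $L^2(U_0^{-1})$ and $L^p(U_0^{1-p})$ gives the $L^q$ bound with $\sigma=\tfrac{p\la}{q}$ for $q=2(1-\la)+\la p$. Both arguments are correct; the paper's is cleaner because it spends the $\beta$-slack once, in establishing the $L^p$ endpoint, rather than patching a failing $L^2$--$L^\infty$ split after the fact.
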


\begin{proof}

Let us fix a number $p$ with $2<p< \frac 2{1-\beta}$. Then we have that
$$
  \| U_0^{\frac 1p -1} \phi \|_{L^p(\R^2)} \ \le \ C \,\|\phi\|_{3+\beta}.
$$
We write a number $q \in (2,p)$, which we are interested in taking it arbitrarily close to 2,
in the form   $$q= 2 (1-\la ) + \la p , \quad \la \in (0,1).$$
Using H\"older's inequality we check that
\begin{equation*}\begin{split}
\| U_0^{{1\over q}-1} \phi \|_{L^q (\R^2)}^q &= \int_{\R^2} U_0^{1-q} |\phi |^q \, dy \\
&\leq \left(\int_{\R^2}  U_0^{-1} |\phi|^2 \, dy \right)^{(1-\la)} \, \left( \int_{\R^2}  U_0^{1-p} |\phi|^p \, dy \right)^\la \\
&= \| U_0^{-{1\over 2}} \phi \|_{L^2 (\R^2) }^{2(1-\la)} \, \| U_0^{{1\over p}-1} \phi \|_{L^p (\R^2)}^{p \la},
\end{split}
\end{equation*}
and then
$$
\| U_0^{{1\over q}-1} \phi \|_{L^q (\R^2)} \leq \| U_0^{-{1\over 2}} \phi \|_{L^2 (\R^2) }^{{2\over q} \, (1-\la)} \, \|  \phi \|_{3+\beta }^{{ p \over q} \,  \la} .
$$
Letting $$\sigma = \frac {p}{q}\la =  \frac   {p\la}{ 2(1-\la) + p\la } $$
the desired result readily follows from \eqref{gradtotal}.
\end{proof}

\subsection{A quadratic form}

Let us consider functions  $\phi$ that satisfy
\be\label{assphi1}
\| \phi\, U_0^{-\frac{1}{2} } \|_{L^2(\R^2)}<\infty
\ee
and the orthogonality conditions
\be
\begin{aligned}
\int_{\R^2} \phi(y){\bf Z}_\ell(y)\, dy \, = \, 0, \quad \ell=0,1,2,3.
\quad
\end{aligned}
\label{orto40}\ee
where, consistently with \equ{bernieb},  we denote
\be\label{bbernie}
\begin{aligned}
 {\bf Z}_0(y) &= 1, \quad  {\bf Z}_1(y) = y_1\chi_{B_{5R}}(y)  , \\
  {\bf Z}_2(y) &= y_2\chi_{B_{5R}}(y) , \quad   {\bf Z}_3(y) = \frac {1- |y|^2}{1+|y|^2} + b_3(y) ,
\end{aligned}
\ee
where $R$ is a large positive number
and $b_3(y)$ satisfies
\be\label{bernie0} |b_3(y)| \le R^{-\nu} \quad \hbox{for some $\nu>0$. } \ee
We have the validity of the following key estimate for functions with the above properties.

\begin{lemma}\label{passl}
There exists a number $\gamma>0$ such that for any  sufficiently large $R$  and all $\phi$ satisfying conditions $\equ{assphi1}$-$\equ{orto40}$, the following holds:
let $g$ be given by
\[
g =   U_0^{-1} \phi  - \psi, \quad \psi(y) = \frac 1{2\pi}\int_{\R^2} \log \frac 1 {|y-z |} \phi(z)\, dz
\]
we have
\begin{equation}\label{pass}
 \int_{\R^2} \phi g \geq \frac{\gamma}{|\log R|}
\int_{\R^2} \phi^2 U_0^{-1}.
\end{equation}
\end{lemma}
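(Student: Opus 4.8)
The plan is to transplant the whole estimate to the sphere via the stereographic projection \equ{stereo} and to reduce \equ{pass} to a spectral fact about the linearized Liouville operator, whose degeneracy is governed by its three–dimensional bounded kernel $E_1=\mathrm{span}\{\,y_1/(1+|y|^2),\ y_2/(1+|y|^2),\ (1-|y|^2)/(1+|y|^2)\,\}$. The orthogonality conditions \equ{orto40} are precisely what control the component of $\phi/U_0$ along $E_1$, and the $|\log R|$ loss in \equ{pass} will come only from the fact that \equ{orto40} is imposed against the truncated functions ${\bf Z}_1,{\bf Z}_2$ rather than against genuine kernel elements.

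First I would set $\ttt\phi:=\phi/U_0$ and let $\ttt\psi$ denote $\psi$ read as a function on $S^2$ through $\Pi$. By the transformation rules recorded after \equ{stereo}, together with the mean–zero condition $\int_{\R^2}\phi=0$ (which is \equ{orto40} with $\ell=0$), one has $\ttt\phi\in L^2(S^2)$ with $\int_{S^2}\ttt\phi\,d\sigma=0$, $-\Delta_{S^2}\ttt\psi=\ttt\phi$, and, integrating by parts on the closed surface $S^2$,
$$
\int_{\R^2}\phi\,g\,dy=\int_{S^2}\ttt\phi^2\,d\sigma-\int_{S^2}|\nn_{S^2}\ttt\psi|^2\,d\sigma,\qquad
\int_{\R^2}\phi^2U_0^{-1}\,dy=\int_{S^2}\ttt\phi^2\,d\sigma .
$$
Expanding $\ttt\phi=\sum_{k\ge1}\ttt\phi_k$ into spherical harmonics, with $-\Delta_{S^2}$ acting as $\lambda_k$ on the $k$-th eigenspace $E_k$, one has $\lambda_1=1$: the three functions spanning $E_1$ pull back to $\mathrm{span}\{\partial_{y_1}\Gamma_0,\partial_{y_2}\Gamma_0,\ 2+y\cdot\nn\Gamma_0\}$, which by the classical nondegeneracy of the Liouville bubble is exactly the bounded kernel of $\Delta_{\R^2}+U_0$, and the transformation rule turns $\Delta_{\R^2}+U_0$ into $U_0(\Delta_{S^2}+1)$; moreover $\lambda_k\ge3$ for $k\ge2$. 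Hence, by Parseval,
$$
\int_{\R^2}\phi\,g\,dy=\sum_{k\ge2}\Bigl(1-\frac1{\lambda_k}\Bigr)\|\ttt\phi_k\|_{L^2(S^2)}^2\ \ge\ \frac23\,\|\ttt\phi^{\perp}\|_{L^2(S^2)}^2 ,
$$
where $\ttt\phi^{\perp}:=\ttt\phi-P_1\ttt\phi$ and $P_1$ is the $L^2(S^2)$–orthogonal projection onto $E_1$; note that the $k=1$ term has disappeared.

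It then remains to prove $\|\ttt\phi^{\perp}\|^2\ge c\,|\log R|^{-1}\|\ttt\phi\|^2$, equivalently $\|P_1\ttt\phi\|^2\le C\,|\log R|\,\|\ttt\phi^{\perp}\|^2$. Put $w_i:={\bf Z}_i\circ\Pi$ for $i=1,2,3$, so that \equ{orto40} for $\ell=1,2,3$ reads $\langle\ttt\phi,w_i\rangle_{L^2(S^2)}=0$. A direct evaluation of the corresponding integrals over $\R^2$ against the weight $U_0$ gives: (i) $\|w_i\|_{L^2(S^2)}^2=\int_{|y|<5R}y_i^2U_0\,dy\le C|\log R|$ for $i=1,2$ — the growth $\int_{B_{5R}}|y|^2U_0\sim|\log R|$ being the source of the logarithm — while $\|w_3\|_{L^2(S^2)}\le C$; and (ii) in an orthonormal basis of $E_1$ furnished by the normalized coordinate functions, the $3\times3$ matrix with entries $\langle(\text{basis element }j),w_i\rangle$ is, for all large $R$, invertible with inverse bounded uniformly in $R$: its diagonal entries are bounded below in modulus (the relevant integrals converge to nonzero limits as $R\to\infty$, up to an $O(R^{-\nu})$ error produced by $b_3$ in \equ{bbernie}), while the off–diagonal entries vanish by parity or are $O(R^{-\nu})$. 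From $\langle\ttt\phi,w_i\rangle=0$, $\ttt\phi=P_1\ttt\phi+\ttt\phi^{\perp}$ and $\ttt\phi^{\perp}\perp(E_0\oplus E_1)$ we get $\langle P_1\ttt\phi,w_i\rangle=-\langle\ttt\phi^{\perp},w_i\rangle$, hence $|\langle P_1\ttt\phi,w_i\rangle|\le\|\ttt\phi^{\perp}\|\,\|w_i\|\le C|\log R|^{1/2}\|\ttt\phi^{\perp}\|$ for each $i$. Inverting the matrix yields $\|P_1\ttt\phi\|\le C|\log R|^{1/2}\|\ttt\phi^{\perp}\|$, so $\|\ttt\phi\|^2=\|P_1\ttt\phi\|^2+\|\ttt\phi^{\perp}\|^2\le(C|\log R|+1)\|\ttt\phi^{\perp}\|^2$, which combined with the lower bound for $\int\phi g$ gives \equ{pass}.

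The delicate point is step (i)–(ii): one must verify that the transplanted cut-off functions $w_i$ have $L^2(S^2)$ norm exactly of order $|\log R|^{1/2}$ and no larger — this simultaneously produces and pins down the power of $|\log R|$ in \equ{pass} — and that their $E_1$–components remain uniformly nondegenerate as $R\to\infty$, which requires keeping careful track of the perturbation $b_3$ appearing in \equ{bbernie}–\equ{bernie0}. Everything else is bookkeeping with the spherical-harmonic expansion.
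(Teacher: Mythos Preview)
Your proof is correct and follows essentially the same route as the paper: transplant to $S^2$ via stereographic projection, observe that $\int_{\R^2}\phi g$ becomes a quadratic form on $L^2(S^2)$ that vanishes precisely on the first nontrivial eigenspace of $-\Delta_{S^2}$ and has a uniform spectral gap on its orthogonal complement, then use the orthogonality conditions \equ{orto40} to bound the $E_1$-component of $\ttt\phi$ by $|\log R|^{1/2}\|\ttt\phi^\perp\|$. The only visible discrepancy is a normalization: you write the first eigenvalue as $1$ and the quadratic form as $\sum(1-1/\lambda_k)\|\ttt\phi_k\|^2$, whereas the paper uses the standard unit sphere with first eigenvalue $2$ and writes $\int\phi g=\int_{S^2}\ttt\phi(\ttt\phi-2(-\Delta_{S^2})^{-1}\ttt\phi)=\sum(1-2/\lambda_j)\ttt\phi_j^2$; both are internally consistent and lead to the same conclusion. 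Your matrix-inversion phrasing of the final step is a slightly more explicit version of the paper's component-by-component estimate $0=\int_{B_R}\phi\,y_j=c_2\ttt\phi_j+O(|\log R|^{1/2})\|\ttt\phi^\perp\|$.
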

\begin{proof}
Let us set
$ \ttt \phi = U_0^{-1}\phi $. We recall that, after stereographic projection, we have that
$$
\int_{S^2} \ttt \phi^2  =  \int_{\R^2} \phi^2 U_0^{-1}, \quad
\int_{S^2} \ttt\phi = \int_{\R^2}\phi = 0 .
$$
Besides
$$
\int_{\R^2} \phi g =  \int_{S^2}  \ttt \phi ( \ttt \phi - 2(-\Delta_{S^2})^{-1} \ttt \phi ) .
$$
Expanding $ \ttt \phi$ in the orthonormal basis in $ L^2(S^2)$ of spherical harmonics   we get
$$
\ttt \phi =   \sum_{j=0}^\infty  \ttt\phi_j e_j (z) = \sum_{j=0}^3 \tilde  \phi_j e_j (z)  +\ttt\phi^\perp ,
$$
where $ -\Delta_{S^2} e_j = \la_j e_j$.
Here $ \la_0=0$ and $ e_0$ is constant, while  $ \la_1=\la_2=\la_3 = 2$, with $ e_j(z) =z_j$.
From \eqref{orto40}, we get $ \ttt \phi_0=0$. 
Thus
\be \int_{\R^2} \phi g
 =    \sum_{j=4}^\infty \left ( 1- \frac 2{\la_j} \right )\ttt\phi_j^2  \ \ge \  c_1\|\ttt\phi^\perp \| _{L^2(S^2)}^2
\label{bernie}\ee
for some uniform $c_1>0$. We also have
$$
0 = \int_{B_R} \phi y_j   =  c_2 \ttt\phi_j  +     O( \|\ttt \phi^\perp\|_{L^2(S^2)}) \, |\log R|^{\frac 12 }
$$
for some uniform $c_2>0$.  On the other hand, we have
$$
0 = \int_{\R^2} \phi {\bf Z}_3   =  \ttt\phi_3   +    O(R^{-\nu})  \|\ttt\phi \| _{L^2(S^2)}.
$$
From the above relations we get that for some $c>0$ independent of $R$,
\begin{align*}
\|\ttt \phi^\perp\|_{L^2(S^2)}^2 \, & \ge  \,  \|\ttt \phi\|^2_{L^2(S^2)}  - c \sum_{\ell = 1}^3 |\ttt \phi_\ell|^2 ,  \\
  & \ge\,  (1- O(R^{-2\nu}) ) \,\|\ttt \phi\|_{L^2(S^2)}^2 \, - \, c |\log R|\, \|\ttt \phi^\perp\|_{L^2(S^2)}^2.
\end{align*}
From here and \equ{bernie} it follows that
$$
\int_{\R^2} \phi g  \ge \frac{\gamma}{  |\log R|} \int_{S^2} \ttt \phi^2
$$
for some uniform $\gamma>0$, as desired.\end{proof}

\subsection{An $L^2$-weighted a priori estimate}
We let $f(v)= e^v $
and consider a linear transport equation of the form
\be\label{innerL}\left \{
\begin{aligned}
\varepsilon^2 \phi_t +  \nabla_y^\perp( \Gamma_0 + a_* +a)\cdot  \nabla_y ( &\phi  - f'( \Gamma_0 +a_*) \psi )
 \\ &\quad\ + \, E(y,t)\, = \, 0  \inn \R^2 \times (0,T),\\
&\qquad\quad\phi(\cdot,0)\,  =\, 0 \inn \R^2
\end{aligned}\right.
\ee
where
\be\label{psi1}
\psi(y,t) =  \frac 1{2\pi} \int_{\R^2}  \log \frac 1{|y-z|} \phi(z,t)\, dz.
\ee
and $\phi(y,t)$ satisfies the orthogonality conditions
\be
\begin{aligned}
\int_{\R^2} \phi (y,t){\bf Z}_\ell(y)\, dy \, &= \, 0, \quad l=0,1,2,3  \end{aligned}
\label{orto4}\qquad\foral t\in [0,T].\ee
where ${\bf Z}_\ell$ is defined in \equ{bbernie} with
\begin{equation}
\label{defR}
R= {1 \over \ve |\log \ve| }
\end{equation}
and $b_3=b_3(y,t) $ satisfying \equ{bernie0} for some $\nu>0$.
On the functions
  $a_*(y,t)$, and $a(y,t)$  we assume
\begin{equation}\label{nu0}
a_*(y,t), \, a (y,t) \, =0 \quad \hbox{for } |y|\ge 4R, \quad
\Delta_y (a + a_*) \in L^\infty (\R^2\times (0,T))
\end{equation}
and
for some numbers $C>0$, $\nu >0$,
\begin{align}
   |\pp_t a_*(y,t)| + | (1+ |y|) \,  \nn_y a_* (y,t)|  \ \le & \  C \ve^2(1+ |y|^2) \nonumber\\
|\nn_y a(y,t)|   \ \le & \   \ve^{2+\nu}.\label{nu}\end{align}

\begin{lemma}\label{lin}
There exists a constant $C>0$ such that for any
$a$, $a_*$ satisfying $\equ{nu}$-$\equ{Enu}$,  $R$ given by \eqref{defR}, all sufficiently small $\ve$
and any solution $\phi$ of $\equ{innerL}$-$\equ{orto4}$ with \be\label{aaa}\sup_{t\in [0,T]} \|U_0^{-\frac 12 } \phi(\cdot , t)\|_{L^2(\R^2)} \ < \ +\infty\ee
we have
\be \label{pass1}
\sup_{t\in [0,T]} \|U_0^{-\frac 12 } \phi(\cdot , t)\|_{L^2(\R^2)} \ \le \ C\,\ve^{-2} \, |\log \ve |^{1\over 2} \, \sup_{t \in [0,T]} \, \| E (\cdot,t)\,  U_0^{-1/2} \|_{L^2 (\R^2) }.
\ee

\end{lemma}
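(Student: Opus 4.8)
The plan is to prove \equ{pass1} by a weighted energy estimate, testing \equ{innerL} against the correct ``adjoint'' variable. Write $f(v)=e^{v}$, set $w:=\phi-f'(\Gamma_0+a_*)\psi$, so that \equ{innerL} reads $\ve^{2}\phi_t+\nabla_y^\perp(\Gamma_0+a_*+a)\cdot\nabla_y w+E=0$, and introduce
\[
\tilde g:=f'(\Gamma_0+a_*)^{-1}\phi-\psi=e^{-\Gamma_0-a_*}\phi-\psi=e^{-\Gamma_0-a_*}w .
\]
Multiply \equ{innerL} by $\tilde g$ and integrate over $\R^{2}$. For the time term, using the symmetry $\int\phi_t\psi=\tfrac12\tfrac{d}{dt}\int\phi\psi$ of the Newtonian potential \equ{newtonian} and the product rule for $\partial_t(e^{-a_*}U_0^{-1}\phi^{2})$, one obtains $\ve^{2}\int\phi_t\tilde g=\tfrac{\ve^{2}}{2}\tfrac{d}{dt}\tilde Q(t)+\tfrac{\ve^{2}}{2}\int e^{-\Gamma_0-a_*}(\partial_t a_*)\phi^{2}$, where $\tilde Q(t):=\int\phi\,\tilde g\,dy$; since $\partial_t a_*$ is supported in $\{|y|\le 4R\}$ and bounded there by $C\ve^{2}(1+|y|^{2})$ by \equ{nu}, the last integral is $O(\ve^{2}R^{2})\,\|U_0^{-1/2}\phi\|_{L^{2}}^{2}=O(|\log\ve|^{-2})\,\|U_0^{-1/2}\phi\|_{L^{2}}^{2}$, using $R=1/(\ve|\log\ve|)$.

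The heart of the argument is the transport term. Writing $\tilde g=e^{-\Gamma_0-a_*}w$ and $H:=\Gamma_0+a_*+a$,
\[
\int \nabla_y^\perp H\cdot\nabla_y w\;\tilde g\,dy=\tfrac12\int e^{-\Gamma_0-a_*}\,\nabla_y^\perp H\cdot\nabla_y(w^{2})\,dy=\tfrac12\int w^{2} e^{-\Gamma_0-a_*}\,\nabla_y^\perp H\cdot\nabla_y(\Gamma_0+a_*)\,dy
\]
after one integration by parts that annihilates the divergence-free part (legitimate because, in the function class where \equ{innerL} is solved, $\phi$ has enough spatial decay that $w^{2}e^{-\Gamma_0-a_*}\nabla^\perp H$ is integrable; cf. \equ{aaa}). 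Since $\nabla_y^\perp H\cdot\nabla_y(\Gamma_0+a_*)=\nabla_y^\perp(\Gamma_0+a_*+a)\cdot\nabla_y(\Gamma_0+a_*)=\nabla_y^\perp a\cdot\nabla_y(\Gamma_0+a_*)$, the $(\Gamma_0+a_*)$-self term vanishing by antisymmetry, this equals $\tfrac12\int w^{2} e^{-\Gamma_0-a_*}\,\nabla_y^\perp a\cdot\nabla_y(\Gamma_0+a_*)\,dy$. Now $|\nabla_y a|\le\ve^{2+\nu}$ by \equ{nu}, while $|\nabla_y(\Gamma_0+a_*)|\le C$ on $\{|y|\le 4R\}$ (where $a$ is supported) and $e^{-\Gamma_0-a_*}\le C\,U_0^{-1}$ there; combined with the auxiliary bound $\int w^{2}U_0^{-1}\,dy\le C\,\|U_0^{-1/2}\phi\|_{L^{2}}^{2}$ — which follows from $w=f'(\Gamma_0+a_*)\tilde g$, from $|\psi|\le C(1+|y|)^{-1+\sigma}\|U_0^{-1/2}\phi\|_{L^{2}}$ (estimate \equ{cota1psi}) and hence $\int U_0\psi^{2}\le C\|U_0^{-1/2}\phi\|_{L^{2}}^{2}$ — the transport term is $O(\ve^{2+\nu})\,\|U_0^{-1/2}\phi\|_{L^{2}}^{2}$. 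For the source term, Cauchy--Schwarz together with $\|U_0^{1/2}\tilde g\|_{L^{2}}\le C\,\|U_0^{-1/2}\phi\|_{L^{2}}$ (again by \equ{cota1psi}) gives $|\int E\tilde g|\le C\,\|E\,U_0^{-1/2}\|_{L^{2}}\,\|U_0^{-1/2}\phi\|_{L^{2}}$.

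To close, set $N(t):=\|U_0^{-1/2}\phi(\cdot,t)\|_{L^{2}}$. Lemma~\ref{passl} applies to $\phi(\cdot,t)$ because of the orthogonality conditions \equ{orto4}, and since $\tilde Q(t)=\int\phi(U_0^{-1}\phi-\psi)\,dy+\int(e^{-a_*}-1)U_0^{-1}\phi^{2}\,dy$ with the last term $O(\ve^{2}R^{2})N(t)^{2}=O(|\log\ve|^{-2})N(t)^{2}$, we get the coercivity $\tilde Q(t)\ge\tfrac{\gamma}{2|\log R|}N(t)^{2}\ge 0$ for $\ve$ small, with $|\log R|\sim|\log\ve|$. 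Assembling the three pieces,
\[
\tfrac{\ve^{2}}{2}\,\tfrac{d}{dt}\tilde Q(t)\ \le\ \tfrac{C\ve^{2}}{|\log\ve|}\,\tilde Q(t)+C\,\|E(\cdot,t)\,U_0^{-1/2}\|_{L^{2}}\,N(t),
\]
and since $\tilde Q(0)=0$ (as $\phi(\cdot,0)=0$), a Gronwall argument combined with $N(t)^{2}\le\tfrac{2|\log R|}{\gamma}\tilde Q(t)$ yields \equ{pass1}. The main obstacle is the transport-term computation: it is the precise choice $\tilde g=f'(\Gamma_0+a_*)^{-1}\phi-\psi$ that makes the a priori $O(1)$ transport contribution collapse — via $\nabla^\perp(\Gamma_0+a_*)\cdot\nabla(\Gamma_0+a_*)=0$ — to a remainder carrying the $\ve^{2+\nu}$ smallness of $\nabla_y a$. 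A secondary difficulty is the bookkeeping needed to guarantee that every error term carries enough powers of $\ve$ (and of $\ve^{\nu}$) to be absorbed against the $|\log R|^{-1}$-degenerate coercivity of $\tilde Q$, together with the justification of the integrations by parts from the decay of $\phi$ in the ambient function space.
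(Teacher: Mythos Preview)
Your argument is essentially the paper's own proof: the test function $\tilde g = f'(\Gamma_0+a_*)^{-1}\phi-\psi$ is exactly the paper's $g_1$, the energy identity you derive coincides with the paper's equation \equ{uu00}, and the transport--term cancellation $\nabla^\perp(\Gamma_0+a_*)\cdot\nabla(\Gamma_0+a_*)=0$ is the same mechanism that reduces that term to the $\ve^{2+\nu}$ remainder coming from $\nabla a$.

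There is one quantitative slip. You estimate the source term by $|\!\int E\tilde g\,|\le C\,\|E\,U_0^{-1/2}\|_{L^2}\,N(t)$ and then convert $N(t)$ to $\tilde Q(t)^{1/2}$ via Lemma~\ref{passl}, picking up a factor $|\log R|^{1/2}$. Carried through the Gronwall step this yields $N(t)\le C\ve^{-2}|\log\ve|\sup_t\|E\,U_0^{-1/2}\|_{L^2}$, i.e.\ one extra power of $|\log\ve|^{1/2}$ compared with \equ{pass1}. The remedy is to bound $\|U_0^{1/2}\tilde g\|_{L^2}$ directly by $C\,\tilde Q(t)^{1/2}$ rather than by $C\,N(t)$: after stereographic projection, $g=U_0^{-1}\phi-\psi$ has spherical--harmonic coefficients $g_j=(1-2/\lambda_j)\tilde\phi_j$, so modes $j=1,2,3$ drop out and for $j\ge 4$ one has $(1-2/\lambda_j)^2\le(1-2/\lambda_j)$, giving $\int_{\R^2} U_0 g^2\le \int_{\R^2}\phi g$; the passage from $g$ to $\tilde g=g_1$ costs only an $o(1)$ correction since $|e^{-a_*}-1|=O(\ve^2 R^2)=O(|\log\ve|^{-2})$. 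With this sharper bound the differential inequality becomes $\ve^2\dot\alpha\le o(\ve^2)\alpha + C\|E\,U_0^{-1/2}\|_{L^2}$ for $\alpha=\tilde Q^{1/2}$, and Gronwall plus $N\le C|\log R|^{1/2}\alpha$ gives exactly \equ{pass1}.
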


\begin{proof} Let us assume that
$$ \sup_{t \in [0,T]} \, \| E (\cdot,t)\,  U_0^{-1/2} \|_{L^2 (\R^2) } \, <\, +\infty $$
and define the functions
\[
U_1  = f'(\Gamma_0 + a_*) , \quad  U_1  g_1 =
\phi  - f'(\Gamma_0 + a_*) \psi  .
\]
We claim that, after multiplying equation \eqref{innerL} against $g_1$ and integrating in $\R^2$, we get
\begin{equation}\begin{split}\label{uu00}
{\ve^2\over 2} {d \over dt} &\left( \int_{\R^2} \phi g_1 \, dy \right) + {\ve^2 \over 2} \int_{\R^2} \phi^2  \,  {(U_1)_t \over U_1^2}  \, dy + \frac{1}{2}
\int_{\R^2}
\frac{\nabla U_1}{U_1} (\nabla_y^\perp a ) \,  U_1 g_1^2 \, dy\\
&
+ \int_{\R^2} E  g_1 \, dy =0.
\end{split}
\end{equation}
To prove \eqref{uu00}, we first observe that
\begin{equation}\begin{split}\label{uu0}
0&=\ve^2 \int_{\R^2} \phi_t  g_1 \, dy  +
\int_{\R^2} U_1^{-1} \nabla_y^\perp ( \Gamma_0 + a_* + a) \nabla(\frac{U_1^2 g_1^2}{2})  \, dy
+ \int_{\R^2} E   g_1 \, dy
\\
& =\ve^2 \int_{\R^2} \phi_t  g_1 \, dy  +
\frac{1}{2}
\int_{\R^2}
\frac{\nabla U_1}{U_1} (\nabla_y^\perp a ) \,  U_1 g_1^2  \, dy\\
&
+ \int_{\R^2} E  g_1 \, dy
\end{split}
\end{equation}
As before, we set  $\tilde \phi  = \phi U_0^{-1} $, and we recall that, after the stereographic projection \eqref{stereo}
$$
\int_{\R^2} \phi [ {\phi \over U_0} - \psi ] (\cdot ,t) \, dy  = \int_{S^2} \tilde \phi [ \tilde \phi - 2 (- \Delta_{S^2})^{-1} \tilde \phi ] (\cdot ,t) d\sigma .
$$
Direct computations give
\begin{equation*}\begin{split}
\int_{\R^2} \phi_t \, g_1  \, dy &=
\int_{\R^2} \phi_t  [ {\phi \over U_0} - \psi ] \, dy  +
\int_{\R^2}
\phi\,\phi_t \,
{U_0 - U_1 \over U_0 \, U_1 }  \, dy\\
&= \int_{S^2} \tilde \phi_t
[ \tilde \phi -2 (- \Delta_{S^2})^{-1} \tilde \phi ] \, d\sigma \, +\,  \int_{\R^2}  \phi\,\phi_t
\, {U_0 - U_1 \over U_0 \, U_1 }  \, dy\\
&= {1\over 2} {d \over dt}
 \left(\int_{\R^2}  \phi  [  \phi + \Delta_{\R^2}^{-1} \phi ]  d y \right) +
\int_{\R^2} {d \over dt}
\left( {\phi^2 \over 2} \right)  \,
{U_0 - U_1 \over U_0 \, U_1 }\, dy\\
&= {1\over 2} {d \over dt}
\left( \int_{\R^2} \phi g  \, dy \right) +
\int_{\R^2} {\phi^2 \over 2} \, {d \over dt}
\left({U_1 - U_0 \over U_0 U_1} \right) \, dy\\
&= {1\over 2} {d \over dt} \left( \int_{\R^2} \phi g  \, dy \right) + \int_{\R^2} {\phi^2 \over 2} \,  {(U_1)_t \over U_1^2}  \, dy.
\end{split}
\end{equation*}
Replacing this term in \eqref{uu0} we obtain \eqref{uu00}.

\medskip
Next, we estimate the last three terms in \eqref{uu00}.
 Since $U_1 = (1+ o(1)) \, U_0$, then from Lemma \ref{passl} and \eqref{pass} we  obtain
\begin{equation*}\begin{split}
\left| \int_{\R^2} E g_1 \, dy \right| &\leq \| E U_0^{-{1\over 2}} \|_{L^2(\R^2)} \, \| U_0^{1\over 2} g_1 \|_{L^2(\R^2)} \\&
\leq C \| E U_0^{-{1\over 2}} \|_{L^2(\R^2)} \, \| U_0^{1\over 2} g \|_{L^2(\R^2)} \\
&\leq C |\log R |\,  \| E U_0^{-{1\over 2}} \|_{L^2(\R^2)} \, \left( \int_{\R^2} \phi g_1 \right)^{1\over 2}.
\end{split}
\end{equation*}
On the other hand, using \eqref{nu0} and \eqref{nu}, we get
\begin{equation*}\begin{split}
\left| \int_{\R^2} \phi^2 {(U_1)_t \over U_1^2}  \, dy \right| &\leq C \int_{\R^2} \phi^2 U_0^{-1} |\pp_t a_*| \, dy \\
&\leq C\ve^2 (1+ R^2)\, \| \phi U_0^{-{1\over 2} } \|_{L^2(\R^2)}.
\end{split}
\end{equation*}
Using again Lemma \ref{passl}, \eqref{pass} and assumptions \eqref{nu}, we finally get
\begin{equation*}\begin{split}
\left| \int_{\R^2}
\frac{\nabla U_1}{U_1} (\nabla_y^\perp a ) \,  U_1 g_1^2 \, dy \right|&\leq C \ve^{2+\nu}  |\log R| \int_{\R^2} \phi^2 U_0^{-1} \, dy.
\end{split}
\end{equation*}

\medskip
Then
\begin{equation*}\begin{split}
\varepsilon^2
\frac{d}{d t} \int_{\R^2} g_1 \phi \, dy&
\leq \,C \ve^2 \left( \ve^2 (1+R^2) \log R + \ve^\nu |\log R|^2 \right)
\int_{\R^2} g_1 \phi \, dy \\
&+\, \| E \,  U_0^{-1/2} \|_{L^2 (\R^2)} \left( \int_{\R^2} g_1 \phi \, dy \right)^{1/2}\\
&\leq\,  C \ve^4 \, R^2 \, \log R
\int_{\R^2} g_1 \phi \, dy + \| E \,  U_0^{-1/2} \|_{L^2 (\R^2)} \left( \int_{\R^2} g_1 \phi \, dy \right)^{1/2}.
\end{split} \end{equation*}
Let us set  $$\alpha(t)\, =\,  \left (\int_{\R^2} g_1 \phi \, dy \right)^{1\over 2}, \quad  A_\ve = C\,  \ve^2 \, R^2 \,  \log R  .  $$
Then we find
\[
\varepsilon^2 \frac{d \alpha}{d t }
\, \leq\,  \ve^2 A_\ve \alpha + \| E (\cdot,t)\,  U_0^{-1/2} \|_{L^2}.
\]
Since $R$ satisfies \eqref{defR} we have that $A_\ve =o(1)$ as $\ve \to 0$.
Gronwall's inequality then yields
\[
\alpha(t) \,\leq \,  C\,\ve^{-2} \,
\sup_{0\le t \leq T}  \| E (\cdot,t)\,  U_0^{-1/2} \|_{L^2}.
\]
This inequality and  Lemma \ref{passl} yield \eqref{pass1}.
\end{proof}

\medskip
We consider a class of functions $E(y,t)$ such that  for a number $0<\beta<1$ we have
\be\label{Enu}
\|E\|_{3+\beta}  :=   \sup_{(y,t)\in \R^2\times [0,T]} | (1+|y|)^{3+\beta} E(y,t) \| < + \infty .
\ee
We observe that
$$
   \sup_{t \in [0,T]} \, \| E (\cdot,t)\,  U_0^{-1/2} \|_{L^2 (\R^2) } \ \le \  C\,\|E\|_{3+\beta} ,
$$
hence Lemma \ref{lin} is applicable.

\begin{lemma}\label{lin1} Under the assumptions of Lemma \ref{lin}, given an arbitrarily small $\sigma >0$ we have that for some $0<\alpha<1$ and  all small $\ve$,

\be\label{pass2}
\begin{aligned}
& |\psi(y,t)| \, +\, (1+|y|) |\nn_y \psi (y,t)|  + (1+|y|)^{1+\alpha}  [\nn \psi(\cdot,t) ]_\alpha (y) \\ \, & \le \, \frac {\ve^{-2-\sigma} }{1+|y|}   \, \|E\|_{3+\beta} \foral (y,t) \in \R^2\times[0,T].
\end{aligned}
\ee
where $\psi(y,t)$ is given by $\equ{psi1}$ where $\phi(y,t)$ is a solution of $\equ{innerL}$-$\equ{orto4}$ satisfying  $\equ{aaa}$.
\end{lemma}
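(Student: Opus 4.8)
The plan is to combine the sharp weighted $L^2$ control of $\phi$ from Lemma~\ref{lin} with a crude pointwise (weighted $L^\infty$) control of $\phi$ obtained from the transport structure, and then to interpolate between the two by Lemma~\ref{interpolacion}, landing exactly on \equ{pass2}. Throughout I would write $\|\psi(\cdot,t)\|_*:=\sup_{y\in\R^2}(1+|y|)\big[\,|\psi|+(1+|y|)|\nn_y\psi|+(1+|y|)^{1+\alpha}[\nn\psi]_\alpha\,\big]$, so that \equ{pass2} is the statement $\sup_t\|\psi(\cdot,t)\|_*\le C\ve^{-2-\sigma}\|E\|_{3+\beta}$. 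First, since by the observation preceding the statement $\sup_t\|E(\cdot,t)U_0^{-1/2}\|_{L^2}\le C\|E\|_{3+\beta}$, Lemma~\ref{lin} is applicable and gives
\[
N_2:=\sup_{t}\|U_0^{-1/2}\phi(\cdot,t)\|_{L^2(\R^2)}\ \le\ C\,\ve^{-2}|\log\ve|^{1/2}\,\|E\|_{3+\beta}.
\]
Because the $\ell=0$ condition in \equ{orto4} forces $\int_{\R^2}\phi(\cdot,t)=0$, Lemma~\ref{interpolacion} applies to $\phi(\cdot,t)$ and yields, for any fixed small $\sigma_1\in(0,1)$, $\sup_t\|\psi(\cdot,t)\|_*\le C_{\sigma_1}M_1^{\sigma_1}N_2^{1-\sigma_1}$, where $M_1:=\sup_t\|\phi(\cdot,t)\|_{3+\beta}$, which we may assume finite since \equ{innerL} is posed in the class of functions with finite $\|\cdot\|_{3+\beta}$ norm.

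Next I would produce a crude a priori bound for $M_1$. Moving the zero–order term of \equ{innerL} to the right, with $U_1:=f'(\Gamma_0+a_*)$,
\[
\ve^2\phi_t+\nn_y^\perp(\Gamma_0+a_*+a)\cdot\nn_y\phi\ =\ \tilde E,\qquad \tilde E:=-E+\nn_y^\perp(\Gamma_0+a_*+a)\cdot\nn_y(U_1\psi),
\]
which is a genuine transport equation with perturbation $\RR:=a_*+a$. By \equ{nu0}--\equ{nu}, $\RR$ vanishes for $|y|$ beyond the cutoff radius required in Lemma~\ref{transport1-new}, $|\nn_y\RR|\le C\ve^2(1+|y|)$, and $\Delta_y\RR\in L^\infty$ gives the needed log–Lipschitz regularity. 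Using $U_1\sim(1+|y|)^{-4}$ and $|\nn U_1|\lesssim(1+|y|)^{-5}$ on the support of $\RR$ (both uniform in $\ve$ because $\ve^2(1+|y|^2)$ is bounded there), together with the crude decay $|\psi|\le\|\psi\|_*(1+|y|)^{-1}$, $|\nn\psi|\le\|\psi\|_*(1+|y|)^{-2}$, one checks term by term that every contribution of $\nn^\perp(\Gamma_0+a_*+a)\cdot\nn(U_1\psi)$ decays faster than $(1+|y|)^{-(3+\beta)}$, so $\|\tilde E\|_{3+\beta}\le C(\|E\|_{3+\beta}+\sup_t\|\psi(\cdot,t)\|_*)$. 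Applying Lemma~\ref{transport1-new} with $p=\infty$ and weight exponent $-(3+\beta)$ then gives
\[
M_1\ \le\ C\ve^{-2}\|\tilde E\|_{3+\beta}\ \le\ C\ve^{-2}\big(\|E\|_{3+\beta}+\sup_t\|\psi(\cdot,t)\|_*\big).
\]

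To close the loop I would insert the first–step bound $\sup_t\|\psi\|_*\le C_{\sigma_1}M_1^{\sigma_1}N_2^{1-\sigma_1}$ into the last inequality, getting $M_1\le C\ve^{-2}\|E\|_{3+\beta}+C\ve^{-2}M_1^{\sigma_1}\big(\ve^{-2}|\log\ve|^{1/2}\|E\|_{3+\beta}\big)^{1-\sigma_1}$. A weighted Young inequality with small parameter comparable to $\ve^{2}$ absorbs the $M_1$–term into $\tfrac12 M_1$ (here the finiteness of $M_1$ is used), leaving $M_1\le C\ve^{-4-\sigma'}\|E\|_{3+\beta}$ with $\sigma'>0$ arbitrarily small once $\sigma_1$ is chosen small. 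Feeding this back into Lemma~\ref{interpolacion} with a fresh interpolation exponent $\sigma_2\in(0,1)$ small enough and absorbing $|\log\ve|^{1/2}$ into a power $\ve^{-\sigma/2}$,
\[
\sup_t\|\psi(\cdot,t)\|_*\ \le\ C\big(\ve^{-4-\sigma'}\|E\|_{3+\beta}\big)^{\sigma_2}\big(\ve^{-2}|\log\ve|^{1/2}\|E\|_{3+\beta}\big)^{1-\sigma_2}\ \le\ C\ve^{-2-\sigma}\|E\|_{3+\beta},
\]
which is \equ{pass2} after renaming $\sigma$ (the $\alpha\in(0,1)$ there is the one furnished by Lemma~\ref{interpolacion}).

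The main obstacle is the genuine coupling $\phi\leftrightarrow\psi=(-\Delta)^{-1}\phi$: the only $\ve$–sharp control available is the weighted $L^2$ bound of Lemma~\ref{lin}, whereas the transport representation naturally delivers only weighted $L^\infty$ bounds, and a naive iteration between the two loses too many powers of $\ve$. The interpolation Lemma~\ref{interpolacion} is the device that trades a very crude (large negative power of $\ve$) pointwise bound against the sharp $L^2$ bound; the delicate points are (i) guaranteeing a priori that $\|\phi\|_{3+\beta}$ is finite so that the Young–inequality absorption is legitimate, and (ii) carefully book-keeping the powers of $\ve$ and $|\log\ve|$ through the two applications of Lemma~\ref{interpolacion} and the Young step so that the bootstrap closes with precisely the loss $\ve^{-2-\sigma}$ and with a clean linear dependence on $\|E\|_{3+\beta}$.
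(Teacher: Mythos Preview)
Your argument is essentially correct and reaches the same conclusion, but it follows a different route from the paper's and carries one extra hypothesis that the paper's proof avoids.

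The paper also rewrites \equ{innerL} as a pure transport equation for $\phi$ with right-hand side $\tilde E$, but then applies Lemma~\ref{transport1-new} in a weighted $L^p$ space (for a fixed $2<p<\tfrac{2}{1-\beta}$) rather than in $L^\infty$. The point of working in $L^p$ is that the $\psi$-dependent part of $\tilde E$ can be estimated \emph{directly} in terms of $\|U_0^{-1/2}\phi\|_{L^2}$ via the $L^p$ gradient bound \equ{gradp} and the pointwise bound \equ{cota1psi}, without ever invoking $\|\phi\|_{3+\beta}$ or $\|\psi\|_*$. This yields $\sup_t\|U_0^{1/p-1}\phi\|_{L^p}\le C\ve^{-4}|\log\ve|^{1/2}\|E\|_{3+\beta}$ with no circularity, and a single H\"older interpolation between this $L^p$ bound and the $L^2$ bound from Lemma~\ref{lin} (exactly the interpolation underlying Lemma~\ref{interpolacion}) gives a weighted $L^q$ bound with $q>2$ close to $2$; \equ{gradtotal} then delivers \equ{pass2}.

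By contrast, your $L^\infty$ route bounds the $\psi$-part of $\tilde E$ by $\|\psi\|_*$, which via Lemma~\ref{interpolacion} feeds back $\|\phi\|_{3+\beta}$, and you then close the loop with Young's inequality. This is fine once $M_1=\sup_t\|\phi\|_{3+\beta}<\infty$ is known, but note that the lemma as stated only assumes \equ{aaa}; the finiteness of $M_1$ is not a formal hypothesis (it is, of course, always satisfied in the applications in \S 7.4). The paper's $L^p$ device buys precisely the elimination of this a priori assumption and of the bootstrap step. A minor remark: in your Young step the absorption parameter can simply be $\epsilon=\tfrac12$; there is no need for it to be of order $\ve^2$, and with $\epsilon=\tfrac12$ the resulting exponent is still $\ve^{-4-\sigma'}$ as you claim.
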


\begin{proof}

We have that $\phi(y,t)$ satisfies the transport equation
\be\label{innerL1}\left \{
\begin{aligned}
\varepsilon^2 \phi_t +  \nabla_y^\perp( \Gamma_0 + a_* +a)\cdot  \nabla_y  &\phi + \, \ttt E(y,t)\, = \, 0  \inn \R^2 \times (0,T),\\
&\quad\quad\ \phi(\cdot,0)\,  =\, 0 \inn \R^2
\end{aligned}\right.
\ee
where
$$
\ttt E(y,t) \,= \,  E(y,t)
- \nabla_y^\perp( \Gamma_0 + a_* +a)\cdot  \nabla_y ( f'( \Gamma_0 +a_*) \psi ).
$$
Let us fix a number $p$ with $2<p< \frac 2{1-\beta}$. Then we have that
$$
\sup_{t\in [0,T]}  \| U_0^{\frac 1p -1} E (\cdot, t)\|_{L^p(\R^2)} \ \le \ C \,\|E\|_{3+\beta}.
$$
We claim that
\begin{equation}\label{uff2}
\sup_{t\in [0,T]} \| U_0^{{1\over p}-1} \phi(\cdot, t) \|_{L^p (\R^2 )} \leq C \ve^{-4} \,  |\log \ve |^{1\over 2} \,\sup_{t\in [0,T]}  \| U_0^{-{1\over 2}} E(\cdot, t)  \|_{L^2 (\R^2) } .
\end{equation}
Let us postpone the proof of \eqref{uff2}. Interpolating as in the proof of
Lemma \ref{interpolacion} we see that for
some $q= q(\mu)>2$ we have
$$
\| U_0^{{1\over q}-1} \phi(\cdot, t)  \|_{L^q (\R^2)} \leq \| U_0^{-{1\over 2}} \phi(\cdot, t)  \|_{L^2 (\R^2) }^{1-\mu} \, \| U_0^{{1\over p}-1} \phi(\cdot, t)  \|_{L^p (\R^2)}^{\mu} .
$$
Using estimates \eqref{uff2} and \eqref{pass1} in Lemma \ref{lin}, we conclude that
$$
\sup_{t\in [0,T]} \| U_0^{{1\over q}-1} \phi \|_{L^q (\R^2)} \,\leq \, \ve^{-2-\frac \sigma 2 } \, |\log \ve |^{1\over 2} \sup_{t\in [0,T]} \| U_0^{-{1\over 2}} E \|_{L^2 (\R^2)} ,
$$
and then \equ{pass2} directly follows from \eqref{gradtotal}.
Finally, let us prove estimate
\eqref{uff2}.  Since $\phi$ solves equation\eqref{innerL1}, Lemma \ref{transport1-new} applies to yield
\begin{equation}\label{uff0}
\| U_0^{{1\over p}-1} \phi \|_{L^p (\R^2) } \leq C \ve^{-2} \| U_0^{{1\over p}-1} \ttt E \|_{L^p (\R^2) }.
\end{equation}
Let us estimate this weighted $L^p$ norm for the second term in $\ttt E$. We have
$$
\big |\nabla_y^\perp( \Gamma_0 + a_* +a)\cdot  \nabla_y ( f'( \Gamma_0 +a_*) \psi )\big |\ \le \  C\, \big [ \frac 1{1+ |y|^5} |\nn \psi| + \frac 1{1+ |y|^6}|\psi|\, \big ].
$$
Since
$$
\left \|\, U_0^{{1\over p}-1}  \frac {|\nn \psi|}{1+ |y|^5} \, \right \|_{L^p (\R^2) }\, \leq\, C \left \| U_0^{-{1\over 2} + {1\over 2p} } |\nabla \psi | \, \right \|_{L^p (\R^2) },
$$
using  \eqref{gradp}  we get
$$
\left \| U_0^{{1\over p}-1}  \frac {|\nn \psi|}{1+ |y|^5}\, \right  \|_{L^p (\R^2) }\,  \leq\,  C \,
\left \| U_0^{-{1\over 2}} \phi  \right \|_{L^2 (\R^2) }
$$
and from \eqref{cota1psi},
$$
\left \| U_0^{{1\over p}-1}  \frac {| \psi|}{1+ |y|^6} \, \right \|_{L^p (\R^2) } \,\leq\, C \left
\| U_0^{-{1\over 2}} \phi  \right \|_{L^2 (\R^2) }.
$$
Combining the above estimates, we conclude
$$
\sup_{t\in [0,T]}  \| U_0^{\frac 1p -1} \ttt E (\cdot, t)\|_{L^p(\R^2)} \ \le C \left( \| E \|_{3+\beta} +
 \sup_{t\in [0,T]} \| U_0^{-{1\over 2}} \phi | \|_{L^2 (\R^2) } \right).
$$
From this last estimate, together with \eqref{uff0} and
the result of Lemma \ref{lin}, we get \eqref{uff2}. The proof is concluded.
\end{proof}

As a consequence of the above result we can also get an $L^\infty$-weighted estimate for $\phi$.
\begin{corollary} \label{co71} Under the assumptions of Lemma \ref{lin1}, we also have the estimate
\be \label{pass3}
|\phi(y,t)| \ \le \  C\, \Big[  \frac {\ve^{-2}}  {1+|y|^{3+\beta}}  +  \frac {\ve^{-4-\sigma}}  {1+|y|^{7}}   \Big ]\,  \|E\|_{3+\beta}.
\ee
\end{corollary}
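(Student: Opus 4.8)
The plan is to use the fact, established within the proof of Lemma~\ref{lin1}, that $\phi$ solves the \emph{linear} transport equation \equ{innerL1},
$$
\ve^2 \phi_t +  \nn_y^\perp( \Gamma_0 + a_* +a)\cdot  \nn_y  \phi + \ttt E(y,t) \, = \, 0 , \qquad \phi(\cdot,0)=0 ,
$$
with forcing term $\ttt E = E - \nn_y^\perp( \Gamma_0 + a_* +a)\cdot  \nn_y ( f'( \Gamma_0 +a_*) \psi )$, and then to run the Duhamel representation formula \equ{phi} for this equation against a \emph{pointwise} (rather than $L^p$) bound on $\ttt E$. The extra factor $\ve^{-2}$ in \equ{pass3} will come, as usual, from the time rescaling $t=\ve^2\tau$ with $\tau\le \ve^{-2}T$.

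First I would bound $\ttt E$ pointwise. By \equ{Enu} the term $E$ itself contributes at most $C(1+|y|)^{-3-\beta}\|E\|_{3+\beta}$. For the second term, recall that $a_*$ and $a$ vanish for $|y|\ge 4R$, while on the support $|y|\le 4R=4(\ve|\log\ve|)^{-1}$ one has $\ve^2(1+|y|)^2\le 1$, so \equ{nu} gives $|\nn_y a_*|\le C\ve^2(1+|y|)\le C(1+|y|)^{-1}$, $|\nn_y a|\le \ve^{2+\nu}$, and, writing $f'(\Gamma_0+a_*)=U_0e^{a_*}$ with $e^{a_*}$ bounded, $|U_0\,\nn_y a_*|\le C\ve^2(1+|y|)^{-3}\le C(1+|y|)^{-5}$. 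Exactly as in the proof of Lemma~\ref{lin1} this yields
$$
\big|\nn_y^\perp( \Gamma_0 + a_* +a)\cdot  \nn_y ( f'( \Gamma_0 +a_*) \psi )\big|\ \le \ C\Big[ \frac{|\nn_y\psi|}{1+|y|^5} + \frac{|\psi|}{1+|y|^6}\Big] .
$$
Inserting the weighted bound $|\psi(y,t)|+(1+|y|)\,|\nn_y\psi(y,t)|\le \ve^{-2-\sigma}(1+|y|)^{-1}\|E\|_{3+\beta}$ from Lemma~\ref{lin1} gives
$$
|\ttt E(y,t)|\ \le\ C\Big[ \frac{1}{1+|y|^{3+\beta}} + \frac{\ve^{-2-\sigma}}{1+|y|^{7}}\Big]\,\|E\|_{3+\beta} \qquad\foral (y,t)\in\R^2\times[0,T] .
$$

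Finally I would feed this into the solution formula. Since $\RR:=a_*+a$ satisfies \equ{papa-new}, the characteristics $\bar y(s;\tau,y)$ associated with \equ{innerL1} obey the two-sided bound \equ{bounds1}, hence $1+|\bar y(s)|\ge c\,(1+|y|)$ uniformly in $s\in(0,\tau)$ for a fixed $c>0$. The representation formula \equ{phi} then gives
$$
|\phi(y,\tau)|\ \le\ \int_0^\tau \big|\ttt E(\bar y(s;\tau,y),\ve^2 s)\big|\,ds \ \le\ C\,\tau\Big[ \frac{1}{1+|y|^{3+\beta}} + \frac{\ve^{-2-\sigma}}{1+|y|^{7}}\Big]\|E\|_{3+\beta} ,
$$
and since $\tau=\ve^{-2}t\le \ve^{-2}T$ this is precisely \equ{pass3}. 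The only point needing genuine care is the commutator-type estimate for $\nn_y^\perp(\Gamma_0+a_*+a)\cdot\nn_y(f'(\Gamma_0+a_*)\psi)$ displayed above: one must verify that neither $\nn_y^\perp a_*$ nor the factor $\nn_y e^{a_*}=e^{a_*}\nn_y a_*$ destroys the $(1+|y|)^{-5}$, $(1+|y|)^{-6}$ decay, which is exactly where the relation $\ve^2(1+|y|)^2\le 1$ on $\{|y|\le 4R\}$ enters. This computation was already carried out inside the proof of Lemma~\ref{lin1}, and here it needs only to be repeated with the weighted $L^\infty$ bound on $\psi$ in place of the $L^2$/$L^p$ bounds used there; everything else is direct substitution.
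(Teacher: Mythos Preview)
Your proof is correct and follows exactly the same route as the paper: recognize that $\phi$ solves the pure transport equation \equ{innerL1}, use the weighted $L^\infty$ bound on $\psi$, $\nn_y\psi$ from Lemma~\ref{lin1} to obtain the pointwise estimate $|\ttt E|\le C[(1+|y|)^{-3-\beta}+\ve^{-2-\sigma}(1+|y|)^{-7}]\|E\|_{3+\beta}$, and then propagate this decay through the transport equation. The only cosmetic difference is that the paper invokes Lemma~\ref{transport1-new} with $p=+\infty$ at the last step, whereas you unpack that lemma by appealing directly to the characteristic bound \equ{bounds1} and the Duhamel formula \equ{phi}; the content is identical.
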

\begin{proof}
Recall that $\phi(y,t)$ solves \equ{innerL1}. From Lemma \ref{lin1}, we get
$$
|\ttt E (y,t) | \le C \Big[  \frac   1{1+|y|^{3+\beta}}  +  \frac {\ve^{-2-\sigma}}  {1+|y|^{7}}   \Big ]\,  \|E\|_{3+\beta}.
$$
Estimate \eqref{pass3} then follows as a direct application of Lemma \ref{transport1-new} for $p=+\infty$.
\end{proof}

\subsection{Estimates for a projected problem}
Here we consider the ``projected version'' of Problem  \equ{innerL},
\be\label{innerL2}\left \{
\begin{aligned}
\varepsilon^2 \phi_t +  \nabla_y^\perp( \Gamma_0 + a_* +a)\cdot  \nabla_y ( \phi & - f'( \Gamma_0 +a_*) \psi )\, + \, E(y,t)
\\ &  = \, \sum_{l=0}^3 c_{l}(t) \mathcal  Z_{1l}(y)  \inn \R^2 \times (0,T)\\
&\qquad\quad\phi(\cdot,0)\,  =\, 0 \inn \R^2
\end{aligned}\right.
\ee
under the same assumptions on $a$ and $a_*$ in \equ{innerL}.
Here $\psi$ is given by \equ{psi1} and $\phi$ satisfies the orthogonality conditions \equ{orto4}.
We recall, from \equ{Z1l}, \equ{Z1l2},
\begin{align*}
{\mathcal Z}_{10}(y) = U_0(y),& \quad
{\mathcal Z}_{11}(y)  = \pp_{y_1} U_0(y),\\
{\mathcal Z}_{12}(y) = \pp_{y_2} U_0(y), &\quad
 {\mathcal Z}_{13}(y)  = 2U_0(y) + \nn_y U_0(y)\cdot y .
\end{align*}

The functions $c_l(t)$ are precisely those compatible with relations \equ{orto4}, that is
 \begin{align}
\gamma_l c_l(t)\, &=  \,  \int_{\R^2}  \big [ E(\cdot ,t)+
\nabla_y^\perp( \Gamma_0 + a_* + a)\cdot  \nabla_y ( \phi - f'(\Gamma_0 + a_*) \psi )\big ](\cdot, t) \, {\bf Z}_\ell\, dy, \nonumber \\  & \qquad \ell =0,1,2,\label{cll}  \\
 \gamma_3c_3(t)\, &=\, - \ve^2   \int_{\R^2} (\pp_t b_{3}) \phi \, dy  - \sum_{\ell=0}^2 c_\ell (t) \int_{\R^2} b_3 \mathcal Z_{1\ell} \, dy \label{c33} \\ &\ \ +   \int_{\R^2}  \big [ E(\cdot ,t) +
\nabla_y^\perp( \Gamma_0 + a_* + a)\cdot  \nabla_y ( \phi - f'(\Gamma_0 + a_*) \psi )\big ](\cdot, t) \, {\bf Z}_3\, dy \, .
\nonumber \end{align}
where
$$
\gamma_\ell\, =\, \int_{\R^2} \mathcal Z_{1\ell} {\bf Z}_\ell \, dy, \quad l=0,1,2,3.\\
$$
Our purpose is to give  alternative expressions for the functions $c_l(t)$ that do not involve derivatives of $\phi$, always assuming that $\phi$ has sufficient decay in the $y$-variable. We consider the case in which for some $0<\beta<1$ we have a solution of \equ{innerL2} with
\be
\|E\|_{3+\beta}\ <\ +\infty, \quad   \|\phi\|_{3+\beta} <+\infty.
\label{EEnu}\ee
where this norm was defined in \equ{Enu}.

It is useful to notice that thanks to Corollary \ref{co71} we have the validity of the pointwise estimate
$$
|\phi(y,t)| \ \le \  C\, \Big[  \frac {\ve^{-2}}  {1+|y|^{3+\beta}}  +  \frac {\ve^{-4-\sigma}}  {1+|y|^{7}}   \Big ]\, \mathcal M
$$
where
$$
\mathcal M\, :=\,
 \|E\|_{3+\beta} +  \sum_{\ell =0}^3 \| c_\ell \|_{L^\infty (0,T)} \,.
$$
An observation that will later be useful on the behavior of $\psi$ is the following. Since $\psi$ satisfies
$$
-\Delta \psi = \phi \inn \R^2 \times [0,T], \quad \psi(y,t)\to 0 \ass |y|\to +\infty,
$$
 then decomposing $(\psi,\phi)$ in Fourier series as
 \be\label{fou1}
 \psi(y,t)  =    \sum_{k=0}^\infty \psi_k(r,t) e^{ ik\theta} , \quad \phi(y,t)  =    \sum_{k=0}^\infty \phi_k(r,t) e^{ ik\theta}
\ee
we have, using the variation of parameters formula for the corresponding ODEs for modes $k=\pm 1$,
$$
\psi_{\pm 1} (r,t)  =   \frac 1r  \int_0^r \rho\, {d\rho}\int_\rho ^\infty \phi_{\pm 1} (s,t) \, ds
$$
\be \label{72}
\psi_{\pm 1} (r,t)  =   \frac {A_{\pm 1}(t)} r  +   \mathcal M \big (  r^{-5}  O( \ve^{-4-\sigma} )   +    r^{-1-\beta} O(\ve^{-2}) \big )
\ee
for certain numbers $A_{\pm 1}(t)$.

\medskip
At this point we will be more explicit in the choice of the function $a_*(y,t)$ satisfying \equ{nu} and $b_3(y,t)$ in the definition of $\bf Z_3$.
We let
$$
{\bf B} (y,t) :=    \Delta_y(\Gamma_0 + a_*)  +  f(\Gamma_0 + a_*)
$$
and assume that for some $\nu>0$,
\be
|{\bf B} (y,t)|  +  (1+|y|)  |\nn_y {\bf B} (y,t)|  \ \le\  \ve^{2+ \nu} (1+|y|)^{-1-\nu}  \foral (y,t)\in \R^2\times [0,T].
\label{uuu} \ee
Let $Z_3(y) =  \frac {|y|^2 -1} {1+ |y|^2} $. Then for a certain function $G(s)$ we can write
$
Z_3(y) =     G( \Gamma_0(y) ).
$
$b_3$ will be chosen in such a way that the relation   $$G(\Gamma_0 + a_*) \approx  {\bf Z}_3 :=  Z_3+ b_3 $$holds, which we assume by setting
$$
b_3(y,t)\, := \,  G'(\Gamma_0(y))\, a_*(y,t)
$$
We observe that $G'(\Gamma_0(y)) \sim |y|^{-2}$ for large $|y|$, hence   $$b_3(y,t)= O(\ve^2), \quad  \nn_yb_3(y,t)= O(\ve^2 |y|^{-1})$$
and  $${\bf Z}_3(y,t) =  G(\Gamma_0(y) + a_*(y,t)\, )  + O(\ve^4|y|^2 ) ,  $$
$$\nn_y {\bf Z}_3(y,t) =  \nn_y G(\Gamma_0(y) + a_*(y,t)\, )  + O(\ve^4|y| ) . $$
After testing Equation \equ{innerL2} against the functions  ${\bf Z}_\ell$ and integrate in space variable we arrive at the expressions
 \equ{cll}-\equ{c33} for the functions $c_l(t)$.
To estimate $c_3(t)$ we we integrate by parts the last quantity in \equ{c33}  and get
\begin{align*}
 \int_{\R^2}
\nabla_y^\perp( \Gamma_0 + a_* + a)\cdot  \nabla_y ( \phi -f'(\Gamma_0 + a_*) \psi )(\cdot, t) \, {\bf Z}_3 \, dy \, .
\\
 = \ - \int _{\R^2}  ( \phi  - f'(\Gamma_0 + a_*) \psi )(\cdot, t) \nn_y {\bf Z}_3\cdot \nabla_y^\perp( \Gamma_0 + a_* + a).
\end{align*}
We have that
\begin{align*}
\nn_y {\bf Z}_3\cdot \nabla_y^\perp( \Gamma_0 + a_* + a) & =
 \nn_y {\bf Z}_3\cdot \nabla_y^\perp a  +   O(\ve^4 |y| ) \nabla_y^\perp( \Gamma_0 + a_* + a)\\   & = O(\ve^4) + O(\ve^{2+\nu} |y|^{-3}) .
\end{align*}
We conclude that
$$
\gamma _3c_3( t)   =   \int_{\R^2} E  (\cdot, t) \, {\bf Z}_3 \, dy \,  +  O( \ve^{2+\nu}) \|\phi(\cdot , t) U_0^{-\frac 12} \|_{L^2(\R^2)} + O(\ve^2) \sum_{\ell=0}^2  |c_\ell (t)|
$$
In a similar way, we obtain, for $l=0$, we compute
\begin{align*}
&\int_{\R^2}  \big [
\nabla_y^\perp( \Gamma_0 + a_* + a)\cdot  \nabla_y ( \phi- f'(\Gamma_0 + a_*) \psi )\big ](\cdot, t) \,  {\bf Z}_0\, dy \\
& = \ -  \int_{\R^2}  ( \phi   + f'(\Gamma_0 + a_*) \psi ) \nabla_y^\perp(\Gamma_0 +  a_* + a)\cdot  \nabla_y 1  \\
& = 0 .
\end{align*}
Hence
$$
\gamma _0c_0( t)   =   \int_{\R^2} E  (\cdot, t) \, {\bf Z}_3 \, dy \, .
$$
On the other hand, for $l=1$ we have
\begin{align*}
&\int_{B_{5R}}  \big [
\nabla_y^\perp( \Gamma_0 + a_* + a)\cdot  \nabla_y ( \phi - f'(\Gamma_0 + a_*) \psi )\big ](\cdot, t) \,  {\bf Z}_1\, dy \\
& = \   \int_{{B_{5R}}}  ( \Delta\psi    + f'(\Gamma_0 + a_*) \psi ) \nabla_y^\perp(\Gamma_0 +  a_* + a)\cdot e_1\, dy   \\
& + \,  \int_{\pp B_{5R}}  ( \phi - f'(\Gamma_0 ) \psi ) \nabla_y^\perp \Gamma_0 \cdot \nu(y) y_1\, d\sigma  \\
& =\    \int_{{B_{5R}}}  ( \Delta\psi    + f'(\Gamma_0 + a_*) \psi )  \pp_{y_2}(\Gamma_0 +  a_* )\, dy  \\
&  +  \,  O(\ve^{2+\nu}) \|\phi(\cdot, t) U_0^{-\frac 12} \|_{L^2(\R^2)}
\end{align*}
Now, we see that
$$
\int_{{B_{5R}}} \Delta\psi \pp_{y_2}(\Gamma_0 +  a_* )\, dy =  \int_{{B_{5R}}}\psi  \Delta \pp_{y_2}(\Gamma_0 +  a_* )\, dy  +
\int_{\pp B_{5R}}  ( \pp_\nu \psi  \pp_{y_2}\Gamma_0 -  \psi \pp_\nu \pp_{y_2}\Gamma_0)\, d\sigma
$$
To carry out this analysis, we observe that the integral
$$
I:=  \int_{\pp B_{5R}}  ( \pp_\nu \psi  \pp_{y_2}\Gamma_0 -  \psi \pp_\nu \pp_{y_2}\Gamma_0)\, d\sigma
$$
has sufficient smallness as $\ve\to 0$. Indeed, using the expansion \equ{fou1}  and estimate \equ{72} we obtain that
$$
I \, =    \,  \sum_{k=-1,+1}  \int_{\pp B_{5R}}  ( \pp_r \psi_{k}(r)e^{ik\theta} \pp_{y_2}\Gamma_0 -  \psi_{k}(r)e^{ik\theta}\pp_r \pp_{y_2}\Gamma_0)\, d\sigma =
 \mathcal M  O(\ve^{\frac \beta 2} ) .
$$
Using assumption \equ{uuu}
we find that
\begin{align*}
& \int_{{B_{5R}}}  ( \Delta\psi    + f'(\Gamma_0 + a_*) \psi )  \pp_{y_2}(\Gamma_0 +  a_* )\, dy  \\
= & \int_{{B_{5R}}}  \pp_{y_2} ( \Delta (\Gamma_0 + a_*)   + f(\Gamma_0 + a_*)  )\, \psi \, dy  + I  \\
=&
O(\ve^{2+ \nu } ) \|U_0^{-\frac 12} \phi\|_{L^2(\R^2)} +  \mathcal M  O(\ve^{\frac \beta 2} )  .
\end{align*}
We can argue in exactly the same way to estimate $c_2(t)$.
As a conclusion, we find that
$$
\gamma_\ell c_\ell (t)\ =\  \int_{\R^2 } E(\cdot ,t ) \, {\bf Z}_\ell dy +     \mathcal M  O(\ve^{\beta'} )  , \quad \ell=1,2
$$
for some uniform number $\beta'>0$.

\medskip
Combining the above estimates, we obtain the following result.

\begin{prop}\label{prop71}
 There exists   a number $\beta'>0$ such that for all sufficiently small $\ve>0$ and any functions $E(y,t)$ and $\phi(y,t)$ that satisfy  $\equ{orto4}$, $\equ{innerL2}$ and  $\equ{EEnu}$, we have that the numbers $c_\ell(t)$ define linear functionals of $E$ which satisfy the estimate
$$
\gamma _\ell c_\ell(t) =  \int_{\R^2}  E(\cdot ,t){\bf Z_\ell}\, dy \, +\, O( \ve^{\beta'})  \|E\|_{3+\beta}   .
$$
Besides $\phi$ and $\psi$ satisfy the estimates $\equ{pass1}$, $\equ{pass2}$, $\equ{pass3}$.
\end{prop}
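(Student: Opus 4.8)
The plan is to combine the transport a priori bounds already proved with explicit integration‑by‑parts formulas for the multipliers $c_\ell(t)$, and then close a small self‑referential loop that hinges on the $|\log\ve|$ gap built into $R=(\ve|\log\ve|)^{-1}$. \emph{First} I would reduce to a pure transport equation: moving the term $\nabla_y^\perp(\Gamma_0+a_*+a)\cdot\nabla_y(f'(\Gamma_0+a_*)\psi)$ and the term $\sum_\ell c_\ell\mathcal Z_{1\ell}$ of \equ{innerL2} to the right‑hand side, the function $\phi$ solves a transport equation exactly of the type handled in the proof of Lemma~\ref{lin1}, with forcing $E$ modified by those two quantities. Since the functions $\mathcal Z_{1\ell}$, $\ell=0,\dots,3$, all decay at least like $(1+|y|)^{-4}$ (and $\beta<1$), one has $\|\mathcal Z_{1\ell}\|_{3+\beta}<\infty$ and $\|U_0^{-1/2}\mathcal Z_{1\ell}\|_{L^2(\R^2)}<\infty$; moreover $\|\phi\|_{3+\beta}<\infty$ from \equ{EEnu} gives $\|U_0^{-1/2}\phi(\cdot,t)\|_{L^2}<\infty$. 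Applying Lemma~\ref{lin}, Lemma~\ref{lin1} and Corollary~\ref{co71} as in their proofs then yields, with $\mathcal M:=\|E\|_{3+\beta}+\sum_{\ell=0}^3\|c_\ell\|_{L^\infty(0,T)}$,
\[
\sup_{t\in[0,T]}\|U_0^{-\frac12}\phi(\cdot,t)\|_{L^2(\R^2)}\ \le\ C\,\ve^{-2}\,|\log\ve|^{\frac12}\,\mathcal M ,
\]
together with the bounds \equ{pass2} and \equ{pass3} but with $\|E\|_{3+\beta}$ replaced by $\mathcal M$; in particular $\psi$ admits the Fourier expansion \equ{72}.

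\emph{Second}, I would produce a workable formula for each $c_\ell(t)$ that does not see any derivative of $\phi$. Testing \equ{innerL2} against ${\bf Z}_\ell$, $\ell=0,1,2,3$, gives the identities \equ{cll}--\equ{c33}; then I integrate by parts the term $\int_{\R^2}\nabla_y^\perp(\Gamma_0+a_*+a)\cdot\nabla_y(\phi-f'(\Gamma_0+a_*)\psi)\,{\bf Z}_\ell$. For $\ell=0$ this term vanishes since $\nabla_y{\bf Z}_0=0$. For $\ell=1,2$, substituting $\phi=-\Delta\psi$ turns the bulk into $\int_{B_{5R}}\partial_{y_\ell}\bigl(\Delta(\Gamma_0+a_*)+f(\Gamma_0+a_*)\bigr)\psi\,dy$ plus boundary terms on $\partial B_{5R}$; the tuning of $a_*$, i.e.\ the smallness \equ{uuu} of ${\bf B}=\Delta(\Gamma_0+a_*)+f(\Gamma_0+a_*)$, makes the bulk $O(\ve^{2+\nu})\|U_0^{-1/2}\phi\|_{L^2}$, while the boundary terms are controlled through the mode $\pm1$ expansion \equ{fou1}--\equ{72} of $\psi$ and contribute $\mathcal M\,O(\ve^{\beta/2})$. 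For $\ell=3$ the decisive point is the choice $b_3=G'(\Gamma_0)a_*$, which forces $\nabla_y{\bf Z}_3$ to agree with $\nabla_yG(\Gamma_0+a_*)$ up to $O(\ve^4|y|)$, so that $\nabla_y{\bf Z}_3\cdot\nabla_y^\perp(\Gamma_0+a_*+a)=O(\ve^4)+O(\ve^{2+\nu}|y|^{-3})$ and the corresponding term is absorbed as well. Altogether this yields, for some $\beta_0>0$,
\[
\gamma_\ell c_\ell(t)=\int_{\R^2}E(\cdot,t)\,{\bf Z}_\ell\,dy+O(\ve^{2+\nu})\,\|U_0^{-\frac12}\phi(\cdot,t)\|_{L^2}+O(\ve^{\beta_0})\,\mathcal M .
\]

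\emph{Third}, I would close the loop. Inserting the first‑step bound $\|U_0^{-1/2}\phi\|_{L^2}\le C\ve^{-2}|\log\ve|^{1/2}\mathcal M$ into the previous identity makes every remainder of size $o(1)\,\mathcal M$, so, taking the supremum over $\ell$ and $t$, one gets $\sum_\ell\|c_\ell\|_{L^\infty}\le C\|E\|_{3+\beta}+o(1)\sum_\ell\|c_\ell\|_{L^\infty}$; for $\ve$ small the last term is absorbed, whence $\mathcal M\le C\|E\|_{3+\beta}$ and therefore $\|U_0^{-1/2}\phi\|_{L^2}\le C\ve^{-2}|\log\ve|^{1/2}\|E\|_{3+\beta}$, which is \equ{pass1}. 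Feeding this back once more into the $c_\ell$ identities turns the error into $O(\ve^{\beta'})\|E\|_{3+\beta}$ with $\beta':=\tfrac12\min\{\nu,\beta_0\}>0$ (the harmless $|\log\ve|^{1/2}$ being soaked into the exponent), which is the asserted estimate; linearity of $E\mapsto c_\ell$ is immediate from \equ{cll}--\equ{c33} and the linearity of \equ{innerL2}. Finally \equ{pass2} and \equ{pass3} follow from the first‑step estimates now that $\mathcal M\le C\|E\|_{3+\beta}$.

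The step I expect to be the main obstacle is the second one. Because $\psi$ decays only like $|y|^{-1}$, the boundary contributions at $|y|=5R$ are genuinely borderline and become small only through the sharp mode‑by‑mode description \equ{72}, which itself rests on the orthogonality conditions \equ{orto4} and on the finely tuned corrections $a_*$ and $b_3$ (the mismatch between the test functions ${\bf Z}_\ell$ and the true approximate kernels $\mathcal Z_{2\ell}$ of the linearized Liouville operator is precisely what $b_3$ repairs). One must also track carefully the circular dependence between the $L^2$‑bound on $\phi$ and the bounds on the $c_\ell$, which closes only because of the $|\log\ve|$ loss encoded in the choice $R=(\ve|\log\ve|)^{-1}$.
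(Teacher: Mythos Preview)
Your proposal is correct and follows essentially the same route as the paper: apply the a priori bounds of Lemma~\ref{lin}, Lemma~\ref{lin1} and Corollary~\ref{co71} with the right-hand side augmented by $\sum_\ell c_\ell\mathcal Z_{1\ell}$ (giving estimates in terms of $\mathcal M=\|E\|_{3+\beta}+\sum_\ell\|c_\ell\|_{L^\infty}$), then integrate by parts in \equ{cll}--\equ{c33} using the choice $b_3=G'(\Gamma_0)a_*$ for $\ell=3$, the smallness \equ{uuu} of ${\bf B}$ and the mode-$\pm1$ expansion \equ{72} for the $\partial B_{5R}$ boundary terms when $\ell=1,2$, and finally absorb $o(1)\mathcal M$ into the left to close. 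Your identification of the borderline boundary contribution as the delicate point matches the paper's emphasis exactly.
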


\subsection{Some a priori estimates} \label{apriories}
We want to apply this proposition to  obtain a priori estimates to
  System \equ{bpsidef}, \equ{equinner}, \equ{equc}, \equ{equout1}, \equ{equout2}. Specifically, we want to deal with equations \equ{equinner}.
 Let us write it in the form
\be\label{rrrr}
\begin{aligned}
 \ve^2 \pp_t \bphi_j    &+  \kappa_j \nn_y^\perp \big [ \Gamma_0    +   \la {a}_{j}( \bpsi_j , \alpha_j, \psi^{out},\ttt\xi )  \big ]   \cdot
 \nn_y \bphi_j \, +\, \kappa_j  \nabla^\perp_y \bpsi_j   \cdot \nabla_y (U_0 +\la \eta_4\phi_j^*)
 \\
 &+\,  \Theta_{j,\la}  ( \bpsi_j , \alpha_j, \psi^{out};\ttt \xi)  \, =\,
 \sum_{l=0}^3 c_{lj}(t)\mathcal Z_{1l}(y)
\inn \R^2\times [0,T].
\end{aligned}\ee
where
$$
\begin{aligned}
 \Theta_{j,\la}  ( \bpsi_j , \alpha_j, \psi^{out};\ttt \xi)\ := &\  \la\,\big[ \, \ttt \EE_{j}( \bpsi_j , \alpha_j,  \psi^{out},\ttt \xi ) + \ve \eta_4\, \nabla^\perp_x \psi^{out} \cdot \nn U_0  \, \big ] \\
 &  +\,  \ve  [ -\dot {\ttt \xi}_j +  (D_{\xi_j}\nn^\perp_{\xi} K)(\xi_0+ \xi_1) [\ttt\xi] ] \cdot \nn U_0  \\ & +\,  \ve^2\sum_{l=0,3}  \dot \alpha_{lj}  \mathcal Z_{1l}  .
\end{aligned}
$$
The main observation is that the linear operator in $\bphi_j$ given by the first row of formula $\equ{rrrr}$ can  essentially be written as one of the form involved in equation \equ{innerL2}, provided that the functions $\psi_j ,\psi^{out}, \alpha_j$ are of sufficiently small order.
After scaling out $\kappa_j$ to assume it equal to $1$ we write
$$
a_* + a  := \la {a}_{j}( \bpsi_j , \alpha_j, \psi^{out},\ttt\xi )
$$
where
\be
a(y,t) \   :=  \     \la\,\eta_4\, \big ( \bpsi_j + \sum_{l=0,3} \alpha_{lj}\mathcal Z_{2l} +  \psi^{out}  + \ve\dot{\ttt\xi} \cdot y\, \big ) .
\label{ccc}\ee
We will consider parameter functions lying on a region where $\nn a = O(\ve^{2+\nu})$ for some $\nu>0$.  Recalling the definition of $a_j$
in \equ{defaj} we find  that the functions  $a(y,t)$, $a^*(y,t)$ satisfy the structure assumptions \equ{nu0}-\equ{nu}.
Moreover, carefully checking the terms involved in  \equ{defaj}, we obtain that
$$
a_*  =  \la  (\eta_4   \psi_j^*  +     \mathcal H)
$$
where \begin{align*}
&\mathcal H = \mathcal H_0  + \mathcal H_1, \quad
\psi_j^* =  \psi^*_{j0}  +  \psi^*_{j1} ,\\
&\mathcal H_0  =  O(\ve^2|y|^2), \quad    \mathcal H_1= O(\ve^4|y|^2) + O (\ve^3|\log\ve|\, |y|^{-1}),  \\
&\Delta \psi^*_{j0} +  f'(\Gamma_0)\psi^*_{j0}   +  f'(\Gamma_0)\mathcal H_0   = 0, \\
&\Delta  \mathcal H_0 = 0, \quad \Delta  \mathcal H_1 =  O(\ve^4) + O (\ve^3|\log\ve|\, |y|^{-3}).
 \end{align*}
 while, also,
\begin{align*}
\phi_j^* =   -\Delta \psi^*_{j0}  +  O(\ve^4) + O (\ve^3|\log\ve|\, |y|^{-3}) .
 \end{align*}
From the above relations, the following facts are readily obtained
$$
  U_0 +   \la \eta_4  \phi_j^* = f(\Gamma_0 + a_*)  =    +  \la\eta_4 [ O(\ve^4) + O (\ve^3|\log\ve|\, |y|^{-3})  \,].
$$
and for
$$
{\bf B} (y,t) = \Delta (\Gamma_0 + a_*) +   f(\Gamma_0 + a_*)
$$
we have
$$
|{\bf B} (y,t)|\, +\,(1+ |y|) |\nn_y {\bf B} (y,t)|\, \le \,   C\,\la [\, \ve^4 +  \ve^3|\log\ve|\, |y|^{-3} \,],
$$
We can therefore write in this setting  Equation \equ{rrrr} in the form
$$\left \{
\begin{aligned}
\varepsilon^2 \phi_t +  \nabla_y^\perp( \Gamma_0 + a_* +a)\cdot  \nabla_y ( \phi & - f'( \Gamma_0 +a_*) \psi )\,\\ &
+ \,  \Theta_{j,\la}  +  \la[ A\cdot \nn \psi +  B  \psi ]
\\ &  = \, \sum_{l=0}^3 c_{l}(t) \mathcal  Z_{1l}(y)  \inn \R^2 \times (0,T)\\
&\qquad\quad\phi(\cdot,0)\,  =\, 0 \inn \R^2
\end{aligned}\right.
$$
where $$ |A| + (1+|y|) |B| \le     C\big [\, \ve^4 +  \ve^3|\log\ve|\, |y|^{-3} \big ]  . $$
and a priori bounds of the form
\be \label{roger1}
\gamma _\ell c_{jl}(t) =  \int_{\R^2}   \Theta_{j,\la} (\cdot ,t){\bf Z_\ell}\, dy \, +\, O( \ve^{\beta'})  \| \Theta_{j,\la} \|_{3+\beta}   .
\ee
for some $\beta'>0$ readily follow, of course provided that $a(y,t)$ satisfies the required smallness assumptions.

\medskip
Moreover, we make the following observation which is useful to obtain an improvement of the a priori estimate for $\ttt\phi_j$.
We can rewrite equation \equ{rrrr}
in the form

\be\label{rrrr1}\left \{
\begin{aligned}
& \ve^2 \pp_t \bphi_j    +  \kappa_j \nn_y^\perp \big [ \Gamma_0    +   \la {a}_{j}( \bpsi_j , \alpha_j, \psi^{out},\ttt\xi )  \big ]   \cdot
 \nn_y \bphi_j \\
 &\, +\, \kappa_j  \nabla^\perp_y \bpsi_j   \cdot \nabla_y (U_0 +\la \eta_4\phi_j^*)+\,  \la \ttt \Theta_{j}  ( \bpsi_j , \alpha_j, \psi^{out};\ttt \xi)\\ &  \, =\,
 \sum_{l=0}^3 \ttt c_{lj}(t)\mathcal Z_{1l}(y)
\inn \R^2\times [0,T], \\
&\qquad \ttt \phi_j(y, 0) = 0 \inn \R^2.
\end{aligned} \right .\ee
for certain numbers $\ttt c_{lj}(t)$ with $\ttt \Theta_{j,\la}$  consist of some pieces of $\Theta_{j,\la}$ taken away. More precisely,

\be\label{tetatilde}
\begin{aligned}
&\ttt \Theta_{j}  ( \bpsi_j , \alpha_j, \psi^{out};\ttt \xi)(y,t)\ := \   \ttt \EE_{j}( \bpsi_j , \alpha_j,  \psi^{out},\ttt \xi )(y,t) \\   &+\,  \ve \eta_4\, [\nabla^\perp_x \psi^{out}(\xi_j(t) +\ve y,t) - \nabla^\perp_x \psi^{out}(\xi_j(t),t)]  \cdot \nn U_0 (y) \, .
\end{aligned}
\ee
where $\ttt \EE_{j}$ is defined in \equ{defaj}.
Applying Lemma \ref{lin} and  Corollary \ref{co71} we find that
if $a(y,t)$ has sufficient smallness then $\ttt\phi_j(y,t)$ solving \equ{rrrr1}
satisfies the bounds

\be \label{pass11}
\sup_{t\in [0,T]} \|U_0^{-\frac 12 } \phi(\cdot , t)\|_{L^2(\R^2)} \ \le \ C\,\ve^{-2} \, |\log \ve |^{1\over 2} \, \|\ttt \Theta_{j}\|_{3+\beta}.
\ee
and
\be \label{pass31}
|\phi(y,t)| \ \le \  C\, \Big[  \frac {\ve^{-2}}  {1+|y|^{3+\beta}}  +  \frac {\ve^{-4-\sigma}}  {1+|y|^{7}}   \Big ]\,  \|\ttt \Theta_{j}\|_{3+\beta}.
\ee

\medskip

\section{Reformulation and a priori bounds}

We consider in this section System \equ{bpsidef}, \equ{equinner}, \equ{equc}, \equ{equout1}, \equ{equout2}, which we will
set up as a fixed problem of the form  \equ{equp} in an appropriate Banach space  $X$.

\medskip
\subsection{The space $X$.}
We begin by defining an appropriate norm for the functions
$\bphi_j(y,t)$
in agreement with the estimates found in the previous section.

\medskip
Let us fix a  small number  $0< \beta<1$.
For and arbitrary functions $\phi(y,t)$  we define the inner norm
\begin{align*}
 \| \phi\|_{i}\, :=  &\,  \sup_{t\in [0,T]} \| \phi(\cdot,t) U_0^{-\frac 12} \|_{L^2(\R^2)}\\
 &+   \sup_{(y,t)\in \R^2\times [0,T]} |\, (1+|y|)^{3+\beta}\min \{ 1, \ve^{2+ \frac \beta 4 } (1+|y|)^{4-\beta} \} \phi(y,t)   \, |
\end{align*}
For the outer functions $\ttt\psi^{out}$, $\ttt\phi^{out}$ we consider the following norms
for functions $\phi(x,t)$, $\psi(x,t)$ defined in $\Omega\times [0,T]$.
\begin{align*}
 \| \phi\|_{o1}\, := &\,   \| \phi \|_ {L^\infty (\Omega\times [0,T])} ,  \\
\|\psi \|_{o2} \, :=&\,  \| \psi \|_ {L^\infty (\Omega\times [0,T])} + \|\nn_x \psi \|_ {L^\infty (\Omega\times [0,T])}
\, .\end{align*}
We consider the space $X$ of all continuous functions
$\vec { p}  = (\bphi, \alpha, \phi^{out},\psi^{out} , \ttt\xi) $
such that
$\nn_y\bpsi(y,t),\, \nn_x \psi^{out}(x,t),\, \dot \alpha(t), \, \dot {\ttt\xi}(t),  $ exist and are continuous and such that
$$
\|\vec p \,\| _X \,:=\,     \| \phi^{out}\|_{o1}+  \|\psi^{out}\|_{o2} + \sum_{j=1}^N \big( \| \bphi_j\|_{i} + \|\alpha_j\|_{C^1[0,T]} + \|\ttt\xi_j \|_{C^1[0,T]}\big)    \, <\, +\infty .
$$
We define the set $\OO$ as a ``deformed ball'' centered at $\vec p = \vec 0$. We fix an arbitrarily small number $\sigma>0$ and let $\OO $ be the set of all functions
$\vec { p}  = (\bphi, \alpha, \ttt\xi, \phi^{out},\psi^{out})\in X $ such that
\be\label{OO} \left\{ \begin{aligned}
& \sum_{j=1}^N  \| \bphi_j\|_{i}
\ < \ \ve^{3- 3\beta} ,\\
&\sum_{j=1}^N \|\alpha_j\|_{C^1[0,T]}  \, < \, \ve^{3- 3\beta} ,\quad  \sum_{j=1}^N \|\ttt\xi_j\|_{C^1[0,T]}  \, < \, \ve^{4-3\beta}  ,\\
&\|\phi^{out}\|_{o1}  \ < \ \ve^{4- 3\beta} ,\quad   \|\psi^{out}\|_{o2}  \ < \ \ve^{4- 3\beta }.
\end{aligned}
\right.
\ee

\subsection{Fixed point formulation}
Let us express System  \equ{equinner}, \equ{equc}, \equ{equout1}, \equ{equout2} in the fixed point form \equ{equp}
for a suitable operator $ {\mathcal F} (\cdot , \la )$, in a region of
the form $\equ{OO}$.

\medskip
We start with \equ{equinner}.
For a given function $a(y,t)$ with $\Delta_y a \in L^\infty (\R^2\times [0,T])$  let us consider the transport operator
$$
\TT_j (a)[\phi] :=  \ve^2\phi_t +  \kappa_j \nn_y^\perp ( \Gamma_0 + a) \cdot \nn_y \phi ,
$$
and for a bounded function $E(y,t)$ the linear equation
\begin{align*}
\TT_j (a)[\phi] + E &= 0 \inn \R^2\times [0,T]\\
\phi(\cdot,0) &= 0 \inn \R^2  .
\end{align*}
We call
$
\phi =  \mathcal \TT^{-1}_j (a) [ E] $
the unique solution of this problem, through the representation formula \equ{phi}, which defines a linear operator of $E$.
Let us write the operator ${\EE }_{j,\la}$ in \equ{Ejla} in the form
 \begin{align*}
& {\EE }_{j,\la}  (  \bpsi_j  , \alpha_j,  \psi^{out};\ttt \xi)\\  = & \TT_j ( \,\la {a}_{j}( \bpsi_j , \alpha_j, \psi^{out},\ttt\xi \,) )[\ttt \phi_j]  +   \kappa_j  \nabla^\perp_y \bpsi_j   \cdot \nabla_y (U_0 +\la \eta_4\phi_j^*)+
\la\ttt\Theta_{j}  ( \bpsi_j , \alpha_j, \psi^{out};\ttt \xi).
\end{align*}
where, as usual, $\bpsi_j = \mathcal N(\bphi_j)$, and $\ttt\Theta_{j}$ is given by \equ{tetatilde}.
We reformulate equations \equ{equinner} as
\be\label{fixed1} \boxed{
\begin{aligned}
\bphi =
\mathcal F_{\la}^{in} (\bphi , \alpha, \psi^{out},\ttt\xi ),\\
\end{aligned} }
\ee
where
\begin{align}
&(\mathcal F_{\la}^{in})_j (\bphi , \alpha,   \psi^{out},\ttt\xi )\  : =  \label{Fin} \\ \  \mathcal \TT^{-1}_j\big (
\,\la {a}_{j}( \bpsi_j , \alpha_j, \psi^{out},\ttt\xi \,) \,\big )\, \big [  & \kappa_j  \nabla^\perp_y \bpsi_j   \cdot \nabla_y (U_0 +\la \eta_4\phi_j^*) + \la\ttt\Theta_{j}  ( \bpsi_j , \alpha_j,  \psi^{out};\ttt \xi) - \sum_{l=0}^3 c_{lj} \mathcal Z_{1l}    \big ].\nonumber
\end{align}


We reformulate the outer equations \equ{equout1}-\equ{equout2} in  a similar way.
For a given function $e(x,t)$ with $\Delta_x e \in L^\infty (\Omega\times [0,T])$  and $e= 0$ on $\pp\Omega\times [0,T]$
let us consider the transport operator
$$
\TT (e)[\phi] :=  \phi_t +   \nn_y^\perp (\Psi_* + e)  \cdot \nn_x \phi ,
$$
and for a bounded function $E(x,t)$, the linear equation
\begin{align*}
\TT (e)[\phi] + E &= 0 \inn \Omega\times [0,T]\\
\phi(\cdot,0) &= 0 \inn \Omega  .
\end{align*}
We call
$
\phi =  \TT^{-1} (e) [ E] $
the unique solution of this problem, through the representation formula \equ{phi3}.
We write \equ{equout1}-\equ{equout2} in the form
\be\label{fixed2}  \begin{boxed}{
\begin{aligned}
\phi^{out} = & \mathcal F_{1\la}^{out}  (\bphi ,  \alpha, \psi^{out},\ttt\xi )\\
\psi^{out} = & \mathcal F_{2\la}^{out}  ( \bphi , \alpha, \phi^{out},\ttt\xi )
\end{aligned}}\end{boxed}
\ee
where
$$
\mathcal F_{1\la}^{out}  (\bphi,  \alpha, \psi^{out},\ttt\xi ) :=
 \TT^{-1}(\la\sum_{j=1}^N \eta_{2j}  \psi_j + \la  \psi^{out} ) \left[ \la \ttt \EE_{1}^{out}( \ttt \phi ,  \alpha,     \psi^{out};\ttt \xi) \right]
$$
with $ \ttt \EE_{1}^{out}$ given by \equ{EE1}
and
$$
\mathcal F_{2\la}^{out}  ( \bphi , \alpha, \phi^{out},\ttt\xi ):= (-\Delta )^{-1} \left[ \la\,  \phi^{out}
      +  \la\, \sum_{j=1}^N \kappa_j ( \psi_j \Delta_x\eta_{2j}  + 2 \nn_x \eta_{2j} \cdot \nn_x   \psi_j) + \la\,E_{2*}^{out}(\ttt\xi) \right]
$$
where $\psi = (-\Delta)^{-1} h$ is the unique solution of
$$-\Delta \psi = h \inn \Omega , \quad \psi = 0 \inn \partial \Omega.
$$
and $E_{2*}^{out}(\ttt\xi)$ is given by \equ{E2*}.

Equations \equ{equc} can be better described using expressions \equ{roger1}.
Indeed we have that for $a$ as in \equ{ccc} the bound $\nn a =  O(\ve^{2+\nu})$ holds.
Then we find that the equations $c_{\ell j}=0$ read
\be\label{pico71} \left \{
\begin{aligned}
\dot {\ttt \xi}_j  & =  (D_{\xi_j}\nn^\perp_{\xi} K)(\xi_0+ \xi_1) [\ttt\xi]  + \la \ve^{-1}(\mathcal G_{1} )_j( \bphi , \alpha, \psi^{out};\ttt \xi),\\
\dot {\alpha}_j    &= \la \ve^{-2}\mathcal (G_{0})_j
 ( \bphi , \alpha, \psi^{out};\ttt \xi),  \quad j=1,\ldots, N, \\
\xi(0) &=0,  \quad  \alpha(0)=0 .
\end{aligned}\right.
\ee
As usual we write $$\alpha_j = (\alpha_{0j}, \alpha_{3j}),\quad  \xi_j= (\xi_{j1},\xi_{j2}),
\quad
\alpha = (\alpha_1,\ldots,\alpha_N), \quad \xi = (\xi_1,\ldots,\xi_N)$$
and denote
$$(\mathcal G_0)_j = (\mathcal G_{0j}, \mathcal G_{3j}),\quad  (\mathcal G_1)_j = (\mathcal G_{1j}, \mathcal G_{2j}),
$$
where
$$
\begin{aligned}
 \mathcal G _{j\ell}  ( \bphi , \alpha, \psi^{out},\ttt \xi)(t)\ := &\  \gamma_\ell \int_{\R^2} \big[ \, \ttt \EE_{j}( \bpsi_j , \alpha_j,  \psi^{out},\ttt \xi ) + \ve \eta_4\, \nabla^\perp_x \psi^{out} \cdot \nn U_0  \, \big ] {\bf Z}_\ell\, dy. \\
\end{aligned}
$$
Equations \equ{pico71} can be written in fixed point form as
\be\label{sisalfa} \boxed{
\begin{aligned}
\ttt \xi (t)  &=  \mathcal F_{1\la} ( \bphi , \alpha,  \psi^{out},\ttt \xi ) =  \int_0^t \big(  B(s)[\ttt\xi]    + \la \mathcal G_{1} ( \bphi , \alpha, \psi^{out},\ttt \xi)\, \big)\, ds \\
\alpha(t)& = \mathcal F_{0\la} ( \bphi , \alpha, \psi^{out},\ttt \xi)=  \int_0^t  \la \mathcal G_{0}
 ( \bphi , \alpha, \psi^{out},\ttt \xi,\la) \, ds
\end{aligned}
}\ee
where
$$
\big( B (t) [\ttt \xi] \big)_j =   (D_{\xi_j}\nn^\perp_{\xi} K)(\xi_0+ \xi_1) [\ttt\xi],
$$
$$
\mathcal G_0 = ((\mathcal G_0)_1,\ldots, (\mathcal G_0)_N), \quad \mathcal G_1 = ((\mathcal G_1)_1,\ldots, (\mathcal G_1)_N).$$

\medskip
Summarizing,
our problem is equivalent to
system  \equ{fixed1}-\equ{fixed2}-\equ{sisalfa},
which we will next replace by an equivalent fixed point equation in $\OO$
for which the right hand side is a
compact operator for the topology in $X$.

Let us consider the equations
$$ \boxed{
\begin{aligned}
\bphi = &
\mathcal F_{\la}^{in} (\bphi , \alpha, \psi^{out},\ttt\xi ),\\
\phi^{out} = & \tilde {\mathcal F}_{1\la}^{out}  (\bphi , \alpha, \psi^{out},\ttt\xi )  \\
\psi^{out} = & \tilde {\mathcal F}_{2\la}^{out}    (\bphi , \alpha, \psi^{out},\ttt\xi ) \\
\ttt \xi   = & \tilde {\mathcal F} _{1\la} (\bphi , \alpha, \psi^{out},\ttt\xi )  \\
\alpha = & \tilde {\mathcal F} _{0\la} (\bphi , \alpha, \psi^{out},\ttt\xi )
\end{aligned} }
$$
where
\begin{align*}
\tilde {\mathcal F}_{1\la}^{out}  (\bphi , \alpha, \psi^{out},\ttt\xi ) &:= \mathcal F_{1\la}^{out}   (\mathcal F_{\la}^{in}  (\bphi , \alpha, \psi^{out},\ttt\xi )  ,  \alpha, \psi^{out},\ttt\xi )\\
\tilde {\mathcal F}_{2\la}^{out}  (\bphi , \alpha, \psi^{out},\ttt\xi )&:=
\mathcal F_{2\la}^{out}    (\mathcal F_{\la}^{in} (\bphi , \alpha, \psi^{out},\ttt\xi ) , \alpha , \tilde {\mathcal F}_{1\la}^{out}  (\bphi , \alpha, \psi^{out},\ttt\xi ) , \ttt\xi ) \\
\tilde {\mathcal F} _{1\la} (\bphi , \alpha, \psi^{out},\ttt\xi )&:=
\mathcal F _{1\la} (\mathcal F_{\la}^{in} (\bphi , \alpha, \psi^{out},\ttt\xi ) , \alpha ,  \tilde {\mathcal F}_{2\la}^{out}    (\bphi , \alpha, \psi^{out},\ttt\xi )   , \ttt\xi )\\
\tilde {\mathcal F} _{0\la} (\bphi , \alpha, \psi^{out},\ttt\xi )&:=
\mathcal F _{0\la} (\mathcal F_{\la}^{in} (\bphi , \alpha, \psi^{out},\ttt\xi ) , \alpha ,  \tilde {\mathcal F}_{2\la}^{out}    (\bphi , \alpha, \psi^{out},\ttt\xi )   , \ttt\xi ).
\end{align*}
With a slight abuse of notation, we see that System
 \equ{fixed1}-\equ{fixed2}-\equ{sisalfa} in $\bar \OO$ is equivalent
to the fixed point problem
\be\label{ecc}
\vec p \ =\ \tilde {\mathcal F_\la} (\vec p) , \quad \vec p\in\bar\OO
\ee
where
\be \label{opera}
\left\{
\begin{aligned}
\tilde {\mathcal F_\la} (\vec p) \, := &\,   ( {\mathcal F}^{in}_\la(\vec p), \tilde {\mathcal F}_{0\la}(\vec p), \tilde {\mathcal F}_{1\la}(\vec p) , \tilde {\mathcal F}^{out}_{1\la}(\vec p), \tilde {\mathcal F}^{out}_{2\la}(\vec p)
), \\
\vec p\, = &\, (\bphi,\alpha, \ttt\xi, \phi^{out}, \psi^{out}) .
\end{aligned}  \right.
\ee

\begin{lemma}
The operator $\tilde {\mathcal F} : \OO\times [0,1] \to X $ given by
$\tilde {\mathcal F} ( \cdot , \la )=   \tilde {\mathcal F}_\la$  in $\equ{opera}$
is compact.
\end{lemma}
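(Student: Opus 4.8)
The plan is to show that $\tilde{\mathcal F}$ is continuous and that it sends the bounded set $\bar\OO\times[0,1]$ into a relatively compact subset of $X$. Since $\tilde{\mathcal F}_\la$ is built by composing $\mathcal F^{in}_\la$, $\mathcal F^{out}_{1\la}$, $\mathcal F^{out}_{2\la}$, $\mathcal F_{0\la}$, $\mathcal F_{1\la}$ as in \equ{opera}, and $X$ is a finite product of Banach spaces, it suffices to check that each component of $\tilde{\mathcal F}_\la(\vec p)$ ranges, as $\vec p$ varies over $\bar\OO$ and $\la$ over $[0,1]$, over a set that is relatively compact for the corresponding factor norm; compactness of the whole operator then follows because a composition of a compact map with continuous ones is compact. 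Continuity of each building block in all of its arguments, $\la$ included, will follow routinely from the explicit representation formulas \equ{phi}, \equ{phi3} and the Newtonian potential formula \equ{bpsidef} together with dominated convergence, so the bulk of the argument is the compactness of the ranges, which in every case reduces to a uniform bound plus an equicontinuity estimate followed by Arzel\`a--Ascoli.

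First I would treat the inner component $\bphi=\mathcal F^{in}_\la(\vec p)$. Applying the a priori machinery of \S\ref{apriories} --- Lemma~\ref{lin}, Corollary~\ref{co71}, Proposition~\ref{prop71} and the refined bounds \equ{pass11}, \equ{pass31} --- to the source appearing in \equ{Fin}, one obtains for $\vec p\in\bar\OO$ a uniform bound on $\|\bphi_j\|_i$ together with uniform pointwise decay of $|\bphi_j(y,t)|$ for $|y|$ large, which makes the tails of the weighted sup- and $L^2$-norms entering $\|\cdot\|_i$ uniformly small. For equicontinuity in $(y,t)$ I would use \equ{phi}: $\bphi_j$ is an integral along the characteristics of a source which, by inspection of \equ{Fin}, is continuous in $(y,t)$ with a modulus independent of $\vec p$ and $\la$ --- the terms $\bpsi_j$ and $\nn_y\bpsi_j$ gain H\"older regularity through the Newtonian potential by \equ{gradtotal}, \equ{pass2}, and all remaining terms of $\ttt\Theta_{j}$ are products of the given (equibounded) functions with smooth cut-offs and with $U_0$. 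Since $\ve$ is fixed, Lemma~\ref{modc2} then furnishes a uniform modulus of continuity for $\bphi_j$ on compact subsets of $(y,t)$; combining this with the uniform decay of the tails and a diagonal Arzel\`a--Ascoli argument yields precompactness in $\|\cdot\|_i$.

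Next I would handle the outer components. For $\psi^{out}=\tilde{\mathcal F}^{out}_{2\la}(\vec p)$, the Poisson problem \equ{equout2} has right-hand side bounded in $L^\infty(\Omega\times[0,T])$ uniformly in $\vec p$ (using \equ{pass31}, \equ{derBarPsij}, \equ{E2*}), so elliptic regularity gives a uniform $C^{1,\alpha}(\bar\Omega)$ bound for $\psi^{out}(\cdot,t)$ and continuity in $t$; the compact embedding $C^{1,\alpha}(\bar\Omega)\hookrightarrow C^1(\bar\Omega)$ plus Arzel\`a--Ascoli in $t$ gives precompactness in $\|\cdot\|_{o2}$. For $\phi^{out}=\tilde{\mathcal F}^{out}_{1\la}(\vec p)$ I would argue as in the inner case, now via \equ{phi3} and Lemma~\ref{modc1}: the source $\la\ttt\EE^{out}_{1}$ is uniformly bounded and uniformly continuous on $\Omega\times[0,T]$, so its characteristics have a uniform modulus of continuity and $\phi^{out}$ is bounded and equicontinuous on $\bar\Omega\times[0,T]$, hence precompact in $\|\cdot\|_{o1}$. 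Finally, $\ttt\xi$ and $\alpha$ are the time integrals \equ{sisalfa} of functions that are uniformly bounded and uniformly (in $t$) continuous --- the integrand $B(s)[\ttt\xi]$ is continuous since $\ttt\xi$ is, and $\mathcal G_{0},\mathcal G_{1}$ are integrals over $\R^2$ of continuous-in-$t$, uniformly decaying integrands --- so the images lie in $C^1[0,T]$ with equibounded and equicontinuous derivatives, and Arzel\`a--Ascoli on $C^1[0,T]$ closes these two factors.

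The hard part will be the inner equicontinuity step: the characteristics governing \equ{phi} are only log-Lipschitz, so the modulus of continuity supplied by Lemma~\ref{modc2} degenerates as $\ve\to 0$; this is admissible here only because $\ve>0$ is held fixed throughout. I also expect that some care is needed to verify that the smoothing produced at each stage --- elliptic regularity gain for $\psi^{out}$, the H\"older gain of the Newtonian potential $\bpsi_j$, and the extra $t$-derivative coming from the integral operators defining $\ttt\xi$ and $\alpha$ --- genuinely survives the compositions in \equ{opera}, so that no component of $\tilde{\mathcal F}_\la$ reduces to the identity on any factor of $X$; this is precisely what the structure of $\tilde{\mathcal F}_\la$ is arranged to guarantee, but it has to be checked term by term.
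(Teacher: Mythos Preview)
Your proposal follows essentially the same route as the paper's proof: equicontinuity of the inner component via Lemma~\ref{modc2}, of the outer transport via Lemma~\ref{modc1}, elliptic regularity plus compact embedding for $\psi^{out}$, and the time-integral structure for $(\ttt\xi,\alpha)$, all assembled by Arzel\`a--Ascoli.

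One imprecision is worth flagging. For the uniform bound on the inner output you invoke the a~priori machinery of \S\ref{apriories} (Lemma~\ref{lin}, Corollary~\ref{co71}, \equ{pass11}, \equ{pass31}), but those results concern the \emph{coupled} projected problem in which the $\psi$ in the equation is the Newtonian potential of the unknown $\phi$ itself and the orthogonality conditions \equ{orto4} are in force. In \equ{Fin}, by contrast, the $\bpsi_j$ in the source is the Newtonian potential of the \emph{input} $\bphi_j$, and the output of $\mathcal F^{in}_\la$ is obtained by a pure transport inversion $\mathcal T_j^{-1}$; there is no reason for the output to satisfy \equ{orto4} unless $\vec p$ is already a fixed point. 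The paper therefore does not use \S\ref{apriories} here at all: it simply checks that the source in \equ{Fin} obeys $\|\cdot\|_4\le C_\ve$ uniformly for $\vec p\in\bar\OO$ (using the H\"older gain on $\bpsi_j$ from \equ{gradtotal} and Remark~\ref{nota} for the terms involving $\nn_y\phi_j^*$), and then Lemma~\ref{transport1-new} with $p=\infty$ transfers this decay to the output. Since $0<\beta<1$, the extra margin $4>3+\beta$ is exactly what turns uniform convergence on compacta (from Lemma~\ref{modc2}) into convergence in the weighted sup-norm part of $\|\cdot\|_i$. Your phrase ``uniform decay of the tails'' is the right idea; just source it from Lemma~\ref{transport1-new} rather than from the coupled estimates.
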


\begin{proof}
We check that each of the five operators defining $\ttt {\mathcal F }_\la(\vec p)$ is compact in $\OO $ (uniformly in $\la$).
We start with $\mathcal F^{in}_\la(\vec p)$ in \equ{Fin}.
The key fact is that
the operator
$ g = \mathcal T^{-1}_j (a)[h] $ has the property in Lemma \ref{modc2}, which states that a uniform bound in $\Delta_y a $ and a control of the modulus of continuity in $y$ of $h(y,t)$ uniformly in $t$ yields a uniform control of the modulus of  continuity of $g$  in both variables $(y,t)$.
We see in \equ{Fin} that  for a certain $C_\ve>0$ we have
$$ \|\Delta_y a_j\|_ {L^\infty(\R^2\times [0,T])} \le  C_\ve  \foral \vec p\in \bar\OO. $$
and it vanishes outside a compact set. Moreover, we have a uniform H\"older
control in space variables on the corresponding arguments $h$ for $\vec p\in \bar\OO$ as it follows from the H\"older estimates for the gradients of $\ttt\psi_j$ and $\psi^{out}$  inherited from the uniform bounds holding for
$\ttt\psi$ and $\phi^{out}$ in the definition of $\OO$ (see the argument in the proof of \equ{pass2}). Also, the numbers
$c_{lj}(t)$ have a uniform bound, thanks to \equ{roger1}.
 Uniform Lipschitz bounds hold for the remaining errors, as it follows in particular from Remark \ref{nota} for the control of the terms involving
$\nn_y \phi_j^*$.  Lemma \ref{modc2} then implies that
$\ttt{\mathcal F}^{in}_\la(\bar\OO)$ is a set of continuous functions
 $g:\R^2\times [0,T]\to \R^N$ whose restrictions to any compact set defines a uniformly bounded, equicontinuous set. Hence, any sequence $\phi_n \in \ttt{\mathcal F}^{in}_\la(\OO)$
has a subsequence $\phi_{n'}$ which is uniformly convergent on each compact set.
 Finally, we observe that $\|\phi_n\|_{4}\le C_\ve$ since the argument of
 the transport operator has this property. This implies that $\phi_{n'}$ is actually convergent in the space of continuous functions with finite
 $\| \cdot \|_{3+\beta} $-norm, since $0<\beta <1$. Hence
 $\ttt{\mathcal F}^{in}_\la(\OO)$ is precompact in this space.
 The compactness of the operator $\tilde {\mathcal F}_{1\la}^{out}$ into $C(\bar\Omega\times [0,T])$ follows directly from Arzela-Ascoli's theorem, again from the corresponding control for the transport equation and the
 uniform controls on space and time variables valid for the operator $\tilde {\mathcal F}_{1\la}^{in}$. From here the compactness for $\tilde {\mathcal F}_{2\la}^{out}$ follows in similar manner. Finally, the compactness of the operators
 $
 \tilde {\mathcal F}_{0\la}(\vec p), \tilde {\mathcal F}_{1\la}(\vec p)
 $
into $C^1([0,T])$ follows again from the equicontinuity in $t$ inherited for the
different terms involved in their definition. The proof is concluded.
\end{proof}

\subsection{Conclusion of the proof of Theorem \ref{teo1}}
The original problem has been so far reduced  to finding a solution of the fixed point problem
\equ{ecc} for $\la= 1$. To do this, we will prove that for all $\la\in [0,1]$ equation \equ{ecc} has no solution $\vec p\in \pp\OO$, at least whenever $\ve$ is chosen sufficiently small.  Let us assume that $\vec p\in \bar\OO$ satisfies
\equ{ecc} for some $\la$. We claim that actually $\vec p\in \OO$.
We  use the considerations in \S \ref{apriories}.
Using bounds \equ{OO} and Lemma \ref{interpolacion} we find that the function $a(y,t)$ in \equ{ccc} satisfies $\nn_y a = O( \ve^{3-4\beta }) = O(\ve^{1+\nu}) $ provided that $\beta$ was chosen sufficiently small. Then estimates  \equ{pass11}, \equ{pass31} apply for the coordinate $\ttt \phi$ of $\vec p$.
Examining the function
\equ{tetatilde} we quickly see that if $\beta$ was chosen sufficiently small then
$$
\|\ttt \Theta_j( \ttt\psi , \alpha, \psi^{out}, \xi) \|_{3+\beta}  \le \ve^{5- \frac 32\beta}
$$
Estimates \equ{pass11}, \equ{pass31} then yield, by definition of the inner norm,
$$
\|\ttt \phi\|_i \le \ve^{3-2\beta} \ll \ve^{3-3\beta},
$$
the latter number being that involved in the definition of $\OO$ in \equ{OO}.
Let us consider the outer equations.
Examining expression \equ{EE1} that determines the size of $\phi^{out}$,
we see that its magnitude does not exceed the order $O(\ve^{4-\beta})$. Here we have used the remote size of $\ttt \phi$ implicit in the norm
$\|\ttt \phi_j\|_i$. Indeed using the size induced in $\ttt \psi$, we find
that
$$
\| \phi^{out}\|_{o1}  +   \| \psi^{out}\|_{o2} \le \ve^{4-2\beta} \ll \ve^{4-3\beta}.
$$
Finally from the size of $\Theta_j$ we readily see that
$$\|\ttt\xi_j\|_{C^1[0,T]} +  \ve\|\alpha_j\|_{C^1[0,T]} \le \ve^{4-2\beta} \ll \ve^{4-3\beta}.
$$
As a conclusion, we get that $\vec p\in \OO$ and the claim has been proven.

\medskip
Standard degree theory applies then to yield that the degree
$\deg ( I -\ttt{ \mathcal F}(\cdot, \la), \OO, 0)$ is well-defined and it is constant in $\la\in [0,1]$. Since $\ttt {\mathcal F}(\cdot, 0)$ is a linear compact operator, this constant is actually non-zero.
Existence of a solution in $\OO$ for $\la=1$ then follows. The proof is concluded. \qed


\bigskip\noindent
{\bf Acknowledgements:}
  We are grateful to Robert L. Jerrard for many useful discussions on the two-dimensional Euler flow, in particular about its linearization around a Liouville profile.
J.~D\'avila and M.~del Pino have been supported by grants Fondecyt  1170224,  1150066 and Fondo Basal CMM, Chile.  M.~Musso has been supported by  Fondecyt grant 1160135.The  research  of J.~Wei is partially supported by NSERC, Canada.

\end{document}